\DeclareMathAlphabet{\mathpzc}{OT1}{pzc}{m}{it}
\colorlet{darkblue}{blue!50!black}
\newtheorem{theorem}{Theorem}[section]
\newtheorem{lemma}[theorem]{Lemma}
\newtheorem{proposition}[theorem]{Proposition}
\newtheorem{definition}[theorem]{Definition}
\newtheorem{problem}[theorem]{Problem}
\newtheorem{remark}[theorem]{Remark}
\newtheorem{algorithm}[theorem]{Algorithm}
\newtheorem{hypothesis}[theorem]{Hypothesis}
\let\originalleft\left
\let\originalright\right
\renewcommand{\left}{\mathopen{}\mathclose\bgroup\originalleft}
\renewcommand{\right}{\aftergroup\egroup\originalright}
\renewcommand{\d}{\/\mathrm{d}\/}
\def\L{\mathbb{L}}
\def\A{\mathcal{A}}
\def\I{\mathrm{I}}
\def\F{\mathcal{F}}
\def\C{\mathcal{C}}
\def\f{\boldsymbol{f}}
\def\B{\mathcal{B}}
\def\D{\mathrm{D}}
\def\y{\boldsymbol{y}}
\def\X{\mathbb{X}}
\def\x{\boldsymbol{x}}
\def\g{\boldsymbol{g}}
\def\z{\boldsymbol{z}}
\def\bv{\boldsymbol{v}}
\def\V{\mathcal{v}}
\def\bw{\boldsymbol{w}}
\def\N{\mathbb{N}}
\def\V{\mathcal{V}}
\def\wi{\widetilde}
\def\bu{\boldsymbol{u}}
\def\H{\mathbb{H}}
\def\n{\boldsymbol{n}}
\newcommand{\eps}{\varepsilon}
\newcommand{\R}{\mathbb{R}}
\renewcommand{\d}{\/\mathrm{d}\/}
\newcommand{\Addresses}{{
		\footnote{
			
			\noindent \textsuperscript{1,2}Department of Mathematics, Indian Institute of Technology Roorkee-IIT Roorkee,
			Haridwar Highway, Roorkee, Uttarakhand 247667, INDIA.\par\nopagebreak
			\noindent  \textit{e-mail:} \texttt{Manil T. Mohan: maniltmohan@ma.iitr.ac.in, maniltmohan@gmail.com.}
			
			\textit{e-mail:} \texttt{Wasim Akram: wakram2k11@gmail.com.}
			
			
			\noindent \textsuperscript{*}Corresponding author.
			
			\textit{Key words:} Stationary convective Brinkman-Forchheimer Extended Darcy equations, hemivariational inequality, pseudomonotonicity, mixed finite element method, error estimates. 
			
			Mathematics Subject Classification (2020): 65N30, 35Q35, 47J20, 65J15, 65N15.

}}}
\begin{document}
	
	\title[Hemivariational inequality for 2D and 3D CBFeD equations]{Mixed Finite Element Method for a Hemivariational Inequality of Stationary convective Brinkman-Forchheimer Extended Darcy equations
		\Addresses}
	\author[W. Akram and M. T. Mohan]
	{Wasim Akram\textsuperscript{2} and Manil T. Mohan\textsuperscript{1*}}
	
	\maketitle

	\begin{abstract}
	This paper presents the formulation and analysis of a mixed finite element method for a hemivariational inequality arising from the stationary convective Brinkman-Forchheimer extended Darcy (CBFeD) equations. This model extends the incompressible Navier-Stokes equations by incorporating both damping and pumping effects. The hemivariational inequality describes the flow of a viscous, incompressible fluid through a saturated porous medium, subject to a nonsmooth, nonconvex friction-type slip boundary condition. The incompressibility constraint is handled via a mixed variational formulation. We establish the existence and uniqueness of solutions by utilizing the pseudomonotonicity and coercivity properties of the underlying operators and provide a detailed error analysis of the proposed numerical scheme. Under suitable regularity assumptions, the method achieves optimal convergence rates with low-order mixed finite element pairs. The scheme is implemented using the 
	$\text{P1b/P1}$ element pair, and numerical experiments are presented to validate the theoretical results and confirm the expected convergence behavior.
	\end{abstract}

	\section{Introduction}\label{sec1}\setcounter{equation}{0}
	\subsection{The model}\label{sub-sec-1}	
	Let $\mathcal{O} \subset \mathbb{R}^d$ ($d = 2,3$) be a bounded, connected, open domain with Lipschitz continuous boundary $\Gamma$. A generic point in $\mathcal{O}$ or on $\Gamma$ is denoted by $\x$. Let $\mathbb{S}^d$ denote the space of symmetric $d \times d$ matrices. On both $\mathbb{R}^d$ and $\mathbb{S}^d$, we use the following standard inner products:
		\begin{align}
		\bu\cdot\bv&=u_iv_i,\ \bu,\bv\in\R^d,\\
		\boldsymbol{\sigma}\cdot\boldsymbol{\tau}&=\sigma_{ij}\tau_{ij},\ \boldsymbol{\sigma},\boldsymbol{\tau} \in\mathbb{S}^d,
	\end{align}
	where we have used the	Einstein summation convention.

We consider the following two- and three-dimensional \emph{convective Brinkman–Forchheimer extended Darcy (CBFeD)} equations, which model the steady flow of an incompressible, viscous fluid through a porous medium:
\begin{equation} \label{eqn-CBF}
	\left\{
\begin{aligned}
	-\mu\Delta\bu + (\bu \cdot \nabla)\bu + \alpha\bu + \beta|\bu|^{r-1}\bu + \kappa|\bu|^{q-1}\bu + \nabla p &= \f, &&\text{in } \mathcal{O}, \\
	\mathrm{div}\,\bu &= 0, &&\text{in } \mathcal{O}.
\end{aligned}
\right. 
\end{equation}
In this system, $\bu : \mathcal{O} \to \mathbb{R}^d$ denotes the velocity field, $p : \mathcal{O} \to \mathbb{R}$ the pressure, and $\f : \mathcal{O} \to \mathbb{R}^d$ the external force. The coefficient $\mu > 0$ represents the Brinkman term, modeling the effective viscosity. The positive constants $\alpha$ and $\beta$ correspond to Darcy and Forchheimer damping effects, accounting for resistance due to permeability and porosity, respectively.
An additional nonlinear term $\kappa|\bu|^{q-1}\bu$ is included to model a potential pumping mechanism, which counteracts the damping when $\kappa < 0$, as assumed throughout this work. The exponent $r \in [1, \infty)$, known as the \emph{absorption exponent}, characterizes the strength of the Forchheimer damping, with $r = 3$ identified as the \emph{critical exponent} (\cite[Proposition 1.1]{KWH}). The parameter $q \in [1, r)$ governs the strength of the pumping effect. The system reduces to the classical Navier-Stokes equations when $\alpha = \beta = \kappa = 0$, and becomes a damped Navier-Stokes system when $\alpha, \beta > 0$ and $\kappa = 0$. The CBFeD model arises from an extended Darcy-Forchheimer law, as detailed in \cite{SGKKMTM, MTT}. The divergence-free condition in the second equation enforces the incompressibility of the fluid flow.

	The CBFeD system given in \eqref{eqn-CBF} is supplemented by appropriate boundary conditions to complete the problem formulation. We assume that the boundary $\Gamma$ of the domain $\mathcal{O}$ is partitioned into two disjoint measurable subsets, $\Gamma_0$ and $\Gamma_1$, each having strictly positive surface measure.	Let $\boldsymbol{n} = (n_1, \dots, n_d)$ denote the outward unit normal vector on $\Gamma$. For any vector field $\boldsymbol{u}$ defined on the boundary, we define its \emph{normal component} as $u_n = \boldsymbol{u} \cdot \boldsymbol{n}$, and its \emph{tangential component} as $\boldsymbol{u}_\tau = \boldsymbol{u} - u_n \boldsymbol{n}$. Given the flow velocity $\boldsymbol{u}$ and pressure $p$, the \emph{rate of deformation tensor} is defined by
	$$
	\boldsymbol{\varepsilon}(\boldsymbol{u}) = \frac{1}{2}(\nabla \boldsymbol{u} +( \nabla \boldsymbol{u})^\top),
	$$
which characterizes the symmetric part of the velocity gradient and describes the strain rate in the fluid,	and the corresponding Cauchy stress tensor is given by
	$$
	\boldsymbol{\sigma} = 2\mu \boldsymbol{\varepsilon}(\boldsymbol{u}) - p \I,
	$$
	where $\I$ denotes the identity matrix. On the boundary $\Gamma$, we define the normal and tangential components of the stress vector $\boldsymbol{\sigma} \boldsymbol{n}$ as:
	$$
	\sigma_n = \boldsymbol{n} \cdot \boldsymbol{\sigma} \boldsymbol{n}, \quad \boldsymbol{\sigma}_\tau = \boldsymbol{\sigma} \boldsymbol{n} - \sigma_n \boldsymbol{n}.
	$$
	The identities
		$$
	\boldsymbol{u} \cdot \boldsymbol{v} = u_n v_n + \boldsymbol{u}_\tau \cdot \boldsymbol{v}_\tau \quad \text{and} \quad (\boldsymbol{\sigma} \boldsymbol{n}) \cdot \boldsymbol{v} = \sigma_n v_n + \boldsymbol{\sigma}_\tau \cdot \boldsymbol{v}_\tau
	$$
		are particularly useful in the derivation of the associated hemivariational inequality.
		The system is supplemented with the following boundary conditions:
	\begin{align}
		\boldsymbol{u} &= \boldsymbol{0} \quad \text{on } \Gamma\_0, \label{eqn-boundary-1} \\
		u_n &= 0, \quad -\boldsymbol{\sigma}*\tau \in \partial j(\boldsymbol{u}*\tau) \quad \text{on } \Gamma_1, \label{eqn-boundary-2}
	\end{align}
	where $j(\boldsymbol{u}_\tau)$ denotes $j(\x, \boldsymbol{u}_\tau)$, and the function $j : \Gamma_1 \times \mathbb{R}^d \to \mathbb{R}$ is known as the \emph{superpotential}. We assume that $j$ is locally Lipschitz continuous in its second argument. The symbol $\partial j$ refers to the \emph{Clarke subdifferential} of the function $j(\x, \cdot)$, interpreted in the sense of Clarke’s generalized gradient.	The condition \eqref{eqn-boundary-2} models a generalized slip boundary condition. Importantly, if $j(\x, \cdot)$ is convex in its second argument, the system \eqref{eqn-CBF}–\eqref{eqn-boundary-2} reduces to a classical \emph{variational inequality}. However, in this work, we do not impose any convexity assumption on $j$, placing the problem within the more general setting of \emph{hemivariational inequalities}, which are well-suited for capturing nonsmooth and nonconvex energy interactions, particularly those arising from nonmonotone friction-type boundary conditions.

	\subsection{Literature review}
	Boundary value problems involving viscous incompressible fluid flows with nonsmooth slip or leak boundary conditions of friction type have garnered considerable attention since the foundational work of Fujita in the early 1990s \cite{HFu, HFu1}. This interest is reflected in an extensive body of literature addressing both the mathematical modeling and analytical aspects of such problems \cite{HFuHK, CLeR, CLeRAT, FSa}, as well as the development of numerical methods for their solution \cite{JKDJK, FJWHWY, FJJLZC, TKa, TKa1, YLRA, YLRA1, YLKL}.	In many of these works, the slip or leak boundary conditions are described by monotone relationships between the relevant physical quantities. This monotonicity enables the formulation of the problem as a variational inequality, often governed by the Stokes equations (\cite{JHRKVS, FJWHYZ, JLFJZC, NSa, FWMLWH}) or the Navier-Stokes equations (\cite{JKD, YLRA2, HQRALM}). However, in cases where the boundary behavior exhibits non-monotonicity, typically due to more complex frictional effects, the resulting weak formulation leads to a broader class of problems known as \emph{hemivariational inequalities} \cite{CFKCWH, CFWH, SMAO, SMi}. These arise from nonsmooth and nonconvex energy functionals, often referred to as \emph{superpotentials}. Within the theoretical framework of hemivariational inequalities, the existence of solutions is commonly established through abstract surjectivity results for \emph{pseudomonotone operators}, as developed in the context of nonlinear functional analysis (see \cite[Section 32.4]{EZ}, \cite[Theorem 2.6]{TRo}).

	The theory of \emph{hemivariational inequalities} was pioneered by Panagiotopoulos in the early 1980s \cite{PDP}, building on the notion of generalized directional derivatives and subdifferentials of locally Lipschitz functionals in the sense of Clarke \cite{FHC, FHC1}. This theoretical framework has since become a powerful tool for modeling nonmonotone, nonsmooth, and set-valued phenomena across various scientific and engineering domains, including contact mechanics and fluid dynamics. Over the past four decades, hemivariational inequalities have proven to be remarkably versatile and effective in capturing complex behaviors in numerous applications. Consequently, the body of mathematical literature devoted to this area has grown substantially and continues to expand rapidly. For comprehensive treatments of the subject, we refer the reader to \cite{SCVKL, JHMMPD, SMAOMS, ZNPDP, PDP1, MSSM}.

The theory, numerical analysis, and applications of hemivariational inequalities have been extensively explored and are well-documented in several authoritative monographs \cite{SCVKL, SMAOMS, DMPDP, ZNPDP, PDP1, MSSM}, along with an expanding corpus of journal publications. A detailed study of the finite element method (FEM) for hemivariational inequalities, including convergence analysis and solution algorithms, is presented in the monograph \cite{JHMMPD}. In recent years, significant progress has been made in deriving optimal-order error estimates for FEM applied to various classes of hemivariational inequalities. These include elliptic problems \cite{WHSM1, WHMS1, WHMS2}, evolutionary formulations \cite{MBKB, WHCW}, and history-dependent models \cite{WXZH}; see also the survey article \cite{WHMS} for a broader overview. Beyond FEM, alternative numerical approaches have been developed. Notably, the virtual element method (VEM) has been successfully applied to hemivariational inequalities arising in contact mechanics \cite{FFWH}. A nonconforming VEM was proposed and analyzed in \cite{MLFW} for solving stationary Stokes hemivariational inequalities subject to slip boundary conditions. These developments highlight the robustness and adaptability of the hemivariational inequality framework in addressing a wide spectrum of challenging nonlinear problems.

The Navier-Stokes equations, fundamental to the mathematical modeling of fluid dynamics, have been extensively studied across theory, numerical methods, and practical applications \cite{VGPR, Te, Te1}. Given the lack of explicit analytical solutions for hemivariational inequalities, numerical techniques play a crucial role in approximating their solutions. An early contribution in this direction is the development of a mixed finite element method for the Stokes hemivariational inequality with slip boundary conditions, as presented in \cite{CFKCWH}, which provided a foundation for further exploration of more complex flow models. Building on this, the study \cite{WHKCFJ} extended the analysis to the stationary Navier-Stokes hemivariational inequality (NS-HVI), offering a more general and physically realistic framework. Further theoretical investigations into stationary NS-HVIs were conducted in \cite{JMSS}, while associated optimal control problems were addressed in \cite{SMi1}. A range of theoretical and numerical results for variational-hemivariational inequalities arising from different fluid dynamic models have also been established in recent works, including \cite{JCVTN, WHFJYY, FJWHGZ, MLWH1, MLWHSZ, XTTC, QXXCKL}.

Damping effects are essential for accurately modeling fluid flows subject to resistance. The Stokes and Navier-Stokes equations augmented with damping terms are widely employed to represent various physical mechanisms such as drag forces, viscous dissipation, and energy loss in fluid systems (\cite{KWH, CFKCWH, SGMTM, SGKKMTM, MTT}). These extended models provide a more realistic description of flows through porous or resistive media. Analytical and numerical investigations of Stokes variational inequalities incorporating damping have been carried out in works such as \cite{HQLM}. A variational formulation for the Stokes hemivariational inequality with damping was proposed in \cite{WHHQLM} (see also \cite{MLWH} for a general treatment of the Stokes hemivariational inequality), where mixed finite element methods were developed along with corresponding error estimates. To the best of our knowledge, the theoretical and numerical treatment of Navier-Stokes hemivariational inequalities for viscous incompressible fluids subject to damping has not yet been explored in the existing literature. This work aims to bridge that gap by formulating and analyzing a class of Navier-Stokes hemivariational inequalities that incorporate both damping and pumping effects, thereby advancing the current understanding of such nonlinear fluid models.

	\subsection{Our contribution}
This paper develops and analyzes a mixed finite element method for solving a hemivariational inequality arising from the stationary convective Brinkman-Forchheimer extended Darcy (CBFeD) equations. This system models complex fluid flow in saturated porous media by incorporating physical mechanisms beyond those captured by the classical Navier-Stokes framework. In particular, the CBFeD model extends the incompressible Navier-Stokes equations through the inclusion of:
\begin{itemize}
	\item a \emph{damping term} representing inertial resistance in porous media, and
	\item a \emph{pumping term} that enhances momentum transport, akin to viscous effects.
\end{itemize}
These extensions render the CBFeD model particularly suitable for applications such as groundwater movement, filtration processes, and biological fluid transport.

The boundary of the domain is subject to \emph{slip-type conditions} governed by a nonsmooth and potentially nonconvex friction law, which allows fluid to slip along the boundary rather than adhere to it. This naturally leads to a hemivariational inequality formulation, where the boundary behavior is captured by a nonsmooth, nonconvex energy functional.

To accommodate the incompressibility constraint (i.e., divergence-free velocity field), the problem is posed in a \emph{mixed variational framework}, where both velocity and pressure are simultaneously approximated within appropriate function spaces. The mixed finite element method is particularly well-suited to such saddle-point problems. Leveraging the \emph{inf-sup condition}, a rigorous mathematical analysis of the proposed scheme is performed. The existence and uniqueness of weak solutions rely crucially on the \emph{pseudomonotonicity} and \emph{coercivity} of the associated nonlinear operators, along with an abstract \emph{surjectivity theorem} from \cite[Theorem 1.3.70]{ZdSmP}. We emphasize the following scope of the analysis:
\begin{itemize}
	\item Problem \ref{prob-hemi-2} is addressed for spatial dimensions $d \in \{2,3\}$ and for the full range $r \in [1, \infty)$;
	\item Problem \ref{prob-hemi-1} is solved for $d = 2$ and $r \in [1, \infty)$, and for $d = 3$ with $r \in [1,5]$, the latter due to the reliance on the inf-sup condition in the existence proof.
\end{itemize}
Assuming standard regularity of the exact solution, \emph{optimal-order error estimates} are derived for the proposed numerical method. The implementation utilizes the classical $\text{P1b/P1}$ finite element pair, where the velocity field is approximated using enriched piecewise linear functions (with bubble functions), and the pressure is approximated using standard piecewise linear elements. In summary, this work advances the theoretical and computational treatment of incompressible fluid flows governed by hemivariational inequalities. It extends previous analyses from the Navier-Stokes regime to the CBFeD setting and provides new insights into the numerical analysis of nonsmooth, nonmonotone boundary conditions in viscous flow models.

	\subsection{Structure of the paper} 
The remainder of the paper is structured as follows. In the next section, we introduce the necessary preliminaries and establish the functional framework relevant to the model described in Subsection \ref{sub-sec-1}. We also formulate the hemivariational inequality associated with the system defined by equations \eqref{eqn-CBF}-\eqref{eqn-boundary-2}, corresponding to Problems \ref{prob-hemi-1} and \ref{prob-hemi-2}.
		In Section \ref{sec3}, we analyze Problem \ref{prob-hemi-2}, proving the existence of weak solutions for spatial dimensions $d \in \{2,3\}$ and parameter range $r \in [1, \infty)$ (see Theorem \ref{thm-main-hemi}). This existence result is derived using the coercivity and pseudomonotonicity of the underlying operators, together with an abstract surjectivity theorem from \cite[Theorem 1.3.70]{ZdSmP}. Furthermore, Proposition \ref{prop-energy-est} establishes the boundedness of all solutions to Problem \ref{prob-hemi-2}, and Theorem \ref{thm-unique} addresses the uniqueness of weak solutions. To handle Problem \ref{prob-hemi-1}, we apply the inf-sup condition (also known as the Ladyzhenskaya-Babuška-Brezzi or LBB condition) and prove existence results in Theorem \ref{thm-equivalent} for $d = 2$ with $r \in [1, \infty)$ and $d = 3$ with $r \in [1,5]$.	Section \ref{sec4} is devoted to the numerical analysis of the CBFeD hemivariational inequality using a mixed finite element method. We first derive a version of Céa’s lemma to obtain error bounds for the finite element approximation (Theorem \ref{thm-cea}), followed by optimal-order error estimates for the $\text{P1b/P1}$ finite element pair under suitable regularity assumptions (Theorem \ref{thm-optimal}). 	Finally, the last section presents an iterative algorithm for numerically solving Problem \ref{prob-hemi-1}, along with computational results that illustrate the accuracy and effectiveness of the proposed method.

	\section{Mathematical Formulation}\label{sec2}\setcounter{equation}{0}
The primary aim of this section is to present the key mathematical preliminaries required for the theoretical analysis developed in this work. In addition, we establish the functional framework corresponding to the model introduced in Section \ref{sec1}. Finally, we formulate the hemivariational inequality associated with the system defined by equations \eqref{eqn-CBF}-\eqref{eqn-boundary-2}.

	\subsection{Preliminaries}
	All function spaces considered in this work are defined over the field of real numbers. Let $\mathbb{X}$ be a normed space with norm denoted by $\|\cdot\|_{\mathbb{X}}$. Its topological dual is denoted by $\mathbb{X}^*$, and the duality pairing between $\mathbb{X}^*$ and $\mathbb{X}$ is written as ${}_{\mathbb{X}^*}\langle \cdot, \cdot \rangle_{\mathbb{X}}$. The notation $\mathbb{X}_w$ refers to the space $\mathbb{X}$ endowed with the weak topology. We use $2^{\mathbb{X}^*}$ to denote the collection of all subsets of $\mathbb{X}^*$. Unless otherwise stated, we assume throughout that $\mathbb{X}$ is a Banach space.

	We start with the definition of a locally Lipschitz function.
	\begin{definition}
	A function $\psi: \mathbb{X} \to \mathbb{R}$ is said to be \emph{locally Lipschitz} if, for every point $\x \in \mathbb{X}$, there exists a neighborhood $U \subset \mathbb{X}$ of $\x$ and a constant $L_U > 0$ such that
		$$
	|\psi(\y) - \psi(\z)| \leq L_U \|\y - \z\|_{\mathbb{X}} \  \text{ for all }\  \y, \z \in U.
	$$

	\end{definition}
	
	We now recall the definitions of the \emph{generalized directional derivative} and the \emph{generalized gradient} (in the sense of Clarke) for a locally Lipschitz function.
	
	\begin{definition}[{\cite[Definition  5.6.3]{ZdSm1}}]
		Let $\psi : \X \to \R$ be a locally Lipschitz function. The \emph{generalized directional derivative} of $\psi$ at a point $\x \in \X$ in the direction $\bv \in \X$, denoted by $\psi^0(\x;\bv)$, is defined as
		$$
		\psi^0(\x;\bv) = \lim_{\y \to \x} \sup_{\lambda \downarrow 0} \frac{\psi(\y + \lambda \bv) - \psi(\y)}{\lambda}.
		$$
		The \emph{generalized gradient} (or \emph{Clarke subdifferential}) of $\psi$ at $\x$, denoted by $\partial \psi(\x)$, is the subset of the dual space $\X^{*}$ given by
			$$
		\partial \psi(\x) = \left\{ \zeta \in \X^{*} : \psi^0(\x;\bv) \geq {}_{\X^{*}}\langle \zeta, \bv \rangle_{\X} \ \text{ for all } \bv \in \X \right\}.
		$$
			A locally Lipschitz function $\psi$ is said to be \emph{regular (in the sense of Clarke)} at a point $\x \in \X$ if the one-sided directional derivative $\psi^{\prime}(\x;\bv)$ exists for all $\bv \in \X$, and satisfies $\psi^0(\x;\bv) = \psi^{\prime}(\x;\bv)$.
			\end{definition}

	The following results will be utilized in the subsequent analysis.

	\begin{proposition}[{\cite[Proposition 2.1.2]{FHC}, \cite[Proposition 5.6.9]{ZdSm1}}]\label{prop-clarke}
		If  $\psi:\X\to\R$ is locally Lipschitz, then 
		\begin{enumerate}
			\item $\partial \psi(\x)$ is a nonempty, convex, weak$^*$-compact subset of $\X^{*}$ and $\|\boldsymbol{\zeta}\|_{\X^{*}}\leq L_U$ for every $\boldsymbol{\zeta}\in\partial \psi(\x)$;
			\item for every $\bv,\x\in\X$, we have 
			$$	\psi^0(\x;\bv)=\max\left\{\langle\boldsymbol{\zeta},\bv\rangle:\boldsymbol{\zeta}\in\partial \psi(\x)\right\}.$$
		\end{enumerate}
	\end{proposition}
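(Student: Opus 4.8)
The plan is to deduce everything from three structural properties of the generalized directional derivative $\bv \mapsto \psi^0(\x;\bv)$, and then to use the Hahn--Banach theorem to pass between this sublinear functional and the set of continuous linear functionals it dominates, which is exactly $\partial\psi(\x)$.

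First I would record the basic properties of $\psi^0(\x;\cdot)$. Since $\psi$ is locally Lipschitz, there is a neighbourhood $U$ of $\x$ and a constant $L_U > 0$ realizing the Lipschitz bound. For $\y$ near $\x$ and $\lambda$ small, $|\psi(\y + \lambda\bv) - \psi(\y)| \leq L_U \lambda \|\bv\|_{\X}$, so each difference quotient is bounded in modulus by $L_U\|\bv\|_{\X}$; passing to the $\limsup$ shows $\psi^0(\x;\bv)$ is finite with $|\psi^0(\x;\bv)| \leq L_U \|\bv\|_{\X}$. Positive homogeneity, $\psi^0(\x;t\bv) = t\,\psi^0(\x;\bv)$ for $t \geq 0$, follows by a change of variable in the quotient. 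The crucial property is subadditivity: to prove $\psi^0(\x;\bv+\bw) \leq \psi^0(\x;\bv) + \psi^0(\x;\bw)$, I would split the increment as $[\psi(\y + \lambda\bv + \lambda\bw) - \psi(\y + \lambda\bw)] + [\psi(\y + \lambda\bw) - \psi(\y)]$, use subadditivity of $\limsup$, and note that $\y + \lambda\bw \to \x$ as $(\y,\lambda) \to (\x, 0^+)$, so the first piece has $\limsup$ at most $\psi^0(\x;\bv)$. Together these make $\psi^0(\x;\cdot)$ a finite sublinear --- hence convex and continuous --- functional on $\X$.

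Next I would establish part (1). Boundedness is immediate: any $\zeta \in \partial\psi(\x)$ satisfies $\langle \zeta, \bv\rangle \leq \psi^0(\x;\bv) \leq L_U \|\bv\|_{\X}$ for every $\bv$, and replacing $\bv$ by $-\bv$ gives $\|\zeta\|_{\X^*} \leq L_U$. Convexity holds because the defining inequality $\langle \zeta, \bv\rangle \leq \psi^0(\x;\bv)$ is stable under convex combinations of $\zeta$. Weak$^*$-closedness follows since each $\bv$ contributes a weak$^*$-closed half-space and $\partial\psi(\x)$ is their intersection; together with the uniform norm bound, the Banach--Alaoglu theorem yields weak$^*$-compactness. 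Nonemptiness I would obtain from Hahn--Banach, which guarantees that the sublinear functional $\psi^0(\x;\cdot)$ dominates at least one continuous linear functional, necessarily a member of $\partial\psi(\x)$.

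Finally, for the max formula in part (2), the bound $\sup\{\langle\zeta,\bv\rangle : \zeta \in \partial\psi(\x)\} \leq \psi^0(\x;\bv)$ is again immediate from the definition of $\partial\psi(\x)$. For the reverse inequality and the attainment of the supremum, I would fix $\bv$, define a linear functional on the line $\R\bv$ by $t\bv \mapsto t\,\psi^0(\x;\bv)$ (which is dominated by $\psi^0(\x;\cdot)$ on $\R\bv$, using positive homogeneity for $t\geq 0$ and subadditivity for $t<0$), and extend it by Hahn--Banach to some $\zeta_0 \in \X^{*}$ with $\langle\zeta_0, \cdot\rangle \leq \psi^0(\x;\cdot)$ on all of $\X$. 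Then $\zeta_0 \in \partial\psi(\x)$ and $\langle \zeta_0, \bv\rangle = \psi^0(\x;\bv)$, so the supremum is attained and equals $\psi^0(\x;\bv)$, which is therefore a maximum. The main obstacle is the careful verification of subadditivity of $\psi^0(\x;\cdot)$, the delicate point being the nested $\limsup$ over $(\y,\lambda)$ in the splitting above; once sublinearity is secured, both conclusions are standard consequences of Hahn--Banach and Banach--Alaoglu.
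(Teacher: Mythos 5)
Your argument is correct. The paper does not prove this proposition at all — it is quoted verbatim from the cited sources (Clarke's \emph{Optimization and Nonsmooth Analysis}, Prop.~2.1.1--2.1.2, and Denkowski--Mig\'orski--Papageorgiou), and your proof is essentially the classical one given there: establish that $\bv \mapsto \psi^0(\x;\bv)$ is finite, positively homogeneous, and subadditive (hence sublinear with $|\psi^0(\x;\bv)| \leq L_U\|\bv\|_{\X}$), then obtain nonemptiness and the max formula from the analytic Hahn--Banach theorem and weak$^*$-compactness from Banach--Alaoglu. All the individual steps check out, including the two points that are easy to get wrong: the nested $\limsup$ in the subadditivity argument (valid because $\y+\lambda\bw\to\x$ keeps the inner quotient within the family defining $\psi^0(\x;\bv)$) and the domination of $t\mapsto t\,\psi^0(\x;\bv)$ for $t<0$, which indeed follows from $0=\psi^0(\x;\boldsymbol{0})\leq\psi^0(\x;t\bv)+\psi^0(\x;-t\bv)$.
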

	Expression (2) provides a characterization of the generalized directional derivative in terms of the Clarke subdifferential $\partial \psi(\x)$. We now present an important property, the sub-additivity of the generalized directional derivative—which will be employed in the subsequent analysis:
	\begin{align}\label{eqn-subaddtive}
\psi^0(\x; \bv_1 + \bv_2) \leq \psi^0(\x; \bv_1) + \psi^0(\x; \bv_2) \ \text{ for all } \ \x, \bv_1, \bv_2 \in \X.
\end{align}
		Additional properties and in-depth discussions of the generalized directional derivative and the Clarke subdifferential can be found in \cite{FHC,SMAOMS}.

	\begin{proposition}[{\cite[Proposition 5.6.10]{ZdSm1}}]\label{prop-ups}
	If $\psi: \X \to \mathbb{R}$ is a locally Lipschitz function, then the set-valued mapping $\x \mapsto \partial \psi(\x)$ is upper semicontinuous from $\X$ into $\X^*$.
	\end{proposition}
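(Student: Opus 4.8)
The plan is to establish upper semicontinuity of the set-valued map $\x \mapsto \partial\psi(\x)$ with $\X^*$ carrying the weak$^*$ topology, by leveraging the joint upper semicontinuity of the generalized directional derivative $\psi^0(\cdot\,;\cdot)$ together with the local boundedness of the subdifferential recorded in Proposition \ref{prop-clarke}(1). Recall that $\partial\psi$ is upper semicontinuous at $\x_0$ precisely when, for every weak$^*$-open set $V \subseteq \X^*$ with $\partial\psi(\x_0) \subseteq V$, there exists a norm neighborhood $U$ of $\x_0$ such that $\partial\psi(\x) \subseteq V$ for all $\x \in U$.

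First I would prove the auxiliary fact that $(\x,\bv) \mapsto \psi^0(\x;\bv)$ is upper semicontinuous on $\X \times \X$. Given $\x_n \to \x_0$ and $\bv_n \to \bv_0$ in norm, the definition of $\psi^0$ as a limit superior furnishes, for each $n$, a point $\y_n$ with $\|\y_n - \x_n\|_{\X} < 1/n$ and a scalar $\lambda_n \in (0,1/n)$ such that
\[
\psi^0(\x_n;\bv_n) \leq \frac{\psi(\y_n + \lambda_n \bv_n) - \psi(\y_n)}{\lambda_n} + \frac{1}{n}.
\]
Since $\y_n \to \x_0$ and $\lambda_n \downarrow 0$, for large $n$ all the relevant points lie in a neighborhood of $\x_0$ on which $\psi$ is Lipschitz with some constant $L$; replacing $\bv_n$ by $\bv_0$ then costs at most $L\|\bv_n - \bv_0\|_{\X} \to 0$. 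Passing to the limit superior and invoking the definition of $\psi^0(\x_0;\bv_0)$ gives $\limsup_n \psi^0(\x_n;\bv_n) \leq \psi^0(\x_0;\bv_0)$.

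Next I would argue the main claim by contradiction. If upper semicontinuity failed at $\x_0$, there would exist a weak$^*$-open set $V \supseteq \partial\psi(\x_0)$, a net $\x_n \to \x_0$ in norm, and elements $\zeta_n \in \partial\psi(\x_n) \setminus V$. By Proposition \ref{prop-clarke}(1) the $\zeta_n$ are bounded in $\X^*$ by a common local Lipschitz constant, so by the Banach--Alaoglu theorem some subnet $\zeta_{n_k}$ converges weak$^*$ to a limit $\zeta \in \X^*$. For each fixed $\bv \in \X$ the defining inequality $\langle \zeta_{n_k}, \bv\rangle \leq \psi^0(\x_{n_k};\bv)$ passes to the limit via the joint upper semicontinuity established above (second argument frozen at $\bv$), yielding $\langle \zeta, \bv\rangle \leq \psi^0(\x_0;\bv)$ for every $\bv$, hence $\zeta \in \partial\psi(\x_0) \subseteq V$. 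On the other hand, $\X^* \setminus V$ is weak$^*$-closed and contains each $\zeta_{n_k}$, so it contains the weak$^*$-limit $\zeta$, contradicting $\zeta \in V$.

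The main obstacle is the mismatch between the norm topology on $\X$, where the base points converge, and the weak$^*$ topology on $\X^*$, where the subdifferential values reside; this is resolved by the local boundedness from Proposition \ref{prop-clarke}(1), which confines the $\zeta_n$ to a weak$^*$-compact ball and thereby licenses the extraction of a weak$^*$-convergent subnet. A secondary point is the choice between sequences and nets: when $\X$ is separable (as in the Sobolev-space setting of this paper) bounded subsets of $\X^*$ are weak$^*$-metrizable and sequences suffice, but the net formulation preserves validity in full generality.
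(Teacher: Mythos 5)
The paper does not prove this proposition; it is quoted verbatim from the cited reference (Denkowski--Mig\'orski--Papageorgiou, Proposition 5.6.10), so there is no in-paper argument to compare against. Your proof is correct and is essentially the standard one found in that source and in Clarke's book: upper semicontinuity of $(\x,\bv)\mapsto\psi^0(\x;\bv)$, local boundedness of $\partial\psi$ via the Lipschitz constant, Banach--Alaoglu to extract a weak$^*$-convergent subnet, and the weak$^*$-closedness of the graph to land the limit in $\partial\psi(\x_0)$. You are also right to read the target topology on $\X^*$ as the weak$^*$ topology and to flag that sequences suffice only under separability; both points are implicit in the cited statement and your handling of them is sound.
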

	We subsequently review the notion of pseudomonotonicity in the context of single-valued operators.
	\begin{definition}[]\label{def-pseudo}
		A single-valued operator $\mathcal{G} : \X\to\X^{*}$  is said to be \emph{pseudomonotone}, if
		\begin{enumerate}
			\item $\mathcal{G}$ is bounded (that is, it maps bounded subsets of $\X$ into bounded subsets of $\X^*$);
			\item $\bu_n\rightharpoonup \bu$ in $\X$ and $\limsup\limits_{n\to\infty} {}_{\X^{*}}\langle\mathcal{G}(\bu_n),\bu_n-\bu\rangle_{\X}\leq 0$ imply 
			\begin{align*}
				{}_{\X^{*}}\langle\mathcal{G}(\bu),\bu-\bv\rangle_{\X}\leq\liminf_{n\to\infty}{}_{\X^{*}}\langle\mathcal{G}(\bu_n),\bu_n-\bv\rangle_{\X}\ \text{ for all }\ \bv\in\X.
			\end{align*}
		\end{enumerate}
	\end{definition}
	It is proved in \cite[Remark 2]{SMAO} that an operator $\mathcal{G} : \X\to\X^{*}$ is pseudomonotone if and only if it is bounded and $\bu_n\rightharpoonup \bu$ in $\X$ together with $\limsup\limits_{n\to\infty} {}_{\X^{*}}\langle\mathcal{G}(\bu_n),\bu_n-\bu\rangle_{\X}\leq 0$ imply 
	\begin{align}\label{eqn-con-pseudo}
		\mathcal{G}(\bu_n)\rightharpoonup \mathcal{G}(\bu)\ \text{ in }\ \X^{*}\ \text{ and }\ \lim_{n\to\infty}{}_{\X^{*}}\langle\mathcal{G}(\bu_n),\bu_n-\bu\rangle_{\X}=0. 
	\end{align}
	The following definition appears, for instance, in  \cite[Definition 1]{FBPH} or \cite[Definition 3.57]{SMAOMS}. 
	\begin{definition}
		Let $\X$ be a reflexive Banach space. A multi-valued operator $\mathcal{G} : \X\to 2^{\X^{*}}$ is \emph{pseudomonotone} if the following conditions hold:
		\begin{enumerate}
			\item $\mathcal{G}$ has values which are nonempty, bounded, closed and convex;
			\item $\mathcal{G}$  is upper semicontinuous from each finite dimensional subspace of $\X$ into $\X^{*}_w$;
			\item For sequences $\{\bu_n\}\subset\X$ and $\{\bu_n^*\}\subset\X^{*}$ such that $\bu_n\rightharpoonup \bu$ in $\X$, $\bu_n^*\in\mathcal{G}(\bu_n)$ and $\limsup\limits_{n\to\infty}{}_{\X^{*}}\langle\bu_n^*,\bu_n-\bu\rangle_{\X}\leq 0$, we have that for every $\bv\in\X$, there exists $\bu^*(\bv) \in\mathcal{G}(\bu)$  such that
			\begin{align*}
				{}_{\X^{*}}\langle\bu^*(\bv),\bu-\bv\rangle_{\X}\leq\liminf_{n\to\infty}{}_{\X^{*}}\langle\bu_n^*(\bv),\bu_n-\bv\rangle_{\X}.
			\end{align*}
		\end{enumerate}
	\end{definition}
	The following well-known result provides a sufficient condition for establishing the pseudomonotonicity of an operator.
	\begin{proposition}[{\cite[Proposition 1.3.66]{ZdSmP}}]\label{prop-suff-pseudo}
		Let $\X$ be a real reflexive Banach space, and assume that $\mathcal{G} : \X \to 2^{\X^{*}}$  satisfies the following conditions:
		\begin{enumerate}
			\item for each $v \in \X$, $\mathcal{G}(\bv)$  is a nonempty, closed, and convex subset of $\X^*$;
			\item $\mathcal{G}$  is bounded;
			\item if $\bv_n\rightharpoonup \bv$ in $\X$, $\bv_n^*\rightharpoonup \bv^*$ in $\X^{*}$ with $\bv_n^*\in\mathcal{G}(\bv_n)$, and $\limsup\limits_{n\to\infty}{}_{\X^{*}}\langle\bv_n^*,\bv_n-\bv\rangle_{\X}\leq 0$, then $\bv^*\in\mathcal{G}(\bv)$ and ${}_{\X^{*}}\langle\bv_n^*,\bv_n\rangle_{\X}\to {}_{\X^{*}}\langle\bv^*,\bv\rangle_{\X}$.
		\end{enumerate}
		Then the operator $\mathcal{G}$ is pseudomonotone.
	\end{proposition}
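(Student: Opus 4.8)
The plan is to verify directly the three defining properties of a (multi-valued) pseudomonotone operator for $\mathcal{G}$, noting first that $\X^*$ is reflexive because $\X$ is. Property (1), that each value $\mathcal{G}(\bv)$ is nonempty, bounded, closed and convex, is immediate: nonemptiness, closedness and convexity are exactly hypothesis (1), while boundedness of the values follows from the boundedness hypothesis (2). The substance of the proof lies in establishing property (2), the upper semicontinuity of $\mathcal{G}$ restricted to each finite-dimensional subspace $F\subset\X$ as a map into $\X^{*}_w$, and property (3), the generalized pseudomonotonicity inequality; both will be reduced to hypothesis (3) by a subsequence-extraction argument exploiting reflexivity.

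For the upper semicontinuity I would argue by contradiction. Fix a finite-dimensional subspace $F$ and a point $\bv_0\in F$, and suppose $\mathcal{G}|_F$ fails to be upper semicontinuous at $\bv_0$ into $\X^{*}_w$. Then there is a weakly open set $U\supseteq\mathcal{G}(\bv_0)$ and, since $F$ is metrizable, a sequence $\bv_n\to\bv_0$ in $F$ together with $\bv_n^*\in\mathcal{G}(\bv_n)\setminus U$. By the boundedness of $\mathcal{G}$ the sequence $\{\bv_n^*\}$ is bounded in $\X^*$, and reflexivity yields a subsequence with $\bv_n^*\rightharpoonup\bv^*$ in $\X^*$; as $\X^*\setminus U$ is weakly closed, $\bv^*\notin\mathcal{G}(\bv_0)$. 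On the other hand, $\bv_n\to\bv_0$ strongly in $\X$ and $\{\bv_n^*\}$ is bounded, so ${}_{\X^*}\langle\bv_n^*,\bv_n-\bv_0\rangle_{\X}\to0$, whence $\limsup_n {}_{\X^*}\langle\bv_n^*,\bv_n-\bv_0\rangle_{\X}\le0$. Hypothesis (3) then forces $\bv^*\in\mathcal{G}(\bv_0)$, a contradiction. This is the step I expect to be the main obstacle, precisely because it requires handling the weak topology on $\X^*$ correctly; the key point is that boundedness of $\mathcal{G}$ together with reflexivity lets me replace nets by weakly convergent subsequences (Eberlein–Šmulian), after which hypothesis (3) supplies exactly the closed-graph property needed to contradict the failure of upper semicontinuity.

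Finally, for property (3), let $\bv_n\rightharpoonup\bv$ in $\X$ with $\bv_n^*\in\mathcal{G}(\bv_n)$ and $\limsup_n{}_{\X^*}\langle\bv_n^*,\bv_n-\bv\rangle_{\X}\le0$, and fix $\bw\in\X$. Choosing a subsequence along which ${}_{\X^*}\langle\bv_n^*,\bv_n-\bw\rangle_{\X}$ converges to $\liminf_n{}_{\X^*}\langle\bv_n^*,\bv_n-\bw\rangle_{\X}$ and, using boundedness and reflexivity, a further subsequence with $\bv_n^*\rightharpoonup\bv^*$, I note that the $\limsup$ condition is inherited by subsequences, so hypothesis (3) applies and gives $\bv^*\in\mathcal{G}(\bv)$ together with ${}_{\X^*}\langle\bv_n^*,\bv_n\rangle_{\X}\to{}_{\X^*}\langle\bv^*,\bv\rangle_{\X}$. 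Combining this with ${}_{\X^*}\langle\bv_n^*,\bw\rangle_{\X}\to{}_{\X^*}\langle\bv^*,\bw\rangle_{\X}$ (from $\bv_n^*\rightharpoonup\bv^*$) yields
\[
\liminf_n{}_{\X^*}\langle\bv_n^*,\bv_n-\bw\rangle_{\X}={}_{\X^*}\langle\bv^*,\bv\rangle_{\X}-{}_{\X^*}\langle\bv^*,\bw\rangle_{\X}={}_{\X^*}\langle\bv^*,\bv-\bw\rangle_{\X},
\]
where the leftmost term is the genuine full-sequence $\liminf$ since the first subsequence was chosen to realize it. Taking $\bv^*(\bw):=\bv^*\in\mathcal{G}(\bv)$ gives the desired inequality (indeed with equality), completing the verification of property (3) and hence the pseudomonotonicity of $\mathcal{G}$.
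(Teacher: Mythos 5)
Your argument is correct, but note that the paper itself offers no proof of this statement: it is quoted verbatim from \cite[Proposition 1.3.66]{ZdSmP} and used as a black box, so there is no internal proof to compare against. What you have written is a complete, self-contained derivation of that classical result, and each step checks out. For condition (1) of the definition of pseudomonotonicity, you correctly observe that boundedness of each value $\mathcal{G}(\bv)$ follows from hypothesis (2) applied to the singleton $\{\bv\}$. For condition (2), the contradiction argument is sound: metrizability of the finite-dimensional subspace lets you extract a sequence $\bv_n\to\bv_0$ with $\bv_n^*\in\mathcal{G}(\bv_n)\setminus U$; boundedness of $\mathcal{G}$ plus reflexivity of $\X^*$ gives a weakly convergent subsequence; strong convergence of $\bv_n$ and boundedness of $\{\bv_n^*\}$ make the $\limsup$ condition trivially satisfied; and the weak closedness of $\X^*\setminus U$ clashes with the conclusion $\bv^*\in\mathcal{G}(\bv_0)$ forced by hypothesis (3). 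For condition (3), the double subsequence extraction (first realizing the $\liminf$, then securing a weak limit $\bv^*$), together with the observation that the $\limsup$ hypothesis passes to subsequences, yields $\langle\bv^*,\bv-\bw\rangle=\liminf_n\langle\bv_n^*,\bv_n-\bw\rangle$ with $\bv^*\in\mathcal{G}(\bv)$, which is exactly (indeed slightly more than) what the definition demands; the dependence of $\bv^*$ on $\bw$ is permitted by the definition. The only cosmetic caveat is that you should make explicit, as you implicitly do, that $\{\bv_n\}$ is norm-bounded whenever it converges weakly (uniform boundedness), so that hypothesis (2) really does bound $\{\bv_n^*\}$; with that said, the proof stands as a valid replacement for the citation.
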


	\begin{proposition}[{\cite[Proposition 1.3.68]{ZdSmP}}]
		If $\X$ is a reflexive Banach space and $\mathcal{G}_1,\mathcal{G}_2:\X\to 2^{\X^{*}}$ are pseudomonotone operators, then $\mathcal{G}_1+\mathcal{G}_2$ is also pseudomonotone. 
	\end{proposition}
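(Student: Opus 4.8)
The plan is to set $\mathcal{G} := \mathcal{G}_1 + \mathcal{G}_2$, interpreted as the Minkowski sum $\mathcal{G}(\bu) = \{\bu_1^* + \bu_2^* : \bu_1^* \in \mathcal{G}_1(\bu),\ \bu_2^* \in \mathcal{G}_2(\bu)\}$, and to verify the three defining conditions of a pseudomonotone multi-valued operator one at a time. Conditions (1) and (2) are structural consequences of the corresponding properties of the summands. For (1), nonemptiness, boundedness, and convexity of $\mathcal{G}(\bu)$ are immediate, since algebraic sums preserve each of these properties; the only delicate point is closedness, because the sum of two closed sets need not be closed. Here I would exploit the reflexivity of $\X$: a nonempty, bounded, closed, convex subset of a reflexive Banach space is weakly compact (by Kakutani's theorem). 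As addition is continuous from $\X^* \times \X^*$ into $\X^*$ for the weak topologies, $\mathcal{G}(\bu)$ is the image of the weakly compact product $\mathcal{G}_1(\bu) \times \mathcal{G}_2(\bu)$ under addition, hence weakly compact, therefore weakly closed, and being convex it is norm-closed. For condition (2), I would invoke the standard fact that the sum of two set-valued maps which are upper semicontinuous with weakly compact values into $\X^*_w$ is again upper semicontinuous; restricting to any finite-dimensional subspace then gives the required continuity for $\mathcal{G}$.

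The heart of the matter is condition (3), which I expect to be the main obstacle. Let $\bu_n \rightharpoonup \bu$ in $\X$, let $\bu_n^* \in \mathcal{G}(\bu_n)$ with a decomposition $\bu_n^* = \bu_{1,n}^* + \bu_{2,n}^*$ where $\bu_{i,n}^* \in \mathcal{G}_i(\bu_n)$, and assume $\limsup_{n\to\infty} {}_{\X^*}\langle \bu_n^*, \bu_n - \bu\rangle_{\X} \le 0$. The crucial step is to show that this limsup splits across the two summands, namely
\[
\limsup_{n\to\infty} {}_{\X^*}\langle \bu_{1,n}^*, \bu_n - \bu\rangle_{\X} \le 0 \quad \text{and} \quad \limsup_{n\to\infty} {}_{\X^*}\langle \bu_{2,n}^*, \bu_n - \bu\rangle_{\X} \le 0.
\]

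I would establish the splitting by contradiction. Suppose the first limsup is strictly positive. Passing to a subsequence along which ${}_{\X^*}\langle \bu_{1,n}^*, \bu_n - \bu\rangle_{\X}$ converges to a positive limit, the hypothesis $\limsup_n {}_{\X^*}\langle \bu_n^*, \bu_n - \bu\rangle_{\X} \le 0$ forces ${}_{\X^*}\langle \bu_{2,n}^*, \bu_n - \bu\rangle_{\X}$ to have strictly negative limsup along that subsequence; after a further subsequence it converges to some $\ell < 0$. In particular the pseudomonotonicity of $\mathcal{G}_2$ applies (with test element $\bv = \bu$) and yields some $\bu_2^* \in \mathcal{G}_2(\bu)$ with $0 = {}_{\X^*}\langle \bu_2^*, \bu - \bu\rangle_{\X} \le \liminf_n {}_{\X^*}\langle \bu_{2,n}^*, \bu_n - \bu\rangle_{\X} = \ell < 0$, a contradiction. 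By symmetry, both limsups are nonpositive.

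Once the splitting is in hand, the conclusion follows by applying the pseudomonotonicity of each $\mathcal{G}_i$ individually. Fix $\bv \in \X$. Since $\limsup_n {}_{\X^*}\langle \bu_{i,n}^*, \bu_n - \bu\rangle_{\X} \le 0$, there exist $\bu_i^*(\bv) \in \mathcal{G}_i(\bu)$ with ${}_{\X^*}\langle \bu_i^*(\bv), \bu - \bv\rangle_{\X} \le \liminf_n {}_{\X^*}\langle \bu_{i,n}^*, \bu_n - \bv\rangle_{\X}$ for $i = 1,2$. Taking $\bu^*(\bv) := \bu_1^*(\bv) + \bu_2^*(\bv) \in \mathcal{G}(\bu)$ and using the superadditivity of $\liminf$, I obtain
\begin{align*}
{}_{\X^*}\langle \bu^*(\bv), \bu - \bv\rangle_{\X}
&\le \liminf_{n\to\infty} {}_{\X^*}\langle \bu_{1,n}^*, \bu_n - \bv\rangle_{\X} + \liminf_{n\to\infty} {}_{\X^*}\langle \bu_{2,n}^*, \bu_n - \bv\rangle_{\X} \\
&\le \liminf_{n\to\infty} {}_{\X^*}\langle \bu_n^*, \bu_n - \bv\rangle_{\X},
\end{align*}
which is precisely condition (3) for $\mathcal{G} = \mathcal{G}_1 + \mathcal{G}_2$, completing the verification.
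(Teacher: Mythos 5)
Your proof is correct. Note that the paper itself gives no proof of this proposition --- it is quoted verbatim from \cite[Proposition 1.3.68]{ZdSmP} --- and your argument is essentially the standard textbook one: Minkowski-sum closedness via weak compactness in the reflexive dual, the splitting of the $\limsup$ condition by contradiction using pseudomonotonicity of one summand with the test element $\bv=\bu$, and superadditivity of $\liminf$ to assemble condition (3). The only cosmetic point is that $\limsup_n\langle\bu_{1,n}^*,\bu_n-\bu\rangle$ could a priori equal $+\infty$ (the definition does not force the operators to map bounded sets to bounded sets), so ``converges to a positive limit'' should be read as allowing divergence to $+\infty$; the contradiction goes through unchanged since pseudomonotonicity of $\mathcal{G}_2$ still forces $\liminf_n\langle\bu_{2,n}^*,\bu_n-\bu\rangle\geq 0$ along the subsequence.
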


	We now define the concept of coercivity, which will be utilized in the subsequent analysis.
	\begin{definition}
		An operator $\mathcal{G} : \X \to 2^{\X^{*}}$ is \emph{coercive} if either $\D(\mathcal{G})$, where $\D(\mathcal{G})$ is the domain of $\mathcal{G}$,   is bounded or $\D(\mathcal{G})$ is unbounded and
		\begin{align*}
			\lim\limits_{\|\bu\|_{\X}\to\infty,\ \bu\in\D(\mathcal{G})}\frac{\inf\left\{ {}_{\X^{*}}\langle\bu^*,\bu\rangle_{\X} :\bu^*\in\mathcal{G}(\bu)\right\}}{\|\bu\|_{\X}}=\infty. 
		\end{align*}
	\end{definition}
	We now state the central surjectivity theorem applicable to operators that are both pseudomonotone and coercive (see \cite[Section 32.4]{EZ}, \cite[Theorem 2.6]{TRo}).
	\begin{theorem}[{\cite[Theorem 1.3.70]{ZdSmP}}]\label{thm-surjective}
		Let $\X$ be a reflexive Banach space and $\mathcal{G}: \X \to 2^{\X^{*}}$ be pseudomonotone and coercive. Then $\mathcal{G}$ is surjective, that is, $R(\mathcal{G}) = \X^{*}$, where $R(\mathcal{G})$ is the range of the operator $\mathcal{G}$.
	\end{theorem}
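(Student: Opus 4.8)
The plan is to establish surjectivity by solving the inclusion $\mathcal{G}(\bu)\ni\f$ for an arbitrary fixed $\f\in\X^{*}$, since arbitrariness of $\f$ yields $R(\mathcal{G})=\X^{*}$. The backbone is a Galerkin scheme combined with the pseudomonotone passage to the limit, closed off by a separation argument. First I would pass to finite dimensions: let $\Lambda$ be the family of all finite-dimensional subspaces $F\subset\X$, directed by inclusion, and for each $F$ let $i_F\colon F\hookrightarrow\X$ be the inclusion with adjoint $i_F^{*}\colon\X^{*}\to F^{*}$ the restriction map. Define $\mathcal{G}_F\colon F\to 2^{F^{*}}$ by $\mathcal{G}_F(\bv)=i_F^{*}\mathcal{G}(\bv)$. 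Since $\mathcal{G}(\bv)$ is nonempty, bounded, closed and convex, each $\mathcal{G}_F(\bv)$ is a nonempty, compact, convex subset of the finite-dimensional space $F^{*}$, and upper semicontinuity of $\mathcal{G}$ from finite-dimensional subspaces into $\X^{*}_w$ (together with the agreement of the weak and norm topologies on $F^{*}$) makes $\mathcal{G}_F$ upper semicontinuous with compact convex values.

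Next I would solve each finite-dimensional inclusion $i_F^{*}\f\in\mathcal{G}_F(\bv_F)$. Coercivity supplies the boundary condition required for a fixed-point argument: for $\bv\in F$ with $\|\bv\|_{\X}=\rho$ and any $\bv^{*}\in\mathcal{G}(\bv)$,
\begin{align*}
{}_{\X^{*}}\langle\bv^{*}-\f,\bv\rangle_{\X}\geq\inf\left\{{}_{\X^{*}}\langle\bu^{*},\bv\rangle_{\X}:\bu^{*}\in\mathcal{G}(\bv)\right\}-\|\f\|_{\X^{*}}\|\bv\|_{\X},
\end{align*}
which is strictly positive once $\rho$ is large enough, by the definition of coercivity. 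An acute-angle lemma for upper semicontinuous set-valued maps (a consequence of Kakutani's fixed point theorem, or of Brouwer degree theory) then yields $\bv_F\in F$ with $i_F^{*}\f\in\mathcal{G}_F(\bv_F)$; equivalently, there is $\bv_F^{*}\in\mathcal{G}(\bv_F)$ with ${}_{\X^{*}}\langle\bv_F^{*},\bw\rangle_{\X}={}_{\X^{*}}\langle\f,\bw\rangle_{\X}$ for all $\bw\in F$. Taking $\bw=\bv_F$ in the same coercivity estimate shows $\{\bv_F\}$ is bounded in $\X$, and boundedness of $\mathcal{G}$ makes $\{\bv_F^{*}\}$ bounded in $\X^{*}$; by reflexivity I extract a subnet with $\bv_F\rightharpoonup\bu$ in $\X$ and $\bv_F^{*}\rightharpoonup\boldsymbol{\xi}$ in $\X^{*}$, and testing against a fixed $\bw$ (which eventually lies in $F$) identifies $\boldsymbol{\xi}=\f$.

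The decisive step is the limit passage. Writing ${}_{\X^{*}}\langle\bv_F^{*},\bv_F-\bu\rangle_{\X}={}_{\X^{*}}\langle\f,\bv_F\rangle_{\X}-{}_{\X^{*}}\langle\bv_F^{*},\bu\rangle_{\X}$ and using $\bv_F\rightharpoonup\bu$ together with $\bv_F^{*}\rightharpoonup\f$, both terms tend to ${}_{\X^{*}}\langle\f,\bu\rangle_{\X}$, so $\limsup{}_{\X^{*}}\langle\bv_F^{*},\bv_F-\bu\rangle_{\X}=0\leq 0$. Hence the defining property of a pseudomonotone multi-valued operator applies: for every $\bv\in\X$ there is $\bu^{*}(\bv)\in\mathcal{G}(\bu)$ with
\begin{align*}
{}_{\X^{*}}\langle\bu^{*}(\bv),\bu-\bv\rangle_{\X}\leq\liminf{}_{\X^{*}}\langle\bv_F^{*},\bv_F-\bv\rangle_{\X}={}_{\X^{*}}\langle\f,\bu-\bv\rangle_{\X},
\end{align*}
the last equality again following from $\bv_F\rightharpoonup\bu$ and $\bv_F^{*}\rightharpoonup\f$. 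I would then argue by contradiction that $\f\in\mathcal{G}(\bu)$: if not, since $\mathcal{G}(\bu)$ is a nonempty, closed, convex (hence weakly closed) subset of $\X^{*}$ and $\X^{**}=\X$, strict separation produces $\bw\in\X$ with ${}_{\X^{*}}\langle\f,\bw\rangle_{\X}<\inf\left\{{}_{\X^{*}}\langle\boldsymbol{\eta},\bw\rangle_{\X}:\boldsymbol{\eta}\in\mathcal{G}(\bu)\right\}$; choosing $\bv=\bu-\bw$ above gives ${}_{\X^{*}}\langle\bu^{*}(\bv),\bw\rangle_{\X}\leq{}_{\X^{*}}\langle\f,\bw\rangle_{\X}$, contradicting the separation. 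Therefore $\f\in\mathcal{G}(\bu)$, proving surjectivity.

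I expect the main obstacle to be the finite-dimensional existence step: one must verify carefully that $\mathcal{G}_F$ inherits upper semicontinuity and compact convex values and then invoke the correct set-valued acute-angle/fixed-point lemma, all while ensuring the coercivity estimate survives restriction to $F$. A secondary subtlety is the use of a net rather than a sequence when $\X$ is not separable; under a separability assumption the directed family may be replaced by an increasing sequence of subspaces with dense union, allowing the entire argument to be carried out with ordinary sequences.
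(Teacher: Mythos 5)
The paper does not prove this statement: it is quoted verbatim as a classical result and attributed to \cite[Theorem 1.3.70]{ZdSmP} (see also the pointers to \cite[Section 32.4]{EZ} and \cite[Theorem 2.6]{TRo}), so there is no in-paper argument to compare against. Your proposal is the standard textbook proof of that cited theorem --- Galerkin approximation over the directed family of finite-dimensional subspaces, finite-dimensional solvability via coercivity plus a set-valued acute-angle/Kakutani lemma, a priori bounds and weak subnet extraction by reflexivity, identification of the weak limit of the discrete multipliers with $\f$, the pseudomonotone limit passage giving $\langle\bu^{*}(\bv),\bu-\bv\rangle\leq\langle\f,\bu-\bv\rangle$ for all $\bv$, and Hahn--Banach separation to conclude $\f\in\mathcal{G}(\bu)$ --- and it is correct in outline. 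The one point you should not wave away is the mismatch between your use of nets and the sequential formulation of pseudomonotonicity adopted in the paper (condition (3) of the multi-valued definition is stated for sequences $\{\bu_n\}$): either impose separability and replace the directed family by an increasing sequence of subspaces with dense union, or argue via weak closedness of the solution sets and the finite intersection property; you flag this yourself, which is the honest thing to do.
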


	The following  result is needed to establish the covergence of the nonlinear term. 
	\begin{lemma}[Brezis-Lions Lemma, {\cite[Lemma 1.3]{JLL}}]\label{Lem-Lions}
		Let $\mathcal{O}$ be a bounded open set of $\R^n$, $\varphi_m$ and $\varphi$ be functions in $\mathrm{L}^q(\mathcal{O})$, for $m\in\N$ and $1<q <\infty$, such that
		\begin{align*}
			\|\varphi_m\|_{L^q(\mathcal{O})} \leq C,\ \ \mbox{for every}\ \ m\in\N \; \; \mbox{and} \ \ \varphi_m \to \varphi\;\; \mbox{a.e. in}\ \ \mathcal{O},\  \mbox{as}\ \ m \to \infty.
		\end{align*}
		Then, $\varphi_m \rightharpoonup  \varphi$ in $\mathrm{L}^q(\mathcal{O})$, as $m\to \infty$.
	\end{lemma}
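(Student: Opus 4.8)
The plan is to prove the asserted weak convergence directly from its definition, namely to show that for every test function $g \in \mathrm{L}^{q'}(\mathcal{O})$, where $q'$ denotes the conjugate exponent satisfying $\frac{1}{q}+\frac{1}{q'}=1$, one has $\int_{\mathcal{O}}\varphi_m g\,\mathrm{d}x \to \int_{\mathcal{O}}\varphi g\,\mathrm{d}x$. The central tool is Egorov's theorem, which is applicable because $\mathcal{O}$ is a bounded open set and therefore of finite Lebesgue measure, and because $\varphi_m\to\varphi$ almost everywhere. Accordingly, for each $\delta>0$, Egorov's theorem furnishes a measurable set $A_\delta\subset\mathcal{O}$ with $|\mathcal{O}\setminus A_\delta|<\delta$ on which the convergence $\varphi_m\to\varphi$ is uniform. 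The strategy is to split the integral over $A_\delta$ (where convergence is strong) and over the small ``bad set'' $\mathcal{O}\setminus A_\delta$ (where the uniform bound is exploited).

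Writing $\int_{\mathcal{O}}(\varphi_m-\varphi)g\,\mathrm{d}x = \int_{A_\delta}(\varphi_m-\varphi)g\,\mathrm{d}x + \int_{\mathcal{O}\setminus A_\delta}(\varphi_m-\varphi)g\,\mathrm{d}x$, I would estimate the two pieces separately. On $A_\delta$, uniform convergence gives $\|\varphi_m-\varphi\|_{\mathrm{L}^\infty(A_\delta)}\to 0$, while Hölder's inequality together with the finiteness of $|\mathcal{O}|$ yields $\|g\|_{\mathrm{L}^1(A_\delta)}\leq \|g\|_{\mathrm{L}^{q'}(\mathcal{O})}|A_\delta|^{1/q}<\infty$; hence the first integral vanishes as $m\to\infty$ for each fixed $\delta$. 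On $\mathcal{O}\setminus A_\delta$, Hölder's inequality and the uniform bound (noting that $\varphi\in\mathrm{L}^q(\mathcal{O})$ is part of the hypothesis, with $\|\varphi\|_{\mathrm{L}^q(\mathcal{O})}\leq C$ available from Fatou's lemma if desired) give the bound $\bigl|\int_{\mathcal{O}\setminus A_\delta}(\varphi_m-\varphi)g\,\mathrm{d}x\bigr|\leq \bigl(C+\|\varphi\|_{\mathrm{L}^q(\mathcal{O})}\bigr)\|g\|_{\mathrm{L}^{q'}(\mathcal{O}\setminus A_\delta)}$, which is controlled uniformly in $m$.

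The main obstacle — and the only place where the hypotheses $1<q<\infty$ and the uniform $\mathrm{L}^q$-bound are genuinely used — is obtaining control of the bad-set contribution that is uniform in $m$ and small as $\delta\to 0$. This is resolved by absolute continuity of the integral: since $g\in\mathrm{L}^{q'}(\mathcal{O})$ with $q'<\infty$ (this is exactly where $q>1$ enters), the quantity $\|g\|_{\mathrm{L}^{q'}(\mathcal{O}\setminus A_\delta)}$ tends to $0$ as $\delta\to 0$, because $|\mathcal{O}\setminus A_\delta|<\delta$, and this decay is independent of $m$. Therefore, given $\epsilon>0$, I would first fix $\delta$ small enough that the bad-set term is below $\epsilon/2$ for every $m$, and then take $m$ large enough that the good-set term is below $\epsilon/2$; this establishes $\int_{\mathcal{O}}\varphi_m g\,\mathrm{d}x\to\int_{\mathcal{O}}\varphi g\,\mathrm{d}x$ for arbitrary $g\in\mathrm{L}^{q'}(\mathcal{O})$, which is precisely $\varphi_m\rightharpoonup\varphi$ in $\mathrm{L}^q(\mathcal{O})$. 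An alternative route would extract a weakly convergent subsequence using reflexivity of $\mathrm{L}^q(\mathcal{O})$ for $1<q<\infty$, identify its weak limit with $\varphi$ (via the same Egorov argument or Mazur's lemma), and conclude by a subsequence-of-every-subsequence argument; the direct approach above, however, avoids any appeal to weak compactness.
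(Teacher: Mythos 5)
Your proof is correct. Note, however, that the paper does not prove this lemma at all: it is quoted verbatim from Lions \cite[Lemma 1.3]{JLL} and used as a black box (in Part~4 of the proof of Theorem \ref{thm-main-hemi}, to pass to the weak limit in the nonlinear terms $\mathcal{C}(\bu_n)$, $\mathcal{C}_0(\bu_n)$), so there is no in-paper argument to compare against. Your Egorov-based argument is essentially the classical proof of this result: split $\int_{\mathcal{O}}(\varphi_m-\varphi)g$ over the Egorov set $A_\delta$, where uniform convergence kills the integral for fixed $\delta$, and over the exceptional set $\mathcal{O}\setminus A_\delta$, where H\"older plus the uniform $\mathrm{L}^q$ bound and the absolute continuity of $\int|g|^{q'}$ (this is exactly where $1<q<\infty$, hence $q'<\infty$, is needed) give a bound that is uniform in $m$ and vanishes as $\delta\to 0$; the order of quantifiers in your final $\epsilon$--$\delta$ step is handled correctly. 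The alternative you sketch --- reflexivity of $\mathrm{L}^q$, extraction of a weak limit, identification with $\varphi$, and a subsequence-of-subsequences argument --- is equally standard; your direct route has the small advantage of not invoking weak compactness, at the cost of still needing Egorov to identify the limit in either case. The only cosmetic remark is that the Fatou estimate $\|\varphi\|_{\mathrm{L}^q}\leq C$ is not needed, since $\varphi\in\mathrm{L}^q(\mathcal{O})$ is already part of the hypothesis, as you yourself observe.
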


	\subsection{Functional setting} 
	To express the problem \eqref{eqn-CBF}-\eqref{eqn-boundary-2} in a weak (variational) formulation, we begin by defining suitable function spaces. The velocity field is assumed to belong to the following space:
\begin{equation*}
	\mathcal{V}:=\left\{\bv\in\mathrm{H}^1(\mathcal{O};\mathbb{R}^d):\bv=\boldsymbol{0}\ \text{ on }\ \Gamma_0, \ v_n=0\ \text{ on }\ \Gamma_1\right\}.
\end{equation*}
Using Korn’s inequality and the assumption on $|\Gamma_0|>0$ imply (see \cite[Lemma 6.2]{NKJTO}) \begin{align}\label{eqn-equiv-1}\|\bv\|_{\mathrm{H}^1(\mathcal{O};\mathbb{R}^d)}\leq C_k\|\boldsymbol{\varepsilon}(\boldsymbol{v})\|_{\mathrm{L}^2(\mathcal{O};\mathbb{S}^d)},\end{align}  one can verify  that $\mathcal{V}$ is a Hilbert space with the inner product 
\begin{align*}
	(\bu,\bv)_{\mathcal{V}}=(\boldsymbol{\varepsilon}(\boldsymbol{u}),\boldsymbol{\varepsilon}(\boldsymbol{v}))_{\mathrm{L}^2(\mathcal{O};\mathbb{S}^d)}. 
\end{align*}
The norm in $\mathcal{V}$ is denoted by 
\begin{align*}
	\|\bu\|_{\mathcal{V}}=\|\boldsymbol{\varepsilon}(\boldsymbol{u})\|_{\mathrm{L}^2(\mathcal{O};\mathbb{S}^d)}=\left(\sum_{i,j=1}^d\int_{\mathcal{O}}|\varepsilon_{ij}(\bv)|^2\d\x\right)^{1/2},
\end{align*}
which is equivalent to the $\mathbb{H}^1(\mathcal{O})$ norm. In the formulation of the reduced problem, we will consider the following subspace of $\mathcal{V}$: 
\begin{align*}
	\V_{\sigma}:=\left\{\bv\in\mathcal{V}:\mathrm{div \ }\bv=0\ \text{ in }\ \mathcal{O}\right\}. 
\end{align*}
Let us define $\mathcal{H}:=\mathrm{L}^2(\mathcal{O};\mathbb{R}^d)$. Then the continuous following embeddings:
\begin{align*}
	\mathcal{V}\hookrightarrow\mathcal{H}\equiv\mathcal{H}^*\hookrightarrow\mathcal{V}^*
\end{align*}
and 
\begin{align*}
	\V_{\sigma}\hookrightarrow\mathcal{H}\equiv\mathcal{H}^*\hookrightarrow\V_{\sigma}^*
\end{align*}
are dense and compact. Let us further denote $\mathcal{V}_0:=\mathbb{H}^1_0(\mathcal{O})=\mathrm{H}_0^1(\mathcal{O};\R^d)$  and define 
\begin{align*}
	\V_{\sigma, 0}:=\left\{\bv\in\mathcal{V}_0:\mathrm{div \ }\bv=0\ \text{ in }\ \mathcal{O}\right\}. 
\end{align*}
The sum space $\V^*+{\L}^{p'},$ where $\frac{1}{p}+\frac{1}{p'}$, $p\in[2,\infty)$, is well-defined and forms a Banach space under the norm
\begin{align}\label{22}
	\|\bv\|_{\V^*+{\L}^{p'}} :&= \inf\left\{\|\bv_1\|_{\V^*} + \|\bv\|_{{\L}^{p'}} : \bv = \bv_1 + \bv_2, \bv_1 \in \V^*, \bv_2 \in {\L}^{p'}\right\}\nonumber\\
	&= \sup\left\{\frac{|\langle\bv, \g\rangle|}{\|\g\|_{\V\cap{\L}^p}} : \boldsymbol{0} \neq \g \in \V \cap {\L}^p\right\},
\end{align}
where the norm on the intersection space $\V \cap {\L}^p$ is defined by
$
\|\cdot\|_{\V\cap{\L}^p} := \max\left\{\|\cdot\|_{\V}, \|\cdot\|_{{\L}^p}\right\}.
$
This norm is equivalent to both $\|\bv\|_{\V} + \|\bv\|_{{\L}^p}$ and $\sqrt{\|\bv\|_{\V}^2 + \|\bv\|_{{\L}^p}^2}$ on the space $\V \cap {\L}^p$.
Furthermore, the following continuous embeddings hold:
$$
\V \cap {\L}^p \hookrightarrow \V \hookrightarrow \mathcal{H}\cong \mathcal{H}^{*} \hookrightarrow \V^{*} \hookrightarrow \V^{*}+{\L}^{p'},
$$
with the embedding $\V \hookrightarrow \mathcal{H}$ being compact.

For the pressure $p$, we define the space 
\begin{align*}
Q:=\left\{q\in\mathrm{L}^2(\mathcal{O};\mathbb{R}): (q,1)_{\mathcal{O}}:=\int_{\mathcal{O}}q(\x)\d\x=0\right\},
\end{align*}
with the norm $\|q\|_{Q}:=\|q\|_{\mathrm{L}^2(\mathcal{O})}$. 

\subsection{Bilinear, trilinear and nonlinear forms}
We now introduce three bilinear forms, one trilinear form and a nonlinear form, which will be used in the weak formulation of the problem. For all $\bu, \bv, \bw\in \mathcal{V}$ and $q \in Q$, these forms are defined as follows: 
\begin{align*}
	\mathfrak{a}(\bu,\bv)&=\int_{\mathcal{O}}2\boldsymbol{\varepsilon}(\boldsymbol{u}):\boldsymbol{\varepsilon}(\boldsymbol{v})\d\x,\ \
	\mathfrak{a}_0(\bu,\bv)=\int_{\mathcal{O}}\boldsymbol{u}\cdot\boldsymbol{v}\d\x,\\
	\mathfrak{d}(\bv,q)&=-\int_{\mathcal{O}}q\mathrm{div\ }\bv\d\x,\\
	\mathfrak{b}(\bu,\bv,\bw)&=\int_{\mathcal{O}}(\bu\cdot\nabla)\bv\cdot\bw\d\x,\\
	\mathfrak{c}(\bu,\bv)&=\int_{\mathcal{O}}|\bu|^{r-1}\bu\cdot\bv\d\x,\ \
	\mathfrak{c}_0(\bu,\bv)=\int_{\mathcal{O}}|\bu|^{q-1}\bu\cdot\bv\d\x.
\end{align*}
Moreover, for $\f\in\L^2(\mathcal{O}):=\mathrm{L}^2(\mathcal{O};\mathbb{R}^d)$, we write 
\begin{align*}
	(\f,\bv)_{\L^2(\mathcal{O})}=\int_{\mathcal{O}}\f\cdot\bv\d\x,
\end{align*}
and for $\f\in\V^*$, we use the notation $\langle\f,\bv\rangle$, for any $\bv\in\V$. 

To develop a numerical algorithm based on the linearization procedure described in Subsection \ref{sub-sol-alg}, we also introduce the following notations:
\begin{align}\label{eqn-c-c0}
	\mathfrak{c}(\bu,\bv,\bw)=\int_{\mathcal{O}}|\bu|^{r-1}\bv\cdot\bw\d\x,\ \
	\mathfrak{c}_0(\bu,\bv,\bw)=\int_{\mathcal{O}}|\bu|^{q-1}\bv\cdot\bw\d\x.
\end{align}
Note that the bilinear form $\mathfrak{a}(\cdot, \cdot)$ is both bounded and coercive on the space $\mathcal{V}$. Specifically, this implies that $\mathfrak{a}(\bu, \bv)$ satisfies the continuity and ellipticity (coercivity) conditions for all $\bu, \bv \in \mathcal{V}$. More precisely, we have:
\begin{align}
|\mathfrak{a}(\bu,\bv)|	&\leq 2\|\bu\|_{\V}\|\bv\|_{\V}\ \text{ for all }\ \bu,\bv\in\V,\label{eqn-a-est-1}\\
\mathfrak{a}(\bu,\bu)&=2\|\bu\|_{\V}^2 \ \text{ for all }\ \bu\in\V. \label{eqn-a-est-2}
\end{align}
The bilinear form $\mathfrak{d}(\cdot,\cdot)$ is bounded in $\V\times Q$, since 
\begin{align}
	|\mathfrak{d}(\bv,q)|\leq C\|\bv\|_{\V}\|q\|_Q \ \text{ for all } \bv \in \V, q \in Q.
\end{align}
Using the Gagliardo-Nirenberg inequality (\cite[Theorem 1, pages 11-12]{LNi}) and  Korn's inequality  (see \eqref{eqn-equiv-1}), one can show that the trilinear form $\mathfrak{b}(\cdot,\cdot,\cdot)$ is bounded, since 
\begin{align}\label{eqn-b-est-1}
	|\mathfrak{b}(\bu,\bv,\bw)|&\leq \|\bu\|_{\mathbb{L}^4(\mathcal{O})}\|\nabla\bv\|_{\mathbb{L}^2(\mathcal{O})}\|\bw\|_{\mathbb{L}^4(\mathcal{O})}\nonumber\\&\leq C_k C_g^2\|\bu\|_{\H}^{1-\frac{d}{4}}\|\bu\|_{\H^1}^{\frac{d}{4}}\|\bv\|_{\V}\|\bw\|_{\H}^{1-\frac{d}{4}}\|\bw\|_{\H^1}^{\frac{d}{4}}\nonumber\\&\leq C_b\|\bu\|_{\V}\|\bv\|_{\V}\|\bw\|_{\V},
\end{align}
where $$C_b=C_k^3C_g^2,$$ $C_g$ is the constant appearing in the Gagliardo-Nirenberg inequality  and $C_k$ is defined in \eqref{eqn-equiv-1}. Moreover, we have 
\begin{align}
	&\mathfrak{b}(\bu,\bv,\bw)=-\mathfrak{b}(\bu,\bw,\bv)\ \text{ for all }\bu,\bv,\bw\in\V, \label{eqn-b-est-3}\\
	&\mathfrak{b}(\bu,\bv,\bv)=0\ \text{ for all }\bu,\bv\in\V. \label{eqn-b-est-4}
\end{align}
The nonlinear form $\mathfrak{c}(\cdot,\cdot)$ is bounded in $\L^{r+1}(\mathcal{O})\times \L^{r+1}(\mathcal{O})$, since 
\begin{align}
	|\mathfrak{c}(\bu,\bv)|\leq\|\bu\|_{\L^{r+1}}^r\|\bv\|_{\L^{r+1}}. 
\end{align}
Furthermore, we infer 
\begin{align}\label{eqn-c-est-1}
	\mathfrak{c}(\bu,\bu)=\|\bu\|_{\L^{r+1}}^{r+1}. 
\end{align}
From \cite[Section 2.4]{SGMTM}, we infer 
for all 	$\bu,\bv\in\L^{r+1}(\mathcal{O})$ and $r\geq 1$ that 
\begin{align}\label{2.23}
	&\langle\bu|\bu|^{r-1}-\bv|\bv|^{r-1},\bu-\bv\rangle\geq \frac{1}{2}\||\bu|^{\frac{r-1}{2}}(\bu-\bv)\|_{\H}^2+\frac{1}{2}\||\bv|^{\frac{r-1}{2}}(\bu-\bv)\|_{\H}^2\geq 0,
\end{align}
and 
\begin{align}\label{Eqn-mon-lip}
	&\langle\bu|\bu|^{r-1}-\bv|\bv|^{r-1},\bu-\bv\rangle
	\geq \frac{1}{2^{r-1}}\|\bu-\bv\|_{\L^{r+1}}^{r+1}.
\end{align}
For $\mathcal{C}(\bu)=|\bu|^{r-1}\bu$ such that $\mathfrak{c}(\bu,\bv)=\langle\mathcal{C}(\bu),\bv\rangle$, for all $\bv\in{\L}^{r+1}$,  the Gateaux derivative is given by 
\begin{align}\label{Gaetu}
	\mathcal{C}'(\bu)\bv&=\left\{\begin{array}{cl}\bv,&\text{ for }r=1,\\ \left\{\begin{array}{cc}|\bu|^{r-1}\bv+(r-1)\frac{\bu}{|\bu|^{3-r}}(\bu\cdot\bv),&\text{ if }\bu\neq \boldsymbol{0},\\\boldsymbol{0},&\text{ if }\bu=\boldsymbol{0},\end{array}\right.&\text{ for } 1<r<3,\\ |\bu|^{r-1}\bv+(r-1)\bu|\bu|^{r-3}(\bu\cdot\bv), &\text{ for }r\geq 3.\end{array}\right.
\end{align}
Similar estimates hold for $\mathfrak{c}_0(\bu,\bv)$ also. 

\subsection{Assumptions on the superpotential}
With regard to the superpotential $j$,  we impose the following assumptions:
\begin{hypothesis}\label{hyp-sup-j}
	$j:\Gamma_1\times\R^d\to \R$ is such that 
	\begin{itemize}
		\item [(H1)] $j (\cdot,\boldsymbol{\xi})$ is measurable on $\Gamma_1$ for all $\boldsymbol{\xi}\in\R^d$  and $j (\cdot,\boldsymbol{0})\in \mathbb{L}^1(\Gamma_1)$;
		\item [(H2)] $j(\x,\cdot)$ is locally Lipschitz on $\R^d$ for a.e. $\x\in\Gamma_1$; 
		\item [(H3)] $|\boldsymbol{\eta}|\leq k_0+k_1|\boldsymbol{\xi}|$ for all $\boldsymbol{\xi}\in\R^d$, $\boldsymbol{\eta}\in\partial j(\x,\boldsymbol{\xi})$ for a.e. $\x\in\Gamma_1$ with $k_0,k_1\geq 0$;
		\item [(H4)] $(\boldsymbol{\eta}_1-\boldsymbol{\eta}_2)\cdot(\boldsymbol{\xi}_1-\boldsymbol{\xi}_2)\geq -\delta_1|\boldsymbol{\xi}_1-\boldsymbol{\xi}_2|^2$ for all $\boldsymbol{\xi}_i\in\R^d$, $\boldsymbol{\eta}_i\in \partial j(\x,\boldsymbol{\xi}_i)$, $i=1,2,$ for a.e. $\x\in\Gamma_1$  with $\delta_1\geq 0$. 
	\end{itemize}
\end{hypothesis}

Condition (H4) is commonly referred to in the literature as a relaxed monotonicity condition (see \cite[Definition 3.49]{SMAOMS}). It can also be equivalently formulated as follows:
\begin{align}\label{eqn-alternative}
	j^0(\boldsymbol{\xi}_1;\boldsymbol{\xi}_2-\boldsymbol{\xi}_1)+	j^0(\boldsymbol{\xi}_2;\boldsymbol{\xi}_1-\boldsymbol{\xi}_2)\leq\delta_1|\boldsymbol{\xi}_1-\boldsymbol{\xi}_2|^2\ \text{ for all }\ \boldsymbol{\xi}_1,\boldsymbol{\xi}_2\in\R^d. 
	\end{align}
	Let us now consider the functional $J: \L^2(\Gamma)\to\R$ defined by 
	\begin{align}\label{eqn-J-def}
		J(\bv)=\int_{\Gamma_1}j(\x, \bv_{\tau}(\x))\d S,\ \bv\in\L^2(\mathcal{O}). 
	\end{align}
	
	The following result is adapted from \cite[Lemma 13]{SMAO} and \cite[Lemma 6.2]{CFWH}, with some minor modifications.
	
	\begin{lemma}\label{lem-hemi-var}
		Assume that $j : \Gamma_1 \times \R \to \R$ satisfies Hypothesis \ref{hyp-sup-j}. Then the functional $J$, defined by equation \eqref{eqn-J-def}, has the following properties:
			\begin{enumerate}
			\item $J(\cdot)$ is locally Lipschitz in $\mathbb{L}^2(\Gamma_1)$. 
			\item $\|\z\|_{\mathbb{L}^2(\Gamma_1)}\leq k_0|\Gamma_1|^{1/2}+k_1\|\bv\|_{\L^2(\Gamma_1)}$ for all $\bv\in\L^2(\Gamma_1)$, $\z\in\partial J(\bv)$ with $k_1,k_2\geq 0$,
			\item $J^0(\bu;\bv)\leq\int_{\Gamma_1}j^0(\bu_{\tau};\bv_{\tau})\d S$ for all $\bu,\bv\in\mathbb{L}^2(\Gamma_1)$. 
			\item $(\z_1-\z_2,\bu_1-\bu_2)_{\mathbb{L}^2(\Gamma_1)}\geq -\delta_1\|\bu_1-\bu_2\|_{\mathbb{L}^2(\Gamma_1)}^2$ for all $\z_i\in\partial J(\bu_i)$, $\bu_i\in \L^2(\Gamma_1)$, $i=1,2$, with $\delta_1\geq 0$. 
		\end{enumerate}
	\end{lemma}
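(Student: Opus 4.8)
The plan is to transfer the pointwise properties of the superpotential $j(\x,\cdot)$ encoded in Hypothesis \ref{hyp-sup-j} to the integral functional $J$, using throughout that the tangential projection $\bv\mapsto\bv_\tau$ is linear and satisfies $|\bv_\tau(\x)|\le|\bv(\x)|$ pointwise, so that $\|\bv_\tau\|_{\L^2(\Gamma_1)}\le\|\bv\|_{\L^2(\Gamma_1)}$. I would establish the four assertions in the order (1), (3), (2), (4), since the last two follow cleanly once the first two are available. For (1), I would invoke Lebourg's mean value theorem together with the growth bound (H3) to obtain, for a.e. $\x\in\Gamma_1$, the pointwise estimate $|j(\x,\boldsymbol{\xi}_1)-j(\x,\boldsymbol{\xi}_2)|\le\big(k_0+k_1(|\boldsymbol{\xi}_1|+|\boldsymbol{\xi}_2|)\big)|\boldsymbol{\xi}_1-\boldsymbol{\xi}_2|$. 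Integrating over $\Gamma_1$ with $\boldsymbol{\xi}_i$ replaced by tangential traces and applying the Cauchy--Schwarz inequality yields $|J(\bu)-J(\bv)|\le\big(k_0|\Gamma_1|^{1/2}+k_1(\|\bu\|_{\L^2(\Gamma_1)}+\|\bv\|_{\L^2(\Gamma_1)})\big)\|\bu-\bv\|_{\L^2(\Gamma_1)}$, a local Lipschitz bound on every ball of $\L^2(\Gamma_1)$; finiteness of $J$ comes from $j(\cdot,\boldsymbol{0})\in\L^1(\Gamma_1)$ by (H1). Hence $J^0$ and $\partial J$ are well defined.

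The crux is (3). Choosing sequences $\bw_n\to\bu$ in $\L^2(\Gamma_1)$ and $\lambda_n\downarrow0$ that realize the $\limsup$ defining $J^0(\bu;\bv)$, I would pass to a subsequence along which $\bw_{n,\tau}\to\bu_\tau$ a.e. on $\Gamma_1$ with $|\bw_{n,\tau}|$ dominated by a fixed $\L^2(\Gamma_1)$ function. By Lebourg's theorem and (H3), the difference quotients $\lambda_n^{-1}\big(j(\x,\bw_{n,\tau}+\lambda_n\bv_\tau)-j(\x,\bw_{n,\tau})\big)$ are bounded above by the integrable majorant $\big(k_0+k_1(|\bw_{n,\tau}|+\lambda_n|\bv_\tau|)\big)|\bv_\tau|$, so the reverse Fatou lemma applies. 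Since pointwise $\limsup_n\lambda_n^{-1}\big(j(\x,\bw_{n,\tau}+\lambda_n\bv_\tau)-j(\x,\bw_{n,\tau})\big)\le j^0(\x,\bu_\tau;\bv_\tau)$ directly from the definition of the generalized directional derivative of $j(\x,\cdot)$, pushing the $\limsup$ inside the integral gives $J^0(\bu;\bv)\le\int_{\Gamma_1}j^0(\bu_\tau;\bv_\tau)\d S$.

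Assertions (2) and (4) then follow from (3). For (2), local Lipschitzness gives $\partial J(\bv)\subset\L^2(\Gamma_1)$, and Proposition \ref{prop-clarke}(2) combined with (3) and (H3) gives $\|\z\|_{\L^2(\Gamma_1)}=\sup_{\|\bw\|_{\L^2(\Gamma_1)}=1}(\z,\bw)_{\L^2(\Gamma_1)}\le\sup_{\|\bw\|_{\L^2(\Gamma_1)}=1}\int_{\Gamma_1}(k_0+k_1|\bv_\tau|)|\bw_\tau|\d S\le k_0|\Gamma_1|^{1/2}+k_1\|\bv\|_{\L^2(\Gamma_1)}$ after a final Cauchy--Schwarz step. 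For (4), with $\z_i\in\partial J(\bu_i)$ the subdifferential inequality $(\z_i,\bw)_{\L^2(\Gamma_1)}\le J^0(\bu_i;\bw)$ holds for all $\bw$; taking $\bw=\bu_2-\bu_1$ and $\bw=\bu_1-\bu_2$ and adding, then bounding the right side via (3) and the equivalent relaxed-monotonicity form \eqref{eqn-alternative} applied pointwise, produces $-(\z_1-\z_2,\bu_1-\bu_2)_{\L^2(\Gamma_1)}\le\delta_1\|\bu_1-\bu_2\|_{\L^2(\Gamma_1)}^2$, which is the stated inequality.

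I expect the main obstacle to be the rigorous interchange of $\limsup$ and integration in (3): one must extract a subsequence with a.e. convergence and secure an $n$-uniform $\L^2$ majorant so that the reverse Fatou lemma is legitimate. The growth condition (H3) is precisely what provides this domination, and the consistent use of $|\bv_\tau|\le|\bv|$ is what lets the bounds on the tangential traces be rephrased in terms of the full $\L^2(\Gamma_1)$ norms demanded by the statement.
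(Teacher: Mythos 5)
The paper does not actually prove this lemma: it is stated without proof and attributed to \cite[Lemma 13]{SMAO} and \cite[Lemma 6.2]{CFWH}. Your argument is correct and is essentially the standard proof from those sources — Lebourg's mean value theorem plus (H3) for the local Lipschitz bound and the pointwise difference-quotient majorant, extraction of an a.e.-convergent subsequence with an $\L^2$ dominating function so that the reverse Fatou lemma yields the Aubin–Clarke inequality in (3), and then (2) and (4) as direct consequences of (3) via Proposition \ref{prop-clarke}(2), (H3), and \eqref{eqn-alternative}, using throughout that $|\bv_\tau|\le|\bv|$ pointwise. The only point you gloss over is the measurability of $\x\mapsto j^0(\x,\bu_\tau(\x);\bv_\tau(\x))$, which is needed for the right-hand side of (3) to be a well-defined integral and follows from (H1)–(H2) by standard upper-semicontinuity arguments.
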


	\subsection{Problem formulation}
	To derive the weak formulation of the problem \eqref{eqn-CBF}-\eqref{eqn-boundary-2}, it is helpful to rewrite the first equation \eqref{eqn-CBF} as follows: 
	\begin{align}\label{eqn-concrete-new}
		-2\mu\mathrm{div}(\boldsymbol{\varepsilon}(\bu))+(\bu\cdot\nabla)\bu+\alpha\bu+\beta|\bu|^{r-1}\bu+\kappa|\bu|^{q-1}\bu+\nabla p=\f, \ \text{ in }\ \mathcal{O}.
	\end{align}
	Assume that the problem \eqref{eqn-CBF}–\eqref{eqn-boundary-2} admits a smooth solution $(\bu, p)$, so that all subsequent calculations are well-defined. Taking the inner product of equation \eqref{eqn-concrete-new} with an arbitrary smooth test function $\bv \in \V \cap \L^{r+1}(\mathcal{O})$, we obtain:
		\begin{align}
		&\int_{\mathcal{O}}\left[-2\mu\mathrm{div}(\boldsymbol{\varepsilon}(\bu))\cdot\bv+(\bu\cdot\nabla)\bu\cdot\bv+\alpha\bu\cdot\bv+\beta|\bu|^{r-1}\bu\cdot\bv+\kappa|\bu|^{q-1}\bu\cdot\bv+\nabla p\cdot \bv\right]\d\x \nonumber\\&\qquad=\int_{\mathcal{O}}\f\cdot\bv\d\x. 
	\end{align}
	Performing integration by parts, we deduce 
	\begin{align}
		\int_{\mathcal{O}}\Big[2\mu\boldsymbol{\varepsilon}(\bu)\cdot\boldsymbol{\varepsilon}(\bu)+(\bu\cdot\nabla\bu)\cdot\bv+\alpha\bu\cdot\bv&+\beta|\bu|^{r-1}\bu\cdot\bv+\kappa|\bu|^{q-1}\bu\cdot\bv-p\mathrm{div\ }\bv\Big]\d\x\nonumber\\-\int_{\Gamma}\boldsymbol{\sigma}\n\cdot\bv\d S &=\int_{\mathcal{O}}\f\cdot\bv\d\x. 
	\end{align}
	Applying the boundary conditions satisfied by $\bv$, we obtain 
	\begin{align}
		\mu \mathfrak{a}(\bu,\bv)+\mathfrak{b}(\bu,\bu,\bv)+\alpha \mathfrak{a}_0(\bu,\bv)&+\beta \mathfrak{c}(\bu,\bv)+\kappa \mathfrak{c}_0(\bu,\bv)+\mathfrak{d}(\bv,p)\nonumber\\+\int_{\Gamma_1}(-\boldsymbol{\sigma}_\tau)\cdot\bv_{\tau}\d S&=	(\f,\bv)_{\L^2(\mathcal{O})}. 
	\end{align}
	Using the boundary condition \eqref{eqn-boundary-2}, we infer 
	\begin{align}
		-\boldsymbol{\sigma}_\tau\in\partial j(\bu_{\tau})\ \text{ on }\ \Gamma_1, 
	\end{align}
	so that 
	\begin{align}
		\int_{\Gamma_1}(-\boldsymbol{\sigma}_\tau)\cdot\bv_{\tau}\d S\leq \int_{\Gamma_1}j^0(\bu_{\tau};\bv_{\tau})\d S.
	\end{align}
	Therefore, assuming that $\f\in\V^*$,  for smooth $\bv\in\V\cap\L^{r+1}$, we have 
	\begin{align}
		\mu \mathfrak{a}(\bu,\bv)+\mathfrak{b}(\bu,\bu,\bv)+\alpha \mathfrak{a}_0(\bu,\bv)&+\beta \mathfrak{c}(\bu,\bv)+\kappa \mathfrak{c}_0(\bu,\bv)+\mathfrak{d}(\bv,p)\nonumber\\+\int_{\Gamma_1}j^0(\bu_{\tau};\bv_{\tau})\d S&\geq 	\langle\f,\bv\rangle. 
	\end{align}
	Next, taking the inner product of the second equation in \eqref{eqn-CBF} with an arbitrary function $q\in Q$, we obtain
	\begin{align}
		\mathfrak{d}(\bu,q)=0. 
	\end{align}
	
	In summary, we have derived the following hemivariational inequality corresponding to the problem \eqref{eqn-CBF}-\eqref{eqn-boundary-2}: 
	\begin{problem}\label{prob-hemi-1}
		Find $\bu\in\V\cap\L^{r+1}(\mathcal{O})$ and $p\in Q$  such that
		\begin{equation}\label{eqn-hemi-1}
			\left\{
			\begin{aligned}
					\mu \mathfrak{a}(\bu,\bv)+\mathfrak{b}(\bu,\bu,\bv)+\alpha \mathfrak{a}_0(\bu,\bv)&+\beta \mathfrak{c}(\bu,\bv)+\kappa \mathfrak{c}_0(\bu,\bv)+\mathfrak{d}(\bv,p)\\+\int_{\Gamma_1}j^0(\bu_{\tau};\bv_{\tau})\d S&\geq 	\langle\f,\bv\rangle \ \text{ for all }\ \bv\in \V\cap\L^{r+1}(\mathcal{O}),\\
					\mathfrak{d}(\bu,q)&=0\ \text{ for all }\ q\in Q. 
			\end{aligned}\right.
		\end{equation}
	\end{problem}
	
	The unknown variable $p$ can be eliminated, resulting in the following reduced hemivariational inequality: 
	\begin{problem}\label{prob-hemi-2}
		Find $\bu\in\V_{\sigma}\cap\L^{r+1}(\mathcal{O})$ such that 
			\begin{equation}\label{eqn-hemi-2}
			\left\{
			\begin{aligned}
				\mu \mathfrak{a}(\bu,\bv)+\mathfrak{b}(\bu,\bu,\bv)+\alpha \mathfrak{a}_0(\bu,\bv)&+\beta \mathfrak{c}(\bu,\bv)+\kappa \mathfrak{c}_0(\bu,\bv)+\int_{\Gamma_1}j^0(\bu_{\tau};\bv_{\tau})\d S\\&\geq 	\langle\f,\bv\rangle \ \text{ for all }\ \bv\in \V_{\sigma}\cap\L^{r+1}(\mathcal{O}).
			\end{aligned}\right.
		\end{equation}
	\end{problem}
	
	The existence and uniqueness of solutions to Problems \ref{prob-hemi-1} and \ref{prob-hemi-2} will be established in the following section (Theorems \ref{thm-main-hemi}, \ref{thm-unique} and \ref{thm-equivalent}).

	\section{The CBFeD Hemivariational Inequality}\label{sec3}\setcounter{equation}{0}
	
	Our first aim is to study Problem \ref{prob-hemi-2}. We impose the following so-called \emph{smallness condition} for the existence of a solution:
	\begin{align}\label{eqn-cond}
		k_1<2\mu\lambda_0,
	\end{align}
where	the constant $k_1$ comes from the Hypothesis \ref{hyp-sup-j}, while $\lambda_0$ denotes the smallest eigenvalue of the corresponding eigenvalue problem: 
\begin{align}
	\bu\in\V, \ \int_{\mathcal{O}}\boldsymbol{\varepsilon}(\bu):\boldsymbol{\varepsilon}(\bv)\d\x =\lambda\int_{\Gamma_1}\bu_{\tau}\cdot\bv_{\tau}\d S\ \text{ for all }\ \bv\in\V. 
\end{align}
Since the trace operator $\bu\mapsto\bu_{\tau}\big|_{\Gamma_1}$ is compact from $\V$ to $\L^2(\Gamma_1)$ 
 (since $\H^1(\mathcal{O})\hookrightarrow \L^2(\Gamma)$ compactly, see \cite[Theorem 2.21]{SMAOMS}), by using the spectral theory for compact self-adjoint operators, there exists a sequence $\{\lambda_k\}_{k=0}^{\infty}$ such that 
$ \lambda_k > 0,$ $ \lambda_k \to +\infty$.  We also have the following trace inequality:
\begin{align}\label{eqn-trace}
	\|\bv_{\tau}\|_{\L^2(\Gamma_1)}\leq\lambda_0^{-1/2}\|\bv\|_{\V} \ \text{ for all }\ \bv\in\V. 
\end{align}

The result below establishes the existence of a solution to Problem \ref{prob-hemi-2}. It is important to note that solvability is ensured under the assumptions $r \in [1, \infty)$ and $q \in [1, r)$, without requiring any further conditions on the exponent $r$.

\begin{theorem}\label{thm-main-hemi}
	Under Hypothesis \ref{hyp-sup-j} (H1)-(H3) and \eqref{eqn-cond},  there exists a solution $\bu\in\V_{\sigma}\cap\L^{r+1}(\mathcal{O})$ to Problem \ref{prob-hemi-2}. 
\end{theorem}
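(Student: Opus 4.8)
The plan is to reformulate Problem \ref{prob-hemi-2} as a surjectivity problem for a single pseudomonotone, coercive operator on the reflexive Banach space $\X:=\V_{\sigma}\cap\L^{r+1}(\mathcal{O})$ (reflexive as an intersection of the reflexive spaces $\V_{\sigma}$ and $\L^{r+1}(\mathcal{O})$), and then to invoke Theorem \ref{thm-surjective}. First I would collect the smooth terms into the single-valued operator $\mathcal{A}:\X\to\X^{*}$ defined by
$$\langle\mathcal{A}\bu,\bv\rangle=\mu\mathfrak{a}(\bu,\bv)+\mathfrak{b}(\bu,\bu,\bv)+\alpha\mathfrak{a}_0(\bu,\bv)+\beta\mathfrak{c}(\bu,\bv)+\kappa\mathfrak{c}_0(\bu,\bv),$$
and introduce the tangential trace $\gamma:\X\to\L^2(\Gamma_1)$, $\gamma\bv=\bv_{\tau}|_{\Gamma_1}$ (compact, as noted before \eqref{eqn-trace}), together with the multivalued boundary operator $\mathcal{N}(\bu):=\gamma^{*}\partial J(\gamma\bu)$, where $J$ is the functional in \eqref{eqn-J-def}. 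Setting $\mathcal{G}:=\mathcal{A}+\mathcal{N}$, I would show that if $\bu$ solves the inclusion $\f\in\mathcal{G}(\bu)$, say $\f=\mathcal{A}\bu+\gamma^{*}\boldsymbol{\xi}$ with $\boldsymbol{\xi}\in\partial J(\gamma\bu)$, then testing with $\bv\in\X$ and using $(\boldsymbol{\xi},\gamma\bv)_{\L^2(\Gamma_1)}\le J^0(\gamma\bu;\gamma\bv)\le\int_{\Gamma_1}j^0(\bu_\tau;\bv_\tau)\,\d S$ (Proposition \ref{prop-clarke}(2) and Lemma \ref{lem-hemi-var}(3)) recovers \eqref{eqn-hemi-2}. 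Hence it suffices to prove that $\mathcal{G}$ is pseudomonotone and coercive.

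Next I would show $\mathcal{A}$ and $\mathcal{N}$ are each pseudomonotone and combine them by \cite[Proposition 1.3.68]{ZdSmP}. For $\mathcal{N}$, Proposition \ref{prop-suff-pseudo} applies using only (H1)--(H3): its values are nonempty, closed, convex (Proposition \ref{prop-clarke}(1)) and bounded (Lemma \ref{lem-hemi-var}(2)), and if $\bu_n\rightharpoonup\bu$ in $\X$ with $\boldsymbol{\xi}_n\in\partial J(\gamma\bu_n)$, the compactness of $\gamma$ gives $\gamma\bu_n\to\gamma\bu$ in $\L^2(\Gamma_1)$, so the upper semicontinuity/closed graph of $\partial J$ (Proposition \ref{prop-ups}) identifies the weak limit $\boldsymbol{\xi}\in\partial J(\gamma\bu)$ and the weak-strong pairing gives $(\boldsymbol{\xi}_n,\gamma\bu_n)\to(\boldsymbol{\xi},\gamma\bu)$; notably (H4) is not needed here. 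For $\mathcal{A}$, boundedness follows from \eqref{eqn-a-est-1}, \eqref{eqn-b-est-1} and the growth of $\mathfrak{c},\mathfrak{c}_0$. Given $\bu_n\rightharpoonup\bu$ in $\X$ with $\limsup_n\langle\mathcal{A}\bu_n,\bu_n-\bu\rangle\le0$, the compact embeddings $\V\hookrightarrow\L^4(\mathcal{O})$ and $\V\hookrightarrow\L^2(\mathcal{O})$ ($d\in\{2,3\}$) give $\bu_n\to\bu$ in $\L^4(\mathcal{O})$ and a.e. in $\mathcal{O}$ along a subsequence; then \eqref{eqn-b-est-3}--\eqref{eqn-b-est-4} handle the convective term and the Brezis-Lions Lemma \ref{Lem-Lions} yields $|\bu_n|^{r-1}\bu_n\rightharpoonup|\bu|^{r-1}\bu$ and $|\bu_n|^{q-1}\bu_n\rightharpoonup|\bu|^{q-1}\bu$, identifying the weak limit of $\mathcal{A}\bu_n$ as $\mathcal{A}\bu$.

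The remaining energy convergence for $\mathcal{A}$ I would obtain by upgrading to strong convergence $\bu_n\to\bu$ in $\X$, which is the crux. After the convective contribution is removed (it tends to $0$), the $\limsup$ hypothesis balances $2\mu\|\bu_n-\bu\|_{\V}^2$ and the monotone Forchheimer term, bounded below by $\frac{\beta}{2^{r-1}}\|\bu_n-\bu\|_{\L^{r+1}}^{r+1}$ (by \eqref{Eqn-mon-lip}), against the nonmonotone pumping term. Since $q<r$, the latter's difference is controlled, after a Hölder splitting and interpolation between $\L^2(\mathcal{O})$ and $\L^{r+1}(\mathcal{O})$, by $\epsilon\|\bu_n-\bu\|_{\L^{r+1}}^{r+1}+C_\epsilon\|\bu_n-\bu\|_{\L^2}^2$, the last factor vanishing by the compact embedding $\V\hookrightarrow\L^2(\mathcal{O})$; absorbing it forces $\|\bu_n-\bu\|_{\V}\to0$ and $\|\bu_n-\bu\|_{\L^{r+1}}\to0$, whence the energy convergence. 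With $\mathcal{G}=\mathcal{A}+\mathcal{N}$ pseudomonotone, I turn to coercivity: testing with $\bu$ and using \eqref{eqn-b-est-4}, \eqref{eqn-a-est-2}, \eqref{eqn-c-est-1}, Lemma \ref{lem-hemi-var}(2) and the trace inequality \eqref{eqn-trace},
$$\inf_{\boldsymbol{\xi}\in\partial J(\gamma\bu)}\langle\mathcal{A}\bu+\gamma^{*}\boldsymbol{\xi},\bu\rangle\ge\Big(2\mu-\tfrac{k_1}{\lambda_0}\Big)\|\bu\|_{\V}^2+\alpha\|\bu\|_{\L^2}^2+\beta\|\bu\|_{\L^{r+1}}^{r+1}+\kappa\|\bu\|_{\L^{q+1}}^{q+1}-\tfrac{k_0|\Gamma_1|^{1/2}}{\lambda_0^{1/2}}\|\bu\|_{\V}.$$
The smallness condition \eqref{eqn-cond} gives $2\mu-k_1/\lambda_0>0$, and since $q<r$ Young's inequality (on the bounded domain) absorbs $\kappa\|\bu\|_{\L^{q+1}}^{q+1}$ into $\beta\|\bu\|_{\L^{r+1}}^{r+1}$; hence $\langle\mathcal{G}(\bu),\bu\rangle/\|\bu\|_{\X}\to\infty$ and $\mathcal{G}$ is coercive. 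Theorem \ref{thm-surjective} then yields $\f\in R(\mathcal{G})=\X^{*}$, completing the proof.

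The main obstacle I anticipate lies in the pumping term $\kappa\mathfrak{c}_0$: unlike the Forchheimer term it is not monotone, and for large $q$ in three dimensions it exhibits super-Sobolev growth, so it is neither a compact perturbation nor sign-definite in the energy balance. Controlling it requires simultaneously the Brezis-Lions Lemma \ref{Lem-Lions} (to identify its weak limit from a.e. convergence) and the interpolation-absorption estimate above (to keep it subordinate to the Forchheimer dissipation), and this is precisely where the restriction $q<r$ is indispensable, both in the strong-convergence argument and in the coercivity bound. A secondary but essential point is that the pseudomonotonicity of the boundary operator $\mathcal{N}$ is established here under only (H1)--(H3), relying entirely on the compactness of the tangential trace and the closed-graph property of the Clarke subdifferential, rather than on the relaxed monotonicity (H4), which is reserved for the uniqueness result.
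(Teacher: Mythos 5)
Your proposal is correct and follows the same overall architecture as the paper: recast Problem \ref{prob-hemi-2} as the inclusion $\f\in\mathcal{F}(\bu)+\gamma^{*}\partial J(\gamma\bu)$ on $\V_{\sigma}\cap\L^{r+1}$, treat the boundary operator exactly as the paper does (Proposition \ref{prop-suff-pseudo}, compactness of the tangential trace, closed graph of $\partial J$, using only (H1)--(H3)), prove coercivity with the same trace estimate and absorption of the pumping term, and conclude by Theorem \ref{thm-surjective}. Where you genuinely diverge is in the pseudomonotonicity of the PDE part. The paper splits it as $\mathcal{F}=\wi{\mathcal{F}}+\B$, shows $\wi{\mathcal{F}}+\wi{\varrho}\,\I$ is monotone (which is why its Part 3 carries the case analysis in $r$ and the constants $\varrho_{2,r},\varrho_{3,r}$), verifies Definition \ref{def-pseudo}(2) for $\wi{\mathcal{F}}$ via this perturbed monotonicity together with the Brezis--Lions lemma, and then adds back the convective term by compactness. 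You instead keep the convective term inside the operator and upgrade the $\limsup$ hypothesis to \emph{strong} convergence of $\bu_n$ in $\V\cap\L^{r+1}$, by bounding the Forchheimer difference below via \eqref{Eqn-mon-lip} and absorbing the pumping difference through H\"older/interpolation between $\L^2$ and $\L^{r+1}$ (where $q<r$ is exactly what makes the absorption possible), with the convective and $\L^2$ remainders killed by the compact embeddings $\V\hookrightarrow\L^4,\L^2$. This is arguably cleaner: it avoids the three-case local-monotonicity analysis entirely for the existence proof (the paper reuses those estimates later for uniqueness, so they are not wasted there), and once you have strong convergence the weak-limit identification and energy convergence are immediate, making the Brezis--Lions step redundant. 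One cosmetic imprecision: your interpolation produces $C_\epsilon\|\bu_n-\bu\|_{\L^2}^{2(1-\theta)\frac{r+1}{r+1-2\theta}}$ rather than exactly $C_\epsilon\|\bu_n-\bu\|_{\L^2}^{2}$, but since this factor vanishes by compactness the conclusion is unaffected; the paper's own estimate \eqref{eqn-est-c0-2} achieves the exact $\L^2$ power by a different H\"older splitting against $\int|\bu|^{r-1}|\bu-\bv|^2\d\x$.
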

\begin{proof}
	Let us define the linear operator $\mathcal{A}:\V\to\V^*$ by 
	\begin{align*}
		\langle\mathcal{A}\bu,\bv\rangle=\mathfrak{a}(\bu,\bv),\  \bu,\bv\in\V, 
	\end{align*}
the bilinear operator  $\mathcal{B}:\V\to\V^*$ by 
	\begin{align*}
		\langle\B(\bu,\bv),\bw\rangle=\mathfrak{b}(\bu,\bv,\bw),\ \bu,\bv,\bw\in\V,
	\end{align*}
the nonlinear  operators $\mathcal{C}, \mathcal{C}_0:\L^{r+1}\to\L^{\frac{r+1}{r}}$ by 
	\begin{align*}
		\langle\mathcal{C}(\bu),\bv\rangle=\mathfrak{c}(\bu,\bv), \	\langle\mathcal{C}_0(\bu),\bv\rangle=\mathfrak{c}_0(\bu,\bv),  \ \bu,\bv\in\L^{r+1}. 
	\end{align*}
	\vskip 0.1cm
	\noindent 
	\textbf{Step 1:} \emph{Coercivity and pseudomonotonicity of the operator $\mathcal{F}$.} 
	Since $\V_{\sigma}$ is a subspace of $\V$,	we will show that the operator $\mathcal{F}:\V_{\sigma}\cap\L^{r+1}\to\V_{\sigma}^*+\L^{\frac{r+1}{r}}$ defined by 
	\begin{align}
		\mathcal{F}(\bu):=\mu\A\bu+\B(\bu)+\alpha\bu+\beta\C(\bu)+\kappa\mathcal{C}_0(\bu),\ \bu\in \V_{\sigma}\cap\L^{r+1},
	\end{align}
	is coercive and pseudomonotone. 	The proof is divided into the following parts:
	\vskip 0.1cm
	\noindent 
	\underline{\emph{Part 1.} \emph{$\mathcal{F}:\V_{\sigma}\cap\L^{r+1}\to\V_{\sigma}^*+\L^{\frac{r+1}{r}}$ is coercive.}}
	Let us first prove the coerciveness of the operator $\mathcal{F}(\bu):=\mu\A\bu+\B(\bu)+\alpha\bu+\beta\mathcal{C}(\bu)$. We infer from \eqref{eqn-a-est-1}, \eqref{eqn-b-est-4}, and \eqref{eqn-c-est-1} that 
	\begin{align}\label{eqn-coer}
		\langle\mathcal{F}(\bu),\bu\rangle =2\mu\|\bu\|_{\V}^2+\alpha\|\bu\|_{\H}^2+\beta\|\bu\|_{\L^{r+1}}^{r+1}+\kappa\|\bu\|_{\L^{q+1}}^{q+1}. 
	\end{align}
	Using H\"older's and Young's inequalities, we find 
	\begin{align}\label{eqn-pump-est}
	\kappa	\|\bu\|_{\L^{q+1}}^{q+1}&=|\kappa|\int_{\mathcal{O}}|\bu(\x)|^{q+1}\d\x\leq|\kappa||\mathcal{O}|^{\frac{r-q}{r+1}}\bigg(\int_{\mathcal{O}}|\bu(\x)|^{r+1}\d\x\bigg)^{\frac{q+1}{r+1}}\nonumber\\&=|\kappa||\mathcal{O}|^{\frac{r-q}{r+1}}\|\bu\|_{\L^{r+1}}^{q+1}\leq\frac{\beta}{2}\|\bu\|_{\L^{r+1}}^{r+1}+|\kappa|^{\frac{r+1}{r-q}}|\mathcal{O}|, 
	\end{align}
where $ |\mathcal{O}|$ denotes the Lebesgue measure of $\mathcal{O}$. 	We defined an equivalent  norm in $\V_{\sigma}\cap\L^{r+1}$ as $\sqrt{\|\bu\|_{\V}^2+\|\bu\|_{\L^{r+1}}^{2}}$. Therefore, using \eqref{eqn-pump-est} in \eqref{eqn-coer}, we obtain  (\cite{SGMTM})
	\begin{align}\label{eqn-coercive-1}
		\frac{\langle\mathcal{F}(\bu),\bu\rangle}{\|\bu\|_{\V\cap\L^{r+1}}}&\geq \frac{2\mu\|\bu\|_{\V}^2+\alpha\|\bu\|_{\H}^2+\frac{\beta}{2}\|\bu\|_{\L^{r+1}}^{r+1}-|\kappa|^{\frac{r+1}{r-q}}|\mathcal{O}|}{\sqrt{\|\bu\|_{\V}^2+\|\bu\|_{\L^{r+1}}^{2}}}\nonumber\\&\geq\frac{2\min\{\mu,\beta\}\left(\|\bu\|_{\V}^2+\|\bu\|_{\L^{r+1}}^2-1\right)-|\kappa|^{\frac{r+1}{r-q}}|\mathcal{O}|}{\sqrt{\|\bu\|_{\V}^2+\|\bu\|_{\L^{r+1}}^{2}}},
	\end{align}
	where we have used the fact that $x^2\leq x^{r+1} + 1,$ for all $x\geq 0$ and $r\geq 1$. Finally, we deduce 
	\begin{align*}
		\lim\limits_{\|\bu\|_{\V\cap\L^{r+1}}\to\infty}	\frac{\langle\mathcal{F}(\bu),\bu\rangle}{\|\bu\|_{\V\cap\L^{r+1}}}=\infty,
	\end{align*}
	so that the operator $\mathcal{F}:\V_{\sigma}\cap\L^{r+1}\to \V_{\sigma}^*+\L^{\frac{r+1}{r}}$ is coercive. 
	
	\vskip 0.1cm
	\noindent 
\underline{\emph{Part 2.} \emph{Boundedness of $\mathcal{F}:\V_{\sigma}\cap\L^{r+1}\to\V_{\sigma}^{*}+\L^{\frac{r+1}{r}}$.}}
	In order to prove the pseudomonotone property of the operator $\mathcal{F}:\V_{\sigma}\cap\L^{r+1}\to \V_{\sigma}^{*}+\L^{\frac{r+1}{r}}$, we first show the boundedness of $\mathcal{F}$ (Definition \ref{def-pseudo}-(1)). Using \eqref{eqn-a-est-1}, we have  for all $\bu,\bv\in\V_{\sigma}$ 
	\begin{align*}
		|\langle\mathcal{A}\bu,\bv\rangle|&=|\mathfrak{a}(\bu,\bv)|\leq 2\|\bu\|_{\V}\|\bv\|_{\V}. 
	\end{align*}
	The bound \eqref{eqn-b-est-1} implies  for all $\bu,\bv\in\V$  that 
	\begin{align*}
		|\langle\B(\bu),\bv\rangle|\leq C_b\|\bu\|_{\V}^2\|\bv\|_{\V}. 
	\end{align*}
	Moreover, an application of H\"older's inequality yields for all $\bu,\bv\in\L^{r+1}$  that 
	\begin{align*}
		|\langle\mathcal{C}(\bu),\bv\rangle|&\leq\|\bu\|_{\L^{r+1}}^r\|\bv\|_{\L^{r+1}},\\
		|\langle\mathcal{C}_0(\bu),\bv\rangle|&\leq\|\bu\|_{\L^{q+1}}^q\|\bv\|_{\L^{q+1}}\leq|\mathcal{O}|^{\frac{r-q}{r+1}}\|\bu\|_{\L^{r+1}}^q\|\bv\|_{\L^{r+1}}. 
	\end{align*}
	Therefore, for all $\bu\in\V_{\sigma}\cap\L^{r+1}$, we have 
	\begin{align*}
		\sup_{\|\bv\|_{\V\cap\L^{r+1}}\leq 1}|\langle\mathcal{F}(\bu),\bv\rangle|\leq \left(2\mu\|\bu\|_{\V}+C\alpha\|\bu\|_{\H}+C_b\|\bu\|_{\V}^2+\beta\|\bu\|_{\L^{r+1}}^r +|\mathcal{O}|^{\frac{r-q}{r+1}}\|\bu\|_{\L^{r+1}}^q\right)<\infty, 
	\end{align*}
	and 	the boundedness of the operator $\mathcal{F}:\V_{\sigma}\cap\L^{r+1}\to \V_{\sigma}^{*}+\L^{\frac{r+1}{r}}$ follows.
	
		\vskip 0.1cm
	\noindent 
	\underline{\emph{Part 3.} \emph{Local monotonicity of the operator  $\mathcal{F}:\V_{\sigma}\cap\L^{r+1}\to\V_{\sigma}^{*}+\L^{\frac{r+1}{r}}$.}} We divide the proof into the following cases:  
	
\noindent \textbf{Case 1:} \emph{$d\in\{2,3\}$ with $r\in(3,\infty)$.} 
	We first consider the supercritical  case $d\in\{2,3\}$  with $3<r<\infty$. 	Using \eqref{eqn-a-est-2}, we infer for all $\bu,\bv\in\V_{\sigma}$ that 
	\begin{align}\label{eqn-aes}
		\mu\langle\A(\bu-\bv),\bu-\bv\rangle=\mu \mathfrak{a}(\bu-\bv,\bu-\bv)=2\mu\|\bu-\bv\|_{\V}^2. 
	\end{align}
	From \eqref{2.23}, we have  
	\begin{align}\label{eqn-ces}
		&\langle\mathcal{C}(\bu)-\mathcal{C}(\bv),\bu-\bv\rangle\geq \frac{1}{2}\||\bu|^{\frac{r-1}{2}}(\bu-\bv)\|_{\H}^2+\frac{1}{2}\||\bv|^{\frac{r-1}{2}}(\bu-\bv)\|_{\H}^2. 
	\end{align}
	Let us now estimate the term $|\langle\B(\bu)-\B(\bv),\bu-\bv\rangle|$ using  \eqref{eqn-b-est-4},  \eqref{eqn-equiv-1},  H\"older's and Young's inequalities as 
	\begin{align}\label{eqn-bdes}
	&	|	\langle\B(\bu)-\B(\bv),\bu-\bv\rangle|\nonumber\\&\leq |\langle \B(\bu,\bu-\bv),\bu-\bv\rangle|+ |\langle \B(\bu-\bv,\bv),\bu-\bv\rangle|\nonumber\\&=|\langle \B(\bu-\bv,\bv),\bu-\bv\rangle|\leq\|\nabla(\bu-\bv)\|_{\H}\|\bv(\bu-\bv)\|_{\H}\nonumber\\&\leq C_k\|\bu-\bv\|_{\V}\|\bv(\bu-\bv)\|_{\H}\leq\mu\|\bu-\bv\|_{\V}^2+\frac{C_k^2}{4\mu}\|\bv(\bu-\bv)\|_{\H}^2. 
	\end{align}
	For $r>3$, by using H\"older's and Young's inequalities, we estimate $\frac{C_k^2}{2\mu}\|\bv(\bu-\bv)\|_{\H}^2$  as (\cite[Theorem 2.5]{SGMTM}, \cite{MT2})
	\begin{align}\label{eqn-des}
	\frac{C_k^2}{4\mu}	\|\bv(\bu-\bv)\|_{\H}^2&=	\frac{C_k^2}{4\mu}\int_{\mathcal{O}}|\bv(\x)|^2|\bu(\x)-\bv(\x)|^2\d \x\nonumber\\&=\frac{C_k^2}{4\mu}\int_{\mathcal{O}}|\bv(\x)|^2|\bu(\x)-\bv(\x)|^{\frac{4}{r-1}}|\bu(\x)-\bv(\x)|^{\frac{2(r-3)}{r-1}}\d \x\nonumber\\&\leq\frac{C_k^2}{4\mu}\left(\int_{\mathcal{O}}|\bv(\x)|^{r-1}|\bu(\x)-\bv(\x)|^2\d \x\right)^{\frac{2}{r-1}}\left(\int_{\mathcal{O}}|\bu(\x)-\bv(\x)|^2\d \x\right)^{\frac{r-3}{r-1}}\nonumber\\&\leq\frac{\beta}{4}\int_{\mathcal{O}}|\bv(\x)|^{r-1}|\bu(\x)-\bv(\x)|^2\d \x+\varrho_{1,r}\int_{\mathcal{O}}|\bu(\x)-\bv(\x)|^2\d \x,
	\end{align}
	where 
	\begin{align}\label{eqn-rho-1}
		\varrho_{1,r}=\left(\frac{C_k^2}{4\mu}\right)^{\frac{r-1}{r-3}}\left(\frac{r-3}{r-1}\right)\left(\frac{8}{\beta (r-1)}\right)^{\frac{2}{r-3}},
	\end{align}
and $C_k$ is defined in \eqref{eqn-equiv-1}. Using \eqref{eqn-des} in \eqref{eqn-bdes}, we find 
	\begin{align}\label{eqn-bes}
		|	\langle\B(\bu)-\B(\bv),\bu-\bv\rangle|&\leq\mu\|\bu-\bv\|_{\V}^2+\frac{\beta}{4}\||\bv|^{\frac{r-1}{2}}(\bu-\bv)\|_{\H}^2+\varrho_{1,r}\|\bu-\bv\|_{\H}^2.
	\end{align}
	An application of  Taylor's formula (\cite[Theorem 7.9.1]{PGC}) yields 
	\begin{align}\label{eqn-est-c0-1}
	|	\kappa||\langle\mathcal{C}_0(\bu)-\mathcal{C}_0(\bv),\bu-\bv\rangle|&= |\kappa|\bigg|\bigg<\int_0^1\mathcal{C}_0^{\prime}(\theta\bu+(1-\theta)\bv)\d\theta(\bu-\bv),(\bu-\bv)\bigg>\bigg|\nonumber\\&\leq |\kappa| q2^{q-1}\bigg<\int_0^1|\theta\bu+(1-\theta)\bv|^{q-1}\d\theta|\bu-\bv|,|\bu-\bv|\bigg>\nonumber\\&\leq |\kappa|q2^{q-1}\left<\left(|\bu|^{q-1}+|\bv|^{q-1}\right)|\bu-\bv|,|\bu-\bv|\right>\nonumber\\&=   |\kappa| q2^{q-1}\||\bu|^{\frac{q-1}{2}}(\bu-\bv)\|_{\H}^2+ |\kappa| q2^{q-1}\||\bv|^{\frac{q-1}{2}}(\bu-\bv)\|_{\H}^2. 
	\end{align}
	Using H\"older's inequality, we estimate $ |\kappa| q2^{q-1}\||\bu|^{\frac{q-1}{2}}(\bu-\bv)\|_{\H}^2$ as 
	\begin{align}\label{eqn-est-c0-2}
		& |\kappa| q2^{q-1}\||\bu|^{\frac{q-1}{2}}(\bu-\bv)\|_{\H}^2\nonumber\\&= |\kappa| q2^{q-1}\int_{\mathcal{O}}|\bu(\x)|^{q-1}|\bu(\x)-\bv(\x)|^2\d\x \nonumber\\&= |\kappa| q2^{q-1}\int_{\mathcal{O}}|\bu(\x)|^{q-1}|\bu(\x)-\bv(\x)|^{\frac{2(q-1)}{r-1}}|\bu(\x)-\bv(\x)|^{\frac{2(r-q)}{r-1}}\d\x \nonumber\\&\leq |\kappa| q2^{q-1}\bigg(\int_{\mathcal{O}}|\bu(\x)|^{r-1}|\bu(\x)-\bv(\x)|^2\d\x\bigg)^{\frac{q-1}{r-1}}\bigg(\int_{\mathcal{O}}|\bu(\x)-\bv(\x)|^2\d\x\bigg)^{\frac{r-q}{r-1}}\nonumber\\&\leq\frac{\beta}{2}\int_{\mathcal{O}}|\bu(\x)|^{r-1}|\bu(\x)-\bv(\x)|^2\d\x+\varrho_{2,r}\int_{\mathcal{O}}|\bu(\x)-\bv(\x)|^2\d\x,
	\end{align}
	where 
	\begin{align}\label{eqn-rho-2}
		\varrho_{2,r}=\left(\frac{r-q}{r-1}\right)\left(\frac{2(q-1)}{\beta(r-1)}\right)^{\frac{q-1}{r-q}}\left( |\kappa| q2^{q-1}\right)^{\frac{r-1}{r-q}}. 
	\end{align}
	A similar calculation yields 
	\begin{align}\label{eqn-est-c0-3}
	& |\kappa| q2^{q-1}\||\bv|^{\frac{q-1}{2}}(\bu-\bv)\|_{\H}^2\nonumber\\&\leq\frac{\beta}{4}\int_{\mathcal{O}}|\bv(\x)|^{r-1}|\bu(\x)-\bv(\x)|^2\d\x+\varrho_{3,r}\int_{\mathcal{O}}|\bu(\x)-\bv(\x)|^2\d\x,
\end{align}
where 
	\begin{align}\label{eqn-rho-3}
	\varrho_{3,r}=\left(\frac{r-q}{r-1}\right)\left(\frac{4(q-1)}{\beta(r-1)}\right)^{\frac{q-1}{r-q}}\left( |\kappa| q2^{q-1}\right)^{\frac{r-1}{r-q}}. 
\end{align}
Using \eqref{eqn-est-c0-2} and \eqref{eqn-est-c0-3} in \eqref{eqn-est-c0-1}, we deduce 
\begin{align}\label{eqn-est-c0-4}
	|\kappa||\langle\mathcal{C}_0(\bu)-\mathcal{C}_0(\bv),\bu-\bv\rangle|&\leq\frac{\beta}{2}\||\bu|^{\frac{r-1}{2}}(\bu-\bv)\|_{\L^2}^2+\frac{\beta}{4}\||\bv|^{\frac{r-1}{2}}(\bu-\bv)\|_{\L^2}^2\nonumber\\&\quad+(\varrho_{2,r}+\varrho_{3,r})\|\bu-\bv\|_{\H}^2. 
\end{align}
	Combining \eqref{eqn-aes}, \eqref{eqn-ces}, \eqref{eqn-bes} and \eqref{eqn-est-c0-4}, we finally get for $r>3$ 
	\begin{align}\label{eqn-final-est}
		\langle\mathcal{F}(\bu)-\mathcal{F}(\bv),\bu-\bv\rangle+\varrho_{r}\|\bu-\bv\|_{\H}^2\geq\mu\|\bu-\bv\|_{\V}^2+\alpha\|\bu-\bv\|_{\H}^2\geq 0,
	\end{align}
	where $\varrho_r=\varrho_{1,r}+\varrho_{2,r}+\varrho_{3,r}$, and $\varrho_{1,r}$, $\varrho_{2,r}$ and $\varrho_{3,r}$ are defined in \eqref{eqn-rho-1}, \eqref{eqn-rho-2} and \eqref{eqn-rho-3}, respectively.

\noindent	\textbf{Case 2:} \emph{$d\in\{2,3\}$ with $r=3$ ($2\beta\mu\geq 1$).} 
			For the critical case $r=3$ ($d\in\{2,3\}$)  with $2\beta\mu \geq 1$, the operator $\F(\cdot):\V_{\sigma}\cap\L^{4}\to \V_{\sigma}^{*}+\L^{\frac{4}{3}}$ satisfies,  for all $\bu,\bv\in\V_{\sigma}$
	\begin{align}\label{218}
		\langle\F(\bu)-\F(\bv),\bu-\bv\rangle+(\varrho_{2,3}+\varrho_{3,3})\|\bu-\bv\|_{\H}^2\geq 0.
		\end{align}
	From \eqref{2.23}, we infer
	\begin{align}\label{231}
		\beta\langle\mathcal{C}(\bu)-\mathcal{C}(\bv),\bu-\bv\rangle\geq\frac{\beta}{2}\|\bu(\bu-\bv)\|_{\H}^2+\frac{\beta}{2}\|\bv(\bu-\bv)\|_{\H}^2. 
	\end{align}
	We estimate $|\langle\B(\bu-\bv,\bu-\bv),\bv\rangle|$ using H\"older's and Young's inequalities as 
	\begin{align}\label{232}
		|\langle\B(\bu-\bv,\bu-\bv),\bv\rangle|\leq\|\bv(\bu-\bv)\|_{\H}\|\bu-\bv\|_{\V} \leq\frac{1}{\beta} \|\bu-\bv\|_{\V}^2+\frac{\beta}{4 }\|\bv(\bu-\bv)\|_{\H}^2.
	\end{align}
Combining \eqref{eqn-aes}, \eqref{eqn-est-c0-4}, \eqref{231} and \eqref{232}, we deduce 
	\begin{align}\label{eqn-critical}
	&	\langle\F(\bu)-\F(\bv),\bu-\bv\rangle+(\varrho_{2,3}+\varrho_{3,3})\|\bu-\bv\|_{\H}^2\nonumber\\&\geq \left(2\mu-\frac{1}{\beta}\right)\|\bu-\bv\|_{\V}^2+\alpha\|\bu-\bv\|_{\H}^2+ \frac{\beta}{4}\|\bu(\bu-\bv)\|_{\H}^2\geq 0,
	\end{align}
	provided $2\beta\mu \geq 1$. 
	
\noindent		\textbf{Case 3:} \emph{$d\in\{2,3\}$ with $r\in[1,3]$.} 
 For this case, one can estimate $|\langle\B(\bu-\bv,\bu-\bv),\bv\rangle|$ using \eqref{eqn-equiv-1}, H\"older's, Gagliardo-Nirenberg's and Young's inequalities  as 
	\begin{align}\label{2.21}
		|\langle\B(\bu-\bv,\bu-\bv),\bv\rangle|&\leq \|\nabla(\bu-\bv)\|_{\H}\|\bu-\bv\|_{\L^4}\|\bv\|_{\L^4}
		\nonumber\\&\leq C_gC_k\|\bu-\bv\|_{\V}\|\bu-\bv\|_{\H}^{1-\frac{d}{4}}\|\bu-\bv\|_{\H^1}^{\frac{d}{4}}\|\bv\|_{\L^4}
		\nonumber\\&\leq C_gC_k\|\bu-\bv\|_{\H}^{1-\frac{d}{4}}\|\bu-\bv\|_{\V}^{1+\frac{d}{4}}\|\bv\|_{\L^4}\nonumber\\&\leq \mu\|\bu-\bv\|_{\V}^2+\varrho_4\|\bv\|_{\L^4}^{\frac{8}{4-d}}\|\bu-\bv\|_{\H}^2,
	\end{align}
	where 
	\begin{align}
		\varrho_4=C_g^{\frac{8}{4-d}}C_k^{\frac{8}{4-d}}\left(\frac{8}{4-d}\right)\left(\frac{4+d}{8\mu}\right)^{\frac{4+d}{4-d}}.
	\end{align}
	Combining \eqref{eqn-aes}, \eqref{eqn-ces},  \eqref{eqn-est-c0-4}, and \eqref{2.21}, we obtain 
	\begin{align}\label{eqn-diff}
		\langle\F(\bu)-\F(\bv),\bu-\bv\rangle+\varrho_4 \|\bv\|_{\L^4}^{\frac{8}{4-d}}\|\bu-\bv\|_{\H}^2 +(\varrho_{2,r}+\varrho_{3,r})\|\bu-\bv\|_{\H}^2\geq 0,
	\end{align}
	which implies 
	\begin{align}\label{fe2}
		\langle\F(\bu)-\F(\bv),\bu-\bv\rangle+\left(\varrho_4N^{\frac{8}{4-d}}+\varrho_{2,r}+\varrho_{3,r}\right)\|\bu-\bv\|_{\H}^2\geq 0,
	\end{align}
	for all $\bv\in{\mathbb{B}}_N$, where ${\mathbb{B}}_N$ is an $\L^4$-ball of radius $N$, that is,
	$
	{\mathbb{B}}_N:=\big\{\z\in\L^4:\|\z\|_{\L^4}\leq N\big\}.
	$ Thus, the operator $\F(\cdot)$ is locally monotone in this case. 
	

	\vskip 0.1cm
	\noindent 
	\underline{\emph{Part 4.} \emph{Pseudomonotonicity of the operator $\wi{\mathcal{F}}(\bu):=\mu\A\bu+\alpha\bu+\beta\mathcal{C}(\bu)+\kappa\mathcal{C}_0(\bu).$}}
Let us now demonstrate that the operator
$$
\widetilde{\mathcal{F}}(\cdot) := \mu \mathcal{A} + \alpha \mathrm{I} + \beta \mathcal{C}(\cdot)+\kappa\mathcal{C}_0(\cdot): \V_\sigma \cap \L^{r+1} \to \V_\sigma^* + \L^{\frac{r+1}{r}}
$$
is pseudomonotone. To this end, consider a sequence $\{\bu_n\}_{n \in \mathbb{N}} \subset \V_\sigma \cap \L^{r+1}$ such that
$$
\bu_n \rightharpoonup \bu\  \text{ in }\  \V_\sigma \cap \L^{r+1} \quad \text{and} \quad \limsup_{n \to \infty} \langle \widetilde{\mathcal{F}}(\bu_n), \bu_n - \bu \rangle \leq 0.
$$
By the definition of the norm in the space $\V_\sigma \cap \L^{r+1}$, it follows directly that
$$
\bu_n \rightharpoonup \bu \ \text{ in }\  \V_\sigma \ \text{ and } \  \bu_n \rightharpoonup \bu\  \text{ in }\  \L^{r+1}.
$$
Since weak convergence implies boundedness, the sequence $\{\bu_n\}$ is uniformly bounded in $\V_\sigma \cap \L^{r+1}$. 
Due to the compact embedding $\V_\sigma \hookrightarrow \H$, there exists a (not relabeled) subsequence such that
\begin{align}\label{eqn-strong-1}
\bu_n \to \bu \ \text{ strongly in }   \ \H,
\end{align}
and, possibly along a further subsequence,
\begin{align}\label{eqn-strong-2}
\bu_n(\x) \to \bu(\x) \ \text{ for a.e. } \ \x \in \mathcal{O}.
\end{align}
The operator $\widetilde{\mathcal{F}}+\widetilde{\varrho}\I$ is monotone as shown through its individual components (see \eqref{eqn-aes}, \eqref{eqn-ces}, and \eqref{eqn-est-c0-4}), so we obtain
$$
\langle \widetilde{\mathcal{F}}(\bu_n) - \widetilde{\mathcal{F}}(\bu), \bu_n - \bu \rangle+\widetilde\varrho\|\bu_n-\bu\|_{\H}^2 \geq 0,
$$
where $\widetilde\varrho=\varrho_{2,r}+\varrho_{3,r}$, which yields
$$
\langle \widetilde{\mathcal{F}}(\bu_n), \bu_n - \bu \rangle+\widetilde\varrho\|\bu_n-\bu\|_{\H}^2  \geq \langle \widetilde{\mathcal{F}}(\bu), \bu_n - \bu \rangle.
$$
Taking the limit infimum as $n \to \infty$ and using the strong convergence given in \eqref{eqn-strong-1}, we arrive at
$$
\liminf_{n \to \infty} \langle \widetilde{\mathcal{F}}(\bu_n), \bu_n - \bu \rangle \geq \liminf_{n \to \infty} \langle \widetilde{\mathcal{F}}(\bu), \bu_n - \bu \rangle = 0,
$$
where the last equality follows from the weak convergence of $\bu_n$ to $\bu$. Combining this with the original assumption,
$$
\limsup_{n \to \infty} \langle \widetilde{\mathcal{F}}(\bu_n), \bu_n - \bu \rangle \leq 0,
$$
we conclude that
$$
\lim_{n \to \infty} \langle \widetilde{\mathcal{F}}(\bu_n), \bu_n - \bu \rangle = 0.
$$
Next, we prove that $\widetilde{\mathcal{F}}(\bu_n) \rightharpoonup \widetilde{\mathcal{F}}(\bu)$ in $\V_\sigma^* + \L^{\frac{r+1}{r}}$. For any test function $\bv \in \V_\sigma \cap \L^{r+1}$, consider
\begin{align}\label{eqn-conv-est-1}
\langle \widetilde{\mathcal{F}}(\bu_n) - \widetilde{\mathcal{F}}(\bu), \bv \rangle &= \mu \mathfrak{a}(\bu_n - \bu, \bv) + \alpha(\bu_n - \bu, \bv) + \beta \langle \mathcal{C}(\bu_n) - \mathcal{C}(\bu), \bv \rangle\nonumber\\&\quad+\kappa\langle \mathcal{C}_0(\bu_n) - \mathcal{C}_0(\bu), \bv \rangle.
\end{align}
The first two terms on the right-hand side converge to zero due to the weak convergence in $\V$
and the strong convergence in $\H$, respectively. For the nonlinear terms involving $\mathcal{C}$ and $\mathcal{C}_0$, observe that from almost everywhere convergence of $\bu_n$ to $\bu$ (see \eqref{eqn-strong-2}), we get
$$
|\bu_n(\x)|^{r-1} \bu_n(\x) \to |\bu(\x)|^{r-1} \bu(\x) \quad \text{a.e. } \x \in \mathcal{O}.
$$
The uniform boundedness of $\|\mathcal{C}(\bu_n)\|_{\L^{\frac{r+1}{r}}}$ (since $\|\mathcal{C}(\bu_n)\|_{\L^{\frac{r+1}{r}}}=\|\bu_n\|_{\L^{r+1}}^r\leq C$) implies, via the generalized Lebesgue dominated convergence theorem (Brezis-Lions Lemma, see Lemma \ref{Lem-Lions}), that
$$
\mathcal{C}(\bu_n) \rightharpoonup \mathcal{C}(\bu) \ \text{ in } \ \L^{\frac{r+1}{r}}.
$$
A similar argument shows that 
$$
\mathcal{C}_0(\bu_n) \rightharpoonup \mathcal{C}_0(\bu) \ \text{ in } \ \L^{\frac{r+1}{r}}.
$$
Therefore, the last two terms in \eqref{eqn-conv-est-1} also vanishes as $n \to \infty$, giving
$$
\langle \widetilde{\mathcal{F}}(\bu_n) - \widetilde{\mathcal{F}}(\bu), \bv \rangle \to 0.
$$
Since the above convergence holds for all $\bv \in \V_\sigma \cap \L^{r+1}$, we conclude that $\widetilde{\mathcal{F}}(\bu_n) \rightharpoonup \widetilde{\mathcal{F}}(\bu)$. Thus, both conditions for pseudomonotonicity are satisfied, and we conclude that the operator $\widetilde{\mathcal{F}}: \V_\sigma \cap \L^{r+1} \to \V_\sigma^* + \L^{\frac{r+1}{r}}$ is pseudomonotone.

	\vskip 0.1cm
	\noindent 
	\underline{\emph{Part 5.} \emph{Pseudomonotonicity of the operator $\mathcal{F}(\bu):=\wi{\mathcal{F}}(\bu)+\B(\bu).$}}
	To complete the proof of pseudomonotone property of $\mathcal{F}$, we need to verify Definition \ref{def-pseudo}-(2).  Let us consider a sequence $\{\bu_n\}_{n\in\N}\in\V_{\sigma}\cap\L^{r+1}$ such that 
	\begin{align}\label{eqn-pseudo-1}
		\bu_n\rightharpoonup \bu\ \text{ in }\ \V_{\sigma}\cap\L^{r+1}\ \text{ and }\ \limsup\limits_{n\to\infty} \langle{\mathcal{F}}(\bu_n),\bu_n-\bu\rangle\leq 0, 
	\end{align}
	and let $\bv\in \V_{\sigma}\cap\L^{r+1}$. Note that we have in hand the convergences stated in \eqref{eqn-strong-1} and \eqref{eqn-strong-2}.	Using \eqref{eqn-b-est-4}, we immediately get 
	\begin{align}\label{eqn-bcon}
		\langle\B(\bu_n),\bu_n-\bv\rangle-	\langle\B(\bu),\bu-\bv\rangle&=\langle\B(\bu_n),\bu_n\rangle-\langle\B(\bu_n),\bv\rangle-\langle\B(\bu),\bu\rangle+\langle\B(\bu),\bv\rangle\nonumber\\&=\langle\B(\bu),\bv\rangle-\langle\B(\bu_n),\bv\rangle\nonumber\\&=\langle\B(\bu,\bu-\bu_n),\bv\rangle+\langle\B(\bu-\bu_n,\bu_n),\bv\rangle. 
	\end{align}
	Since $\B:\V_{\sigma}\to\V_{\sigma}^{*}$ is bounded (see \eqref{eqn-b-est-1}), the first term $\langle\B(\bu,\bu-\bu_n),\bv\rangle\to 0 $ as $n\to\infty$ by using the weak convergence $	\bu_n\rightharpoonup \bu\ \text{ in }\ \V_{\sigma}$. We use H\"older's and Gagliardo-Nirenberg's inequalities to estimate the second term in the right hand side of the equality \eqref{eqn-bcon} as 
	\begin{align*}
		|\langle\B(\bu-\bu_n,\bu_n),\bv\rangle|&\leq\|\bu-\bu_n\|_{\L^4}\|\nabla\bu_n\|_{\H}\|\bv\|_{\L^4}\nonumber\\&\leq C_k^2C_g \|\bu_n\|_{\V}\|\bu-\bu_n\|_{\H^1}^{\frac{d}{4}}\|\bu-\bu_n\|_{\H}^{1-\frac{d}{4}}\|\bv\|_{\V}\nonumber\\&\leq C_k^{2+\frac{d}{4}}C_g\|\bu_n\|_{\V}\left(\|\bu\|_{\V}^{\frac{d}{4}}+\|\bu_n\|_{\V}^{\frac{d}{4}}\right)\|\bu-\bu_n\|_{\H}^{1-\frac{d}{4}}\|\bv\|_{\V}\nonumber\\&\to 0\ \text{ as }\ n\to\infty, 
	\end{align*}
	where we have used the strong convergence \eqref{eqn-strong-1}. Therefore, from \eqref{eqn-bcon}, we  infer 
	\begin{align}\label{eqn-bcon-1}
		\lim_{n\to\infty}	\langle\B(\bu_n),\bu_n-\bv\rangle=	\langle\B(\bu),\bu-\bv\rangle\ \text{ for all }\ \bv\in\V_{\sigma}. 
	\end{align}
	By taking $\bu=\bv$ in \eqref{eqn-bcon-1}, we have 
	\begin{align}\label{eqn-bcon-2}
		\lim_{n\to\infty}	\langle\B(\bu_n),\bu_n-\bu\rangle=0. 
	\end{align}
	Let us now consider 
	\begin{align}
		\limsup_{n\to\infty}\langle\wi{\mathcal{F}}(\bu_n),\bu_n-\bu\rangle&= \limsup_{n\to\infty}\langle\wi{\mathcal{F}}(\bu_n),\bu_n-\bu\rangle+\lim_{n\to\infty}	\langle\B(\bu_n),\bu_n-\bu\rangle\nonumber\\&=\limsup_{n\to\infty}\langle\wi{\mathcal{F}}(\bu_n)+\B(\bu_n),\bu_n-\bu\rangle\nonumber\\&=\limsup_{n\to\infty}\langle{\mathcal{F}}(\bu_n),\bu_n-\bu\rangle\leq 0, 
	\end{align}
	by using \eqref{eqn-pseudo-1}. Moreover, the pseudomonotonicity of the operator $\wi{\mathcal{F}}$ implies 
	\begin{align}\label{eqn-fcon-1}
		\langle\wi{\mathcal{F}}(\bu),\bu-\bv\rangle\leq \liminf_{n\to\infty}\langle\wi{\mathcal{F}}(\bu_n),\bu_n-\bv\rangle\ \text{ for all }\ \bv\in\V_{\sigma}\cap\L^{r+1}. 
	\end{align}
	Therefore, using \eqref{eqn-bcon-1} and \eqref{eqn-fcon-1}, we finally have 
	\begin{align}
		\langle{\mathcal{F}}(\bu),\bu-\bv\rangle\leq \liminf_{n\to\infty}\langle{\mathcal{F}}(\bu_n),\bu_n-\bv\rangle\ \text{ for all }\ \bv\in\V_{\sigma}\cap\L^{r+1}, 
	\end{align}
	which completes the proof.

	\vskip 0.1cm
\noindent 
\textbf{Step 2:} \emph{Coercivity and pseudomonotonicity of the operator $\mathscr{F}=\mathcal{F}(\bu)+\gamma^*(\partial J(\gamma\bu))$.} 
	Let $\gamma : \mathbb{H}^{1/2}(\Gamma) \to \mathbb{L}^2(\Gamma)$ denote the natural  embedding operator, and let $\gamma^*$ be its adjoint. We consider the operator $\mathscr{F} : \V_{\sigma} \cap\L^{r+1}\to 2^{\V_{\sigma}^*+\L^{\frac{r+1}{r}}}$ defined by
		\begin{align}
		\mathscr{F}(\bu):&=\mu\A\bu+\B(\bu)+\alpha\bu+\beta\C(\bu)+\kappa\mathcal{C}_0(\bu)+\gamma^*(\partial J(\gamma\bu))\nonumber\\&=\mathcal{F}(\bu)+\gamma^*(\partial J(\gamma\bu)),
	\end{align}
	for all $\bu\in \V_{\sigma}\cap\L^{r+1}$. 
		Let us now show that the multi-valued operator $\mathscr{F}(\cdot)$  is pseudomonotone and it is coercive under the condition \eqref{eqn-cond}. 
		
		Let us first prove the coercivity of the map $\mathscr{F}$  for $\mu> \frac{k_{1}}{2\lambda_0}$ (see \eqref{eqn-cond}). Let $\bu\in\V_{\sigma}$ and $\bu^*\in\mathscr{F}(\bu)$. Then for some $\boldsymbol{\eta}\in \partial J(\gamma\bu)$, we have 
		\begin{align}
			\bu^*=\mathcal{F}(\bu)+\gamma^*\boldsymbol{\eta}. 
		\end{align}
		Therefore,  it is immediately that 
		\begin{align}\label{eqn-coer-1}
			\langle\bu^*,\bu\rangle=2\mu\|\bu\|_{\V}^2+\alpha\|\bu\|_{\H}^2+\beta\|\bu\|_{\L^{r+1}}^{r+1}+\kappa\|\bu\|_{\L^{q+1}}^{q+1}+(\boldsymbol{\eta},\gamma\bu)_{\L^2(\Gamma_1)}.
		\end{align}
		We estimate the final term in the right hand side of \eqref{eqn-coer-1} by using Hypothesis \ref{hyp-sup-j} (H3) (see Lemma \ref{lem-hemi-var} (2) also) and \eqref{eqn-trace} as 
		\begin{align}\label{eqn-coer-2}
			|(\boldsymbol{\eta},\gamma\bu)_{\L^2(\Gamma_1)}|&\leq \|\boldsymbol{\eta}\|_{\L^2(\Gamma_1)}\|\gamma\bu\|_{\L^2(\Gamma_1)}=\|\boldsymbol{\eta}\|_{\L^2(\Gamma_1)}\|\bu_{\tau}\|_{\L^2(\Gamma_1)}\nonumber\\&\leq\left(k_0|\Gamma_1|^{1/2}+k_1\|\gamma\bu\|_{\L^2(\Gamma_1)}\right)\|\bu_\tau\|_{\L^2(\Gamma_1)}\nonumber\\&\leq k_0|\Gamma_1|^{-1/2}\lambda_0^{1/2}\|\bu\|_{\V}+k_1\lambda_0\|\bu\|_{\V}^2.
		\end{align}
	One can derive  from \eqref{eqn-pump-est}, \eqref{eqn-coer-1}, and \eqref{eqn-coer-2} that 
		\begin{align}\label{eqn-coer-3}
			\langle\bu^*,\bu\rangle&\geq\left(2\mu-k_1\lambda_0^{-1}\right)\|\bu\|_{\V}^2+\alpha\|\bu\|_{\H}^2+\frac{\beta}{2}\|\bu\|_{\L^{r+1}}^{r+1}-|\kappa|^{\frac{r+1}{r-q}}|\mathcal{O}|\nonumber\\&\quad-k_0|\Gamma_1|^{1/2}\lambda_0^{-1/2}\|\bu\|_{\V}. 
		\end{align}
		A calculation similar to \eqref{eqn-coercive-1} yields 
		\begin{align}
			\frac{\langle\bu^*,\bu\rangle}{\|\bu\|_{\V\cap\L^{r+1}}}\geq\frac{\max\left\{\left(2\mu-k_1\lambda_0^{-1}\right),\frac{\beta}{2}\right\}\left(\|\bu\|_{\V}^2+\|\bu\|_{\L^{r+1}}^2-1\right)-|\kappa|^{\frac{r+1}{r-q}}|\mathcal{O}|-k_0|\Gamma_1|^{1/2}\lambda_0^{1/2}\|\bu\|_{\V}}{\sqrt{\|\bu\|_{\V}^2+\|\bu\|_{\L^{r+1}}^{2}}}.
		\end{align} 
		Therefore for $\mu> \frac{k_1\lambda_0^{-1}}{2}$, we immediately have 
		\begin{align*}
			\lim\limits_{\|\bu\|_{\V\cap\L^{r+1}}\to\infty}	\frac{\langle\bu^{*},\bu\rangle}{\|\bu\|_{\V\cap\L^{r+1}}}=\infty,
		\end{align*}
		so that the operator $\mathscr{F}:\V_{\sigma}\cap\L^{r+1}\to 2^{\V_{\sigma}^{*}+\L^{\frac{r+1}{r}}}$ is coercive.

		We now proceed to establish that the operator $\mathscr{F}$ is pseudomonotone. From the earlier discussion, it is already known that the operator $\mathcal{F}(\cdot)$ possesses the pseudomonotonicity property. To complete the argument, it remains to verify that the mapping $\gamma^* (\partial J(\gamma \cdot))$ is also pseudomonotone. This verification is carried out using Proposition \ref{prop-suff-pseudo}, following ideas similar to those found in \cite[Lemma 2]{PKa} and \cite[Proposition 5.6]{WHSM}.
		
		First, we address condition (1) of Proposition \ref{prop-suff-pseudo}. It is a well-known result that the Clarke subdifferential of a locally Lipschitz functional has nonempty, convex, and when the underlying space is reflexive, weakly compact values (see \cite[Proposition 2.1.2]{FHC} and also Proposition \ref{prop-clarke}). Since $\gamma$ is a linear operator, it follows that the set-valued map $\gamma^* (\partial J(\gamma \cdot))$ takes values in nonempty, closed, and convex subsets of $\H^{1/2}(\Gamma_1)^*$.
		
		Next, condition (2) of Proposition \ref{prop-suff-pseudo} is a consequence of the growth assumption imposed on $\partial J$ as stated in Hypothesis \ref{hyp-sup-j} (H3).
		
		To verify condition (3), consider a sequence $\bu_n \rightharpoonup \bu$ weakly in $\H^{1/2}(\Gamma_1)$, and assume $\boldsymbol{\xi}_n \rightharpoonup \boldsymbol{\xi}$ weakly in $(\H^{1/2}(\Gamma_1))^*$, with each $\boldsymbol{\xi}_n \in \gamma^*(\partial J(\gamma \bu_n))$. Since $\gamma: \H^{1/2}(\Gamma_1) \to \L^2(\Gamma_1)$ is a compact operator, we have strong convergence $\gamma \bu_n \to \gamma \bu$ in $\L^2(\Gamma_1)$.
		Now define $\boldsymbol{\eta}_n \in \partial J(\gamma \bu_n)$ such that $\boldsymbol{\xi}_n = \gamma^* \boldsymbol{\eta}_n$. By Hypothesis \ref{hyp-sup-j} (H3), the following uniform estimate holds:
			$$
		\| \boldsymbol{\eta}_n \|_{\L^2(\Gamma_1)} \leq k_0 |\Gamma_1|^{1/2} + k_1 \| \gamma \bu_n \|_{\L^2(\Gamma_1)} \leq C,
		$$
			for some constant $C > 0$, as weak convergence implies boundedness of the sequence $\{\bu_n\}_{n\in\N}$.
			Due to the reflexivity of $\L^2(\Gamma_1)$, and by the Banach-Alaoglu theorem, there exists a subsequence (still denoted $\boldsymbol{\eta}_n$) such that $\boldsymbol{\eta}_n \rightharpoonup \boldsymbol{\eta}$ weakly in $\L^2(\Gamma_1)$. The graph of the Clarke subdifferential $\partial J$ is weakly-strongly closed in $\L^2(\Gamma_1) \times \L^2(\Gamma_1)_w$ (see \cite[Proposition 2.1.5]{FHC}). Therefore, $\boldsymbol{\eta} \in \partial J(\gamma \bu)$, and hence $\boldsymbol{\xi} = \gamma^* \boldsymbol{\eta} \in \gamma^*(\partial J(\gamma \bu))$.
		Moreover, the duality pairing satisfies:
			$$
		\langle \boldsymbol{\xi}_n, \bu_n \rangle = (\boldsymbol{\eta}_n, \gamma \bu_n)_{\L^2(\Gamma_1)} \to (\boldsymbol{\eta}, \gamma \bu)_{\L^2(\Gamma_1)} = \langle \boldsymbol{\xi}, \bu \rangle,
		$$
			where the convergence holds for the entire sequence due to uniqueness of weak limits and strong convergence of $\gamma \bu_n$.
			This verifies condition (3) of Proposition \ref{prop-suff-pseudo}, establishing that $\gamma^*(\partial J(\gamma \cdot))$ is pseudomonotone. 
			
			Finally, since the sum of two pseudomonotone operators is itself pseudomonotone (see \cite[Proposition 1.3.68]{ZdSmP}), we conclude that the operator $\mathscr{F}$ is pseudomonotone.

				\vskip 0.1cm
			\noindent 
			\textbf{Step 3:} \emph{Surjectivity.} 
			By applying Theorem \ref{thm-surjective}, we conclude that the operator $\mathscr{F} : \V_{\sigma}\cap\L^{r+1} \to 2^{\V_{\sigma}^*+\L^{\frac{r+1}{r}}}$ is surjective. Consequently, for any $\f \in \V^*\hookrightarrow \V_{\sigma}^*\hookrightarrow\V_{\sigma}^* + \L^{\frac{r+1}{r}}$, there exists a function $\bu \in \V_{\sigma} \cap \L^{r+1}$ such that
			$$
			\mathscr{F}(\bu) \ni \f.
			$$
				This means that $\bu$ satisfies the following variational inequality:
			\begin{align} \label{eqn-variation-rephrased}
				\bu \in \V_{\sigma} \cap \L^{r+1},\quad &\mu \mathfrak{a}(\bu, \bv) + \mathfrak{b}(\bu, \bu, \bv) + \alpha \mathfrak{a}_0(\bu, \bv) + \beta \mathfrak{c}(\bu, \bv) +\kappa \mathfrak{c}_0(\bu,\bv)+ J^0(\bu_{\tau}; \bv_{\tau}) \nonumber \\
				&\geq {}_{\V^{\prime}}\langle \f, \bv \rangle_{\V} \quad \text{for all } \bv \in \V_{\sigma} \cap \L^{r+1}.
			\end{align}
				Furthermore, since the generalized directional derivative $J^0(\bu_{\tau}; \bv_{\tau})$ is bounded above by
				$$
			J^0(\bu_{\tau}; \bv_{\tau}) \leq \int_{\Gamma_1} j^0(\bu_{\tau}; \bv_{\tau})\d S,
			$$
				inequality \eqref{eqn-variation-rephrased} immediately implies the hemivariational inequality \eqref{eqn-hemi-2}. Thus, we conclude that Problem \ref{prob-hemi-2} admits at least one solution.
\end{proof}

Let us now demonstrate that every solution of Problem \ref{prob-hemi-2}  is bounded.
\begin{proposition}\label{prop-energy-est}
	Under the assumptions of Theorem \ref{thm-main-hemi}, if $\bu\in\V_{\sigma}\cap\L^{r+1}$ is a solution of Problem \ref{prob-hemi-2}, then 
	\begin{align}\label{eqn-bound}
		\|\bu\|_{\V}^2+	\|\bu\|_{\L^{r+1}}^{r+1}\leq 2\max\bigg\{\frac{1}{\left(2\mu-k_1\lambda_0^{-1}\right)},\frac{1}{\beta}\bigg\} K=:\widetilde{K},
	\end{align}
where
\begin{align}\label{eqn-value-k} K=\frac{1}{\left(2\mu-k_1\lambda_0^{-1}\right)}\left(\|\f\|_{\V^*}+k_0|\Gamma_1|^{1/2}\lambda_0^{-1/2}\right)^2+ |\kappa|^{\frac{r+1}{r-q}},
	\end{align}
	and  the constants $k_0$ and $k_1$ are from Hypothesis \ref{hyp-sup-j} (H3). 
\end{proposition}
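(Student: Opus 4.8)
The plan is to read the bound straight out of the hemivariational inequality \eqref{eqn-hemi-2} by exploiting that $\V_{\sigma}\cap\L^{r+1}(\mathcal{O})$ is a linear space, so that $-\bu$ is an admissible test function. First I would substitute $\bv=-\bu$ into \eqref{eqn-hemi-2}. The identity $\mathfrak{a}(\bu,\bu)=2\|\bu\|_{\V}^2$ from \eqref{eqn-a-est-2}, the antisymmetry $\mathfrak{b}(\bu,\bu,\bu)=0$ from \eqref{eqn-b-est-4}, and $\mathfrak{c}(\bu,\bu)=\|\bu\|_{\L^{r+1}}^{r+1}$ from \eqref{eqn-c-est-1} (with the analogous $\mathfrak{c}_0(\bu,\bu)=\|\bu\|_{\L^{q+1}}^{q+1}$), each term being linear in its second slot, reduce the inequality to
\begin{align*}
2\mu\|\bu\|_{\V}^2+\alpha\|\bu\|_{\H}^2+\beta\|\bu\|_{\L^{r+1}}^{r+1}+\kappa\|\bu\|_{\L^{q+1}}^{q+1}\leq\langle\f,\bu\rangle+\int_{\Gamma_1}j^0(\bu_{\tau};-\bu_{\tau})\d S.
\end{align*}

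Next I would control the boundary term. By the characterization in Proposition \ref{prop-clarke}(2) there is $\boldsymbol{\eta}\in\partial j(\bu_{\tau})$ with $j^0(\bu_{\tau};-\bu_{\tau})=-\boldsymbol{\eta}\cdot\bu_{\tau}\leq|\boldsymbol{\eta}|\,|\bu_{\tau}|$, so Hypothesis \ref{hyp-sup-j}(H3) gives $j^0(\bu_{\tau};-\bu_{\tau})\leq k_0|\bu_{\tau}|+k_1|\bu_{\tau}|^2$. Integrating over $\Gamma_1$, applying Cauchy--Schwarz to the linear piece, and using the trace inequality \eqref{eqn-trace} yields
\begin{align*}
\int_{\Gamma_1}j^0(\bu_{\tau};-\bu_{\tau})\d S\leq k_0|\Gamma_1|^{1/2}\lambda_0^{-1/2}\|\bu\|_{\V}+k_1\lambda_0^{-1}\|\bu\|_{\V}^2,
\end{align*}
while $\langle\f,\bu\rangle\leq\|\f\|_{\V^*}\|\bu\|_{\V}$. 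Since $\kappa<0$, the pumping term is nonpositive and is moved to the right-hand side, where $|\kappa|\|\bu\|_{\L^{q+1}}^{q+1}$ is absorbed into the Forchheimer dissipation by the interpolation/Young estimate already recorded in \eqref{eqn-pump-est}, at the cost of $\tfrac{\beta}{2}\|\bu\|_{\L^{r+1}}^{r+1}$ and the constant $|\kappa|^{\frac{r+1}{r-q}}|\mathcal{O}|$. Collecting everything leaves
\begin{align*}
\left(2\mu-k_1\lambda_0^{-1}\right)\|\bu\|_{\V}^2+\alpha\|\bu\|_{\H}^2+\tfrac{\beta}{2}\|\bu\|_{\L^{r+1}}^{r+1}\leq\left(\|\f\|_{\V^*}+k_0|\Gamma_1|^{1/2}\lambda_0^{-1/2}\right)\|\bu\|_{\V}+|\kappa|^{\frac{r+1}{r-q}}|\mathcal{O}|.
\end{align*}

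To finish, the smallness condition \eqref{eqn-cond}, equivalently $2\mu-k_1\lambda_0^{-1}>0$, keeps the leading coefficient positive; a single application of Young's inequality to the linear right-hand term then absorbs a multiple of $\|\bu\|_{\V}^2$ into the left-hand side, the nonnegative $\alpha\|\bu\|_{\H}^2$ is discarded, and dividing by $\min\{\tfrac12(2\mu-k_1\lambda_0^{-1}),\tfrac{\beta}{2}\}$ delivers \eqref{eqn-bound} with $K$, $\widetilde K$ as in \eqref{eqn-value-k}. The main obstacle is not any individual estimate but arranging the signs so that everything points the same way at once: the quadratic contribution $k_1\lambda_0^{-1}\|\bu\|_{\V}^2$ from the friction boundary term must be \emph{strictly} dominated by $2\mu\|\bu\|_{\V}^2$, which is precisely what \eqref{eqn-cond} guarantees, and the negative pumping term, far from aiding coercivity, must itself be tamed by the Forchheimer term via \eqref{eqn-pump-est}. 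Once these two sign-sensitive steps are secured, the remainder is a routine absorption argument.
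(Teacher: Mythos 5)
Your proposal is correct and follows essentially the same route as the paper: test with $\bu$ (the paper via the inclusion $\f\in\mathcal{F}(\bu)+\gamma^*(\partial J(\gamma\bu))$, you by inserting $\bv=-\bu$ into \eqref{eqn-hemi-2} and bounding $j^0(\bu_\tau;-\bu_\tau)$ through Proposition \ref{prop-clarke}(2) and (H3)), then absorb the pumping term via \eqref{eqn-pump-est}, the friction term via the trace inequality \eqref{eqn-trace}, and the forcing term via Young's inequality under the smallness condition \eqref{eqn-cond}. The only (harmless, arguably preferable) deviation is that you work directly from the hemivariational inequality rather than from the operator inclusion used in the existence proof, so your argument applies verbatim to any solution of Problem \ref{prob-hemi-2}.
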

\begin{proof}
Since $\bu\in\V_{\sigma}$ is a solution of Problem \ref{prob-hemi-2}, we find 
\begin{align}
	\langle\mathscr{F}(\bu),\bu\rangle+(\boldsymbol{\eta},\gamma\bu)_{\L^2(\Gamma_1)}=\langle\f,\bu\rangle, 
\end{align}
where $\boldsymbol{\eta}\in\partial J(\gamma\bu)$. Then by using the fact that $\mathfrak{b}(\bu,\bu,\bu)=0$, H\"older's inequality, \eqref{eqn-pump-est}, and  Hypothesis \ref{hyp-sup-j} (H3), we deduce 
\begin{align}\label{eqn-bound-2}
	&2\mu\|\bu\|_{\V}^2+\alpha\|\bu\|_{\H}^2+\beta\|\bu\|_{\L^{r+1}}^{r+1}\nonumber\\&= \langle\f,\bu\rangle-\kappa\|\bu\|_{\L^{q+1}}^{q+1}- (\boldsymbol{\eta},\gamma\bu)_{\L^2(\Gamma_1)}\nonumber\\&\leq\|\f\|_{\V^*}\|\bu\|_{\V}+\frac{\beta}{2}\|\bu\|_{\L^{r+1}}^{r+1}+|\kappa|^{\frac{r+1}{r-q}}|\mathcal{O}|+\|
\boldsymbol{\eta}\|_{\L^2(\Gamma_1)}\|\gamma\bu\|_{\L^2(\Gamma_1)}\nonumber\\&\leq  \|\f\|_{\V^*}\|\bu\|_{\V}+ \left(k_0|\Gamma_1|^{1/2}+k_1\|\bu_{\tau}\|_{\L^2(\Gamma_1)}\right)\|\bu_{\tau}\|_{\L^2(\Gamma_1)}+\frac{\beta}{2}\|\bu\|_{\L^{r+1}}^{r+1}+|\kappa|^{\frac{r+1}{r-q}}|\mathcal{O}|\nonumber\\&\leq\left(\|\f\|_{\V^*}+k_0|\Gamma_1|^{1/2}\lambda_0^{-1/2}\right)\|\bu\|_{\V}+k_1\lambda_0^{-1}\|\bu\|_{\V}^2+\frac{\beta}{2}\|\bu\|_{\L^{r+1}}^{r+1}+|\kappa|^{\frac{r+1}{r-q}}|\mathcal{O}|\nonumber\\&\leq\frac{\left(2\mu-k_1\lambda_0^{-1}\right)}{2}\|\bu\|_{\V}^2+\frac{1}{\left(2\mu-k_1\lambda_0^{-1}\right)}\left(\|\f\|_{\V^*}+k_0|\Gamma_1|^{1/2}\lambda_0^{-1/2}\right)^2 \nonumber\\&\quad+k_1\lambda_0^{-1}\|\bu\|_{\V}^2+\frac{\beta}{2}\|\bu\|_{\L^{r+1}}^{r+1}+|\kappa|^{\frac{r+1}{r-q}}|\mathcal{O}|,
\end{align}
so that the estimate \eqref{eqn-bound-2} follows provided $2\mu>k_1\lambda_0^{-1}$. 
\end{proof}

We now proceed to examine the uniqueness of the solution.

\begin{theorem}\label{thm-unique}
Under Hypothesis \ref{hyp-sup-j} (H1)-(H4), for $r\in(3,\infty)$, assume that either 
\begin{align}\label{eqn-unique-con-1}
	\mbox{$\mu>\frac{\delta_1}{2\lambda_0}$\  and \ $\alpha\geq (\widetilde{\varrho}_{1,r}+\varrho_{2,r}+\varrho_{3,r})$,}
\end{align}
where 
	\begin{align}\label{eqn-varrhotilde}
	\widetilde{\varrho}_{1,r}=\left(\frac{C_k^2}{2(2\mu-\delta_1\lambda_0^{-1})}\right)^{\frac{r-1}{r-3}}\left(\frac{r-3}{r-1}\right)\left(\frac{8}{\beta (r-1)}\right)^{\frac{2}{r-3}}
\end{align}
or 
	\begin{align}
	\mbox{ $\mu>\frac{\delta_1}{2\lambda_0}$, $\alpha\geq (\widehat{\varrho}_{1}+\varrho_{2,r}+\varrho_{3,r})$\  and \ $\beta\geq 4\widehat{\varrho}_{1}$,}
	\end{align}
	where 
		\begin{align}\label{eqn-varrhohat}
		\widehat{\varrho}_{1}=\frac{C_k^2}{2(2\mu-\delta_1\lambda_0^{-1})}
	\end{align}
holds.	Then Problem  \ref{prob-hemi-2} admits a unique solution. 

For $r\in[1,3]$, assume that 
\begin{align}\label{eqn-unique-con-2}
\mbox{$\mu>\frac{\delta_1}{2\lambda_0}$\  and \  $\alpha>\widehat{\varrho}_4(C_gC_k)^{\frac{8}{4-d}}\widetilde{K}^{\frac{4}{4-d}}+\varrho_{2,r}+\varrho_{3,r}$,}
\end{align}
where 
\begin{align}\label{eqn-varrhohat-1}
	\widehat{\varrho}_4=C_g^{\frac{8}{4-d}}C_k^{\frac{8}{4-d}}\left(\frac{8}{4-d}\right)\left(\frac{4+d}{2(2\mu-\delta_1\lambda_0^{-1})}\right)^{\frac{4+d}{4-d}},
\end{align}
and $\widetilde{K}$ is defined in \eqref{eqn-bound}, then Problem  \ref{prob-hemi-2} admits a unique solution. 
	
	Furthermore, the mapping $ \V^*\ni \boldsymbol{f} \mapsto \boldsymbol{u} \in \V_{\sigma}\cap\L^{r+1}$ is Lipschitz continuous.
\end{theorem}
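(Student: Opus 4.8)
The plan is to prove uniqueness by a symmetric cross-testing argument and then extract the Lipschitz estimate from the same computation. Suppose $\bu_1,\bu_2\in\V_{\sigma}\cap\L^{r+1}$ are two solutions of Problem \ref{prob-hemi-2} for the same datum $\f$, and set $\bw=\bu_1-\bu_2$. Since the inequality \eqref{eqn-hemi-2} holds for every test function, I would test the relation for $\bu_1$ with $\bv=\bu_2-\bu_1$ and that for $\bu_2$ with $\bv=\bu_1-\bu_2$ and add the two. The forcing terms cancel, and using \eqref{eqn-a-est-2}, \eqref{eqn-b-est-4} together with the decomposition $\mathfrak{b}(\bu_1,\bu_1,\bw)-\mathfrak{b}(\bu_2,\bu_2,\bw)=\mathfrak{b}(\bu_1,\bw,\bw)+\mathfrak{b}(\bw,\bu_2,\bw)=\mathfrak{b}(\bw,\bu_2,\bw)$, the sum collapses to
\[
2\mu\|\bw\|_{\V}^2+\alpha\|\bw\|_{\H}^2+\beta\langle\C(\bu_1)-\C(\bu_2),\bw\rangle+\kappa\langle\mathcal{C}_0(\bu_1)-\mathcal{C}_0(\bu_2),\bw\rangle+\mathfrak{b}(\bw,\bu_2,\bw)\leq\mathcal{I},
\]
where $\mathcal{I}=\int_{\Gamma_1}\big[j^0(\bu_{1,\tau};\bu_{2,\tau}-\bu_{1,\tau})+j^0(\bu_{2,\tau};\bu_{1,\tau}-\bu_{2,\tau})\big]\d S$. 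The first step is to absorb $\mathcal{I}$: by the relaxed-monotonicity form \eqref{eqn-alternative} of (H4) and the trace inequality \eqref{eqn-trace}, one gets $\mathcal{I}\leq\delta_1\|\bw_{\tau}\|_{\L^2(\Gamma_1)}^2\leq\delta_1\lambda_0^{-1}\|\bw\|_{\V}^2$, so the viscous coefficient degrades to $(2\mu-\delta_1\lambda_0^{-1})$, which is positive exactly under the standing hypothesis $\mu>\frac{\delta_1}{2\lambda_0}$.

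The decisive step is the control of $\mathfrak{b}(\bw,\bu_2,\bw)$, and this is where the case split (and the main difficulty) lies. For $r>3$ I would write $|\mathfrak{b}(\bw,\bu_2,\bw)|\leq C_k\|\bw\|_{\V}\|\bu_2\bw\|_{\H}$ and apply Young's inequality, reserving $\tfrac{2\mu-\delta_1\lambda_0^{-1}}{2}\|\bw\|_{\V}^2$ and leaving $\widehat{\varrho}_1\|\bu_2\bw\|_{\H}^2$ with $\widehat{\varrho}_1=\frac{C_k^2}{2(2\mu-\delta_1\lambda_0^{-1})}$ as in \eqref{eqn-varrhohat}. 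Under the first set of conditions \eqref{eqn-unique-con-1}, the interpolation of \eqref{eqn-des} (with $\widehat{\varrho}_1$ in place of $\frac{C_k^2}{4\mu}$) turns this into $\frac{\beta}{4}\||\bu_2|^{\frac{r-1}{2}}\bw\|_{\H}^2+\widetilde{\varrho}_{1,r}\|\bw\|_{\H}^2$, with $\widetilde{\varrho}_{1,r}$ precisely \eqref{eqn-varrhotilde}; under the alternative conditions I would instead use the pointwise bound $|\bu_2|^2\leq 1+|\bu_2|^{r-1}$ (valid since $r-1\geq2$) to split $\widehat{\varrho}_1\|\bu_2\bw\|_{\H}^2\leq\widehat{\varrho}_1\||\bu_2|^{\frac{r-1}{2}}\bw\|_{\H}^2+\widehat{\varrho}_1\|\bw\|_{\H}^2$, the weighted part being absorbable once $\beta\geq4\widehat{\varrho}_1$. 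For $r\in[1,3]$ I would estimate $\mathfrak{b}$ through the Gagliardo--Nirenberg chain \eqref{2.21}, producing $\tfrac{2\mu-\delta_1\lambda_0^{-1}}{2}\|\bw\|_{\V}^2+\widehat{\varrho}_4\|\bu_2\|_{\L^4}^{\frac{8}{4-d}}\|\bw\|_{\H}^2$ with $\widehat{\varrho}_4$ as in \eqref{eqn-varrhohat-1}, and then invoke the a priori bound of Proposition \ref{prop-energy-est} to replace $\|\bu_2\|_{\L^4}^{\frac{8}{4-d}}\leq(C_gC_k)^{\frac{8}{4-d}}\|\bu_2\|_{\V}^{\frac{8}{4-d}}\leq(C_gC_k)^{\frac{8}{4-d}}\widetilde{K}^{\frac{4}{4-d}}$.

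In every case the monotonicity of $\C$ from \eqref{eqn-ces} supplies the nonnegative terms $\frac{\beta}{2}\||\bu_1|^{\frac{r-1}{2}}\bw\|_{\H}^2+\frac{\beta}{2}\||\bu_2|^{\frac{r-1}{2}}\bw\|_{\H}^2$, while the pumping contribution is dominated via \eqref{eqn-est-c0-4} by $\frac{\beta}{2}\||\bu_1|^{\frac{r-1}{2}}\bw\|_{\H}^2+\frac{\beta}{4}\||\bu_2|^{\frac{r-1}{2}}\bw\|_{\H}^2+(\varrho_{2,r}+\varrho_{3,r})\|\bw\|_{\H}^2$. Careful bookkeeping then cancels all the weighted quantities $\||\bu_i|^{\frac{r-1}{2}}\bw\|_{\H}^2$ (this is exactly where the thresholds on $\beta$ and the precise allocation of the $\frac{\beta}{2}$ weights enter), leaving
\[
\frac{2\mu-\delta_1\lambda_0^{-1}}{2}\|\bw\|_{\V}^2+\big(\alpha-\Theta\big)\|\bw\|_{\H}^2\leq0,
\]
where $\Theta$ equals $\widetilde{\varrho}_{1,r}+\varrho_{2,r}+\varrho_{3,r}$, or $\widehat{\varrho}_1+\varrho_{2,r}+\varrho_{3,r}$, or $\widehat{\varrho}_4(C_gC_k)^{\frac{8}{4-d}}\widetilde{K}^{\frac{4}{4-d}}+\varrho_{2,r}+\varrho_{3,r}$ according to the case. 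The stated hypotheses on $\alpha$ guarantee $\alpha-\Theta\geq0$, and since $2\mu-\delta_1\lambda_0^{-1}>0$, this forces $\|\bw\|_{\V}=0$, i.e. $\bu_1=\bu_2$.

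For the Lipschitz dependence I would rerun the identical argument with two data $\f_1,\f_2$ and their respective solutions $\bu_1,\bu_2$; the only modification is that the forcing terms no longer cancel but leave $\langle\f_1-\f_2,\bw\rangle\leq\|\f_1-\f_2\|_{\V^*}\|\bw\|_{\V}$ on the right-hand side. The same cancellations then yield $\frac{2\mu-\delta_1\lambda_0^{-1}}{2}\|\bw\|_{\V}^2\leq\|\f_1-\f_2\|_{\V^*}\|\bw\|_{\V}$, whence $\|\bu_1-\bu_2\|_{\V}\leq\frac{2}{2\mu-\delta_1\lambda_0^{-1}}\|\f_1-\f_2\|_{\V^*}$, the claimed Lipschitz estimate. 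The main obstacle throughout is the nonmonotone convective term $\mathfrak{b}(\bw,\bu_2,\bw)$: keeping its estimate compatible with the $\C$-monotonicity so that the weighted norms cancel, while matching the precise constants $\widetilde{\varrho}_{1,r},\widehat{\varrho}_1,\widehat{\varrho}_4$ declared in the statement, is the delicate part of the computation.
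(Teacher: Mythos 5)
Your proposal is correct: all of the substantive estimates (the Young/interpolation treatment of $\mathfrak{b}(\bw,\bu_2,\bw)$ in \eqref{eqn-des} with the constant recalibrated to $\widehat{\varrho}_1$, the pointwise splitting $|\bu_2|^2\leq 1+|\bu_2|^{r-1}$ for the alternative condition as in \eqref{2.26}, the Gagliardo--Nirenberg route \eqref{2.21} for $r\in[1,3]$, the monotonicity \eqref{eqn-ces} of $\C$ and the bound \eqref{eqn-est-c0-4} for $\C_0$, and the bookkeeping that cancels the weighted norms under $\beta\geq 4\widehat{\varrho}_1$) coincide with the paper's, and your budget of the coefficient $2\mu-\delta_1\lambda_0^{-1}$ closes. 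The one genuine difference is the first step. The paper starts from the identity \eqref{eqn-differece}, i.e.\ it subtracts the two operator equations $\f_i=\mathcal{F}(\bu_i)+\gamma^*\boldsymbol{\eta}_i$ with selections $\boldsymbol{\eta}_i\in\partial J(\gamma\bu_i)$, and controls the boundary term via the relaxed monotonicity of $\partial J$ (Lemma \ref{lem-hemi-var}(4)); this presumes that each solution satisfies the multivalued inclusion, which is automatic for the solutions produced by the surjectivity argument but is an extra (unstated) identification for an arbitrary solution of \eqref{eqn-hemi-2}. You instead cross-test the hemivariational inequality \eqref{eqn-hemi-2} itself with $\bv=\bu_2-\bu_1$ and $\bv=\bu_1-\bu_2$, and bound the resulting sum $\int_{\Gamma_1}[j^0(\bu_{1,\tau};\bu_{2,\tau}-\bu_{1,\tau})+j^0(\bu_{2,\tau};\bu_{1,\tau}-\bu_{2,\tau})]\d S$ by $\delta_1\lambda_0^{-1}\|\bw\|_{\V}^2$ using the equivalent form \eqref{eqn-alternative} of (H4) together with \eqref{eqn-trace}. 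The two routes produce the same master inequality, but yours has the modest advantage of applying directly to every solution of \eqref{eqn-hemi-2} without passing back through the inclusion; the paper's formulation, on the other hand, is the one that generalizes immediately to the continuous-dependence estimate with two different data, which you also recover by keeping the uncancelled term $\langle\f_1-\f_2,\bw\rangle$.
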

\begin{proof}
	According to Theorem \ref{thm-main-hemi}, Problem  \ref{prob-hemi-2} admits a solution under the given assumptions. Therefore, it remains to establish the uniqueness of this solution. We only show the continuous dependence of the solution on the data, from which the uniqueness immediately follows. 
	
	Let us assume that Problem  \ref{prob-hemi-2}  has two solutions $\bu_1,\bu_2\in\V_{\sigma}\cap\L^{r+1}$ with the corresponding data $\f_1$ and $\f_2$, respectively.  Then $\bu_1-\bu_2$ satisfies for all $\bv\in\V\cap\L^{\frac{r+1}{r}}$
	\begin{align}\label{eqn-differece}
		\langle\mathcal{F}(\bu_1)-\mathcal{F}(\bu_2),\bv\rangle+(\boldsymbol{\eta}_1-\boldsymbol{\eta}_2,\gamma\bv)_{\L^2(\Gamma)}=\langle\f_1-\f_2,\bv\rangle, 
	\end{align}
	where $\boldsymbol{\eta}_1\in\partial J(\gamma\bu_1)$ and $\boldsymbol{\eta}_2\in\partial J(\gamma\bu_2)$. Taking $\bv=\bu_1-\bu_2$ in \eqref{eqn-differece}, we obtain 
	\begin{align}\label{eqn-differece-1}
		\langle\mathcal{F}(\bu_1)-\mathcal{F}(\bu_2),\bu_1-\bu_2\rangle&=\langle\f_1-\f_2,\bu_1-\bu_2\rangle-(\boldsymbol{\eta}_1-\boldsymbol{\eta}_2,\gamma(\bu_1-\bu_2))_{\L^2(\Gamma)}.
	\end{align}
	Using the Cauchy-Schwarz and Young's inequalities, we estimate $|\langle\f_1-\f_2,\bu_1-\bu_2\rangle|$ as 
	\begin{align}\label{eqn-differece-0}
	|\langle\f_1-\f_2,\bu_1-\bu_2\rangle|&\leq\|\f_1-\f_2\|_{\V^*}\|\bu_1-\bu_2\|_{\V}
		\nonumber\\&\leq\frac{(2\mu-\delta_1\lambda_0^{-1})}{4}\|\bu_1-\bu_2\|_{\V}^2+\frac{1}{(2\mu-\delta_1\lambda_0^{-1})}\|\f_1-\f_2\|_{\V^*}^2.
	\end{align}
	Using Hypothesis \ref{hyp-sup-j} (H4) and \eqref{eqn-trace}, we estimate the second term in the right hand side of \eqref{eqn-differece-1} as 
	\begin{align}\label{eqn-differ}
		-(\boldsymbol{\eta}_1-\boldsymbol{\eta}_2,\gamma(\bu_1-\bu_2))_{\L^2(\Gamma)}&\leq \delta_1\|\gamma(\bu_1-\bu_2)\|_{\L^2(\Gamma)}^2=\delta_1\|\bu_{1,\tau}-\bu_{2,\tau}\|_{\L^2(\Gamma_1)}^2\nonumber\\&\leq \delta_1\lambda_0^{-1}\|\bu_1-\bu_2\|_{\V}^2. 
	\end{align}
	Let us first consider the case $r>3$. Calculations similar to \eqref{eqn-bdes} and \eqref{eqn-des} yield 
	\begin{align}\label{eqn-differece-2}
		&	|	\langle\B(\bu_1)-\B(\bu_2),\bu_1-\bu_2\rangle|\nonumber\\&\leq\frac{(2\mu-\delta_1\lambda_0^{-1})}{2}\|\bu_1-\bu_2\|_{\V}^2+\frac{\beta}{4}\||\bu_2|^{\frac{r-1}{2}}(\bu_1-\bu_2)\|_{\H}^2+\widetilde{\varrho}_{1,r}\|\bu_1-\bu_2\|_{\H}^2,
	\end{align}
	where $\widetilde{\varrho}_{1,r}$ is defined in \eqref{eqn-varrhotilde}.	 Following a similar approach to the estimate in \eqref{eqn-est-c0-4}, we bound
	$|\kappa||\langle\mathcal{C}_0(\bu)-\mathcal{C}_0(\bv),\bu-\bv\rangle|$
	as follows:
	\begin{align}\label{eqn-difference-4}
		|\kappa||\langle\mathcal{C}_0(\bu_1)-\mathcal{C}_0(\bu_2),\bu_1-\bu_2\rangle|&\leq\frac{\beta}{2}\||\bu_1|^{\frac{r-1}{2}}(\bu_1-\bu_2)\|_{\L^2}^2+\frac{\beta}{4}\||\bu_2|^{\frac{r-1}{2}}(\bu_1-\bu_2)\|_{\L^2}^2\nonumber\\&\quad+(\varrho_{2,r}+\varrho_{3,r})\|\bu_1-\bu_2\|_{\H}^2,
	\end{align}
where $\varrho_{2,r}$ and $\varrho_{3,r}$  are defined in \eqref{eqn-rho-2} and \eqref{eqn-rho-3}, respectively. 	Therefore, using a calculation similar to \eqref{eqn-final-est}, we further have 
	\begin{align}\label{eqn-differece-3}
		&2 \mu\|\bu_1-\bu_2\|_{\V}^2+\alpha\|\bu_1-\bu_2\|_{\H}^2\nonumber\\&\leq \langle\mathcal{F}(\bu_1)-\mathcal{F}(\bu_2),\bu_1-\bu_2\rangle\nonumber\\&\quad+\frac{(2\mu-\delta_1\lambda_0^{-1})}{2}\|\bu_1-\bu_2\|_{\V}^2+(\widetilde{\varrho}_{1,r}+\varrho_{2,r}+\varrho_{3,r})\|\bu_1-\bu_2\|_{\H}^2. 
	\end{align}
	Combining \eqref{eqn-differece-0}, \eqref{eqn-differ}, \eqref{eqn-difference-4} and  \eqref{eqn-differece-3}, and applying it in \eqref{eqn-differece-1}, we deduce 
	\begin{align}\label{eqn-differece-4}
		&\left(2\mu-\delta_1\lambda_0^{-1}\right)\|\bu_1-\bu_2\|_{\V}^2+4\left(\alpha-(\widetilde{\varrho}_{1,r}+\varrho_{2,r}+\varrho_{3,r})\right)\|\bu_1-\bu_2\|_{\H}^2\nonumber\\& \quad \leq \frac{4}{(2\mu-\delta_1\lambda_0^{-1})}\|\f_1-\f_2\|_{\V^*}^2,
	\end{align}
which is the continuous dependence result. 	The uniqueness follows by taking $\f_1=\f_2\in\V^*$,  $\mu>\frac{\delta_1}{2\lambda_0}$ and $\alpha\geq (\widetilde{\varrho}_{1,r}+\varrho_{2,r}+\varrho_{3,r})$.  
	
	One can estimate $|\langle\B(\bu-\bv,\bu-\bv),\bv\rangle|$ in the following way also: 
	\begin{align}\label{2.26}
		&	|\langle\B(\bu_1-\bu_2,\bu_1-\bu_2),\bu_2\rangle|\nonumber\\&\leq\|\nabla(\bu_1-\bu_2)\|_{\H}\|\bu_2(\bu_1-\bu_2)\|_{\H}
	\leq C_k\|\bu_1-\bu_2\|_{\V}\|\bu_1(\bu_1-\bu_2)\|_{\H}\nonumber\\&\leq \frac{(2\mu-\delta_1\lambda_0^{-1})}{2} \|\bu_1-\bu_2\|_{\V}^2+\frac{C_k^2}{2(2\mu-\delta_1\lambda_0^{-1}) }\int_{\mathcal{O}}|\bu_2(\x)|^2|\bu_1(\x)-\bu_2(\x)|^2\d\x\nonumber\\&= \frac{(2\mu-\delta_1\lambda_0^{-1})}{2} \|\bu_1-\bu_2\|_{\V}^2\nonumber\\&\quad+\frac{C_k^2}{2 (2\mu-\delta_1\lambda_0^{-1}) }\int_{\mathcal{O}}|\bu_1(\x)-\bu_2(\x)|^2\left(|\bu_2(\x)|^{r-1}+1\right)\frac{|\bu_2(\x)|^2}{|\bu_2(\x)|^{r-1}+1}\d\x\nonumber\\&\leq \frac{(2\mu-\delta_1\lambda_0^{-1})}{2} \|\bu_1-\bu_2\|_{\V}^2+\frac{C_k^2}{2(2\mu-\delta_1\lambda_0^{-1})}\int_{\mathcal{O}}|\bu_2(\x)|^{r-1}|\bu_1(\x)-\bu_2(\x)|^2\d\x\nonumber\\&\quad+\frac{C_k^2}{2(2\mu-\delta_1\lambda_0^{-1})}\int_{\mathcal{O}}|\bu_1(\x)-\bu_2(\x)|^2\d\x,
	\end{align}
	where 	we have used the fact that $\left\|\frac{|\bu_2|^2}{|\bu_2|^{r-1}+1}\right\|_{\L^{\infty}}<1$, for $r\geq 3$. Therefore, \eqref{eqn-differece-4} reduces to 
	\begin{align}
	&	\left(2\mu-\delta_1\lambda_0^{-1}\right)\|\bu_1-\bu_2\|_{\V}^2+4\left(\alpha-(\widehat{\varrho}_{1}+\varrho_{2,r}+\varrho_{3,r})\right)\|\bu_1-\bu_2\|_{\H}^2\nonumber\\&\quad+4\left(\frac{\beta}{4}-	\widehat{\varrho}_{1}\right)\||\bu_2|^{\frac{r-1}{2}}(\bu_1-\bu_2)\|_{\L^2}^2\leq \frac{4}{(2\mu-\delta_1\lambda_0^{-1})}\|\f_1-\f_2\|_{\V^*}^2,
	\end{align}
$\widehat{\varrho}_1$ is defined in \eqref{eqn-varrhohat} 	and the uniqueness follows provided $\f_1=\f_2\in\V^*$, $\mu>\frac{\delta_1}{2\lambda_0}$, $\alpha\geq (\widehat{\varrho}_{1}+\varrho_{2,r}+\varrho_{3,r})$ and $\beta\geq 4\widehat{\varrho}_{1}$. 
	
	For $d\in\{2,3\}$ with $r\in[1,3]$, a calculation similar to \eqref{eqn-diff} gives 
	\begin{align}\label{eqn-difference-5}
		&2\mu\|\bu_1-\bu_2\|_{\V}^2+\alpha\|\bu_1-\bu_2\|_{\H}^2\nonumber\\&\quad \leq 	\langle\F(\bu_1)-\F(\bu_2),\bu_1-\bu_2\rangle+\frac{(2\mu-\delta_1\lambda_0^{-1})}{4}\|\bu_1-\bu_2\|_{\V}^2\nonumber\\&\quad \quad+\widehat{\varrho}_4 \|\bv\|_{\L^4}^{\frac{8}{4-d}}\|\bu-\bv\|_{\H}^2 +(\varrho_{2,r}+\varrho_{3,r})\|\bu-\bv\|_{\H}^2,
	\end{align}
	where $	\widehat{\varrho}_4$ is defined in \eqref{eqn-varrhohat-1}.
	Using \eqref{eqn-differece-1}-\eqref{eqn-differ},  \eqref{Eqn-mon-lip}, and \eqref{eqn-bound} in \eqref{eqn-difference-5}, we arrive at 
	\begin{align}
		&	\frac{\left(2\mu-\delta_1\lambda_0^{-1}\right)}{2}\|\bu_1-\bu_2\|_{\V}^2+\left(\alpha-	\widehat{\varrho}_4 (C_gC_k)^{\frac{8}{4-d}}\widetilde{K}^{\frac{4}{4-d}}-\varrho_{2,r}-\varrho_{3,r}\right)\|\bu_1-\bu_2\|_{\H}^2\nonumber\\& \quad \leq \frac{1}{(2\mu-\delta_1\lambda_0^{-1})}\|\f_1-\f_2\|_{\V^*}^2,
	\end{align}
where $\widetilde{K}$ is defined in \eqref{eqn-bound}. 	Therefore, for $\mu>\frac{\delta_1}{2\lambda_0}$ and $\alpha>	\widehat{\varrho}_4(C_gC_k)^{\frac{8}{4-d}}\widetilde{K}^{\frac{4}{4-d}}+\varrho_{2,r}+\varrho_{3,r}$,  the uniqueness follows. 
\end{proof}

\begin{remark}\label{rem-station-unique}
	1. We highlight that the uniqueness condition for $r > 3$ is independent of the external forcing term $\f$, whereas for $1 \leq r \leq 3$, the dependence on $\f$ is evident from condition \eqref{eqn-unique-con-2}. This is a crucial observation, particularly in the context of optimal control problems governed by hemivariational inequalities for the 2D and 3D CBFeD equations, an area we intend to explore in our upcoming work.

2. 	For $d\in\{2,3\}$ and $r=3$, a calculation similar to \eqref{eqn-critical} provides 
	\begin{align*}
			& \left(2\mu-\frac{1}{\beta}\right)\|\bu-\bv\|_{\V}^2+\alpha\|\bu-\bv\|_{\H}^2\nonumber\\& \quad \leq 
				\langle\F(\bu)-\F(\bv),\bu-\bv\rangle+(\varrho_{2,3}+\varrho_{3,3})\|\bu-\bv\|_{\H}^2.
	\end{align*}
	Combining the above inequality with \eqref{eqn-differece-1}-\eqref{eqn-differ}, we find 
	\begin{align*}
		&\frac{1}{2}\left(2\mu-\frac{1}{\beta}-\delta_1\lambda_0^{-1}\right)\|\bu_1-\bu_2\|_{\V}^2+(\alpha-\varrho_{2,3}-\varrho_{3,3})\|\bu_1-\bu_2\|_{\H}^2\nonumber\\& \quad \leq  \frac{1}{2\left(2\mu-\frac{1}{\beta}-\delta_1\lambda_0^{-1}\right)}\|\f_1-\f_2\|_{\V^*}^2,
	\end{align*}
	and the uniqueness follows provided $\mu>\frac{1}{2\beta}+\frac{\delta_1}{\lambda_0}$ and $\alpha>\varrho_{2,3}+\varrho_{3,3}$. 
\end{remark}

We now shift our focus to the analysis of Problem \ref{prob-hemi-1}. Let us prove the inf-sup condition (also known as the Ladyzhenskaya-Babu\u{s}ka-Brezzi (LBB) condition)  (see  \cite[Theorem 2.2]{JSHNJW} also). From now onward, we assume that $1\leq r\leq\frac{2d}{d-2}$ for $d>2$ and $1\leq r<\infty$ for $d=2$. We use the notation $1\leq r\leq\frac{2d}{(d-2)^+}$ for this condition. In this case, we know by Sobolev's embedding that $\H^1(\mathcal{O})\hookrightarrow\L^{r+1}(\mathcal{O})$. Therefore,  Problem \ref{prob-hemi-1}  reduces to find $\bu\in\V$. 

We infer  from \cite[Lemma III.3.1]{GGP1} that   the Bogovskii operator $\mathfrak{B}:\mathrm{L}^{2}(\mathcal{O})\to \mathbb{H}_0^{1}(\mathcal{O})$ provides the right inverse for divergence such that 
\begin{align*}
	\mathrm{div}(\mathfrak{B}q)=q, \ \|\mathfrak{B}q\|_{\V_0}\leq C\|q\|_{\mathrm{L}^{2}},
	\end{align*}
	where  $\V_0=\mathbb{H}^{1}_0(\mathcal{O})$ and  the constant $C$ depends on $d$ and $\mathcal{O}$. Therefore, given any $q\in\mathrm{L}^{2}(\mathcal{O})$, there exists $\bv=\mathfrak{B}(q)$ such that 
	\begin{align*}
		\mathfrak{d}(\bv,q)=-\int_{\mathcal{O}}q\mathrm{div\ }\bv\d\x=-\int_{\mathcal{O}}q^2\d\x. 
	\end{align*}
	But since we need a positive supremizer, we take $\bv\mapsto-\bv$:
		\begin{align*}
		\mathfrak{d}(-\bv,q)=\|q\|_{\mathrm{L}^2}^2.
	\end{align*}
Therefore,  by using the fact that $\|\bv\|_{\V}\leq C\|\bv\|_{\V_0} \leq C\|q\|_{\mathrm{L}^{2}},$ we have 
	\begin{align*}
		\sup_{\bv\in\V_0}\frac{\mathfrak{d}(\bv,q)}{\|\bv\|_{\V}}\geq \frac{\|q\|_{\mathrm{L}^{2}}^2}{C\|q\|_{\mathrm{L}^{2}}}=\frac{1}{C}\|q\|_{\mathrm{L}^{2}}  \ \text{ for all }\ q\in Q, 
	\end{align*}
and the inf-sup condition
	\begin{align}\label{eqn-inf-sup}
\vartheta\|q\|_{\mathrm{L}^2}\leq 	\sup_{\bv\in\V_0}\frac{\mathfrak{d}(\bv,q)}{\|\bv\|_{\V}}  \ \text{ for all }\ q\in Q,
\end{align}
follows. We use the above condition to prove the next result.
\begin{theorem}\label{thm-equivalent}
	For $1\leq r\leq\frac{2d}{(d-2)^+}$, under the assumptions stated in Theorem \ref{thm-unique}, Problem \ref{prob-hemi-1} admits a unique solution.
	
	Moreover, $p\in Q$ depends locally Lipschitz continuously on $\f\in\V^*$. 
\end{theorem}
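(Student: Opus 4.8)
The plan is to recover the pressure from the velocity solution already produced for the reduced problem, using the inf-sup condition \eqref{eqn-inf-sup} in the spirit of de Rham's theorem and the closed-range theorem. Since $1\leq r\leq\frac{2d}{(d-2)^+}$, the Sobolev embedding $\H^1(\mathcal{O})\hookrightarrow\L^{r+1}(\mathcal{O})$ identifies $\V\cap\L^{r+1}=\V$ and $\V_{\sigma}\cap\L^{r+1}=\V_{\sigma}$, so all test functions may be taken in $\V$ (respectively $\V_{\sigma}$). By Theorems \ref{thm-main-hemi} and \ref{thm-unique}, Problem \ref{prob-hemi-2} has a unique solution $\bu\in\V_{\sigma}$, and the surjectivity step in the proof of Theorem \ref{thm-main-hemi} furnishes an element $\boldsymbol{\eta}\in\partial J(\gamma\bu)$ such that $\mathcal{F}(\bu)+\gamma^*\boldsymbol{\eta}=\f$ in $\V_{\sigma}^{*}$; equivalently, $\langle\mathcal{F}(\bu),\bv\rangle+(\boldsymbol{\eta},\gamma\bv)_{\L^2(\Gamma_1)}=\langle\f,\bv\rangle$ for all $\bv\in\V_{\sigma}$.

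Next I would define the residual functional $\mathcal{L}\in\V^{*}$ by $\langle\mathcal{L},\bv\rangle:=\langle\f,\bv\rangle-\langle\mathcal{F}(\bu),\bv\rangle-(\boldsymbol{\eta},\gamma\bv)_{\L^2(\Gamma_1)}$ for $\bv\in\V$. By the displayed equality, $\mathcal{L}$ vanishes on $\V_{\sigma}$, and one checks that $\V_{\sigma}$ is exactly the kernel of the operator $\bv\mapsto\mathfrak{d}(\bv,\cdot)$ on $\V$ (using that $\int_{\Gamma}\bv\cdot\boldsymbol{n}=0$ for $\bv\in\V$ forces a constant divergence to vanish), so $\mathcal{L}$ annihilates this kernel. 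The inf-sup condition \eqref{eqn-inf-sup} makes $q\mapsto\mathfrak{d}(\cdot,q)$ an isomorphism of $Q$ onto the annihilator of $\V_{\sigma}$ in $\V^{*}$; hence there exists a unique $p\in Q$ with $\mathfrak{d}(\bv,p)=\langle\mathcal{L},\bv\rangle$ for all $\bv\in\V$. Substituting back yields $\langle\mathcal{F}(\bu),\bv\rangle+\mathfrak{d}(\bv,p)+(\boldsymbol{\eta},\gamma\bv)_{\L^2(\Gamma_1)}=\langle\f,\bv\rangle$; replacing $(\boldsymbol{\eta},\gamma\bv)_{\L^2(\Gamma_1)}$ by $\int_{\Gamma_1}j^0(\bu_{\tau};\bv_{\tau})\d S$ via $\boldsymbol{\eta}\in\partial J(\gamma\bu)$ and Lemma \ref{lem-hemi-var}(3) produces the first inequality in \eqref{eqn-hemi-1}, while $\bu\in\V_{\sigma}$ gives $\mathfrak{d}(\bu,q)=0$ for all $q\in Q$. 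Uniqueness of $\bu$ is inherited from Theorem \ref{thm-unique} (restricting the test functions in Problem \ref{prob-hemi-1} to $\V_{\sigma}$ recovers Problem \ref{prob-hemi-2} and forces $\bu\in\V_{\sigma}$), and uniqueness of $p$ follows from the injectivity of $q\mapsto\mathfrak{d}(\cdot,q)$ guaranteed by \eqref{eqn-inf-sup}.

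For the local Lipschitz dependence, I would take two data $\f_1,\f_2\in\V^{*}$ with solutions $(\bu_1,p_1),(\bu_2,p_2)$ and test the pressure equation with $\bv\in\V_0$, where the boundary term vanishes because $\gamma\bv\big|_{\Gamma_1}=0$. This gives $\mathfrak{d}(\bv,p_1-p_2)=\langle\f_1-\f_2,\bv\rangle-\langle\mathcal{F}(\bu_1)-\mathcal{F}(\bu_2),\bv\rangle$. Applying \eqref{eqn-inf-sup} and bounding each contribution of $\mathcal{F}(\bu_1)-\mathcal{F}(\bu_2)$—the linear and Darcy parts by \eqref{eqn-a-est-1}, the convective part by splitting $\mathfrak{b}(\bu_1,\bu_1,\bv)-\mathfrak{b}(\bu_2,\bu_2,\bv)=\mathfrak{b}(\bu_1-\bu_2,\bu_1,\bv)+\mathfrak{b}(\bu_2,\bu_1-\bu_2,\bv)$ and using \eqref{eqn-b-est-1}, and the Forchheimer and pumping parts via the elementary inequality $\big||\bw_1|^{r-1}\bw_1-|\bw_2|^{r-1}\bw_2\big|\leq C\big(|\bw_1|^{r-1}+|\bw_2|^{r-1}\big)|\bw_1-\bw_2|$ followed by H\"older's inequality and the embedding $\V\hookrightarrow\L^{r+1}$—all controlled through the uniform a priori bound $\widetilde{K}$ of Proposition \ref{prop-energy-est}, yields $\vartheta\|p_1-p_2\|_{\L^2}\leq\|\f_1-\f_2\|_{\V^{*}}+C(\widetilde{K})\|\bu_1-\bu_2\|_{\V}$. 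Combining this with the Lipschitz estimate for $\bu$ established in Theorem \ref{thm-unique} gives $\|p_1-p_2\|_{\L^2}\leq C\|\f_1-\f_2\|_{\V^{*}}$, the asserted local Lipschitz continuity.

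The hard part will be the pressure-recovery step: one must pass from the \emph{inequality} formulation of Problem \ref{prob-hemi-2} to the \emph{equality} form of the governing equation, which is available only through the subdifferential inclusion produced by the surjectivity theorem, and then correctly identify $\V_{\sigma}$ as the kernel of the divergence on $\V$ so that the inf-sup/closed-range machinery applies to extract a genuine element of $Q$. A secondary delicate point is keeping the nonlinear Lipschitz constant explicit in $\widetilde{K}$, since the word \emph{local} here means precisely that the constant depends on the a priori bound of the solutions and hence on the size of the data.
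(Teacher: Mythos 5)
Your proposal is correct, and the supporting estimates (a priori bound $\widetilde{K}$, splitting of the convective term, Taylor-type bound on the Forchheimer and pumping terms, and the final combination with the velocity Lipschitz estimate from Theorem \ref{thm-unique}) coincide with what the paper does. The one place where you genuinely diverge is the pressure-recovery step. The paper never invokes the subdifferential selection $\boldsymbol{\eta}$ from the existence proof: it first observes that testing \eqref{eqn-hemi-2} with $\pm\bv$ for $\bv\in\V_{0,\sigma}$ (where $\bv_{\tau}=\boldsymbol{0}$ on $\Gamma_1$, so the $j^{0}$ term drops) turns the inequality into an equality on $\V_{0,\sigma}$, applies the inf-sup condition on the pair $(\V_0,Q)$ to produce $p$ satisfying \eqref{eqn-pressure-1} for $\bv\in\V_0$ only, and then extends the inequality to all of $\V$ by decomposing an arbitrary $\bv$ as $\bv_1+\bv_2$ with $\bv_1\in\V_0$ matching the divergence data and $\bv_2\in\V_{\sigma}$, testing \eqref{eqn-hemi-2} with $\bv_2$. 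You instead carry the equality $\mathcal{F}(\bu)+\gamma^{*}\boldsymbol{\eta}=\f$ in $\V_{\sigma}^{*}$ from the surjectivity step, identify $\V_{\sigma}$ as the kernel of the divergence on all of $\V$ (your remark that $\int_{\Gamma}\bv\cdot\boldsymbol{n}\,\d S=0$ kills the constant is exactly the point that must be checked), and apply the closed-range theorem once on the full space to get $p$ and the inequality on $\V$ in a single stroke. Your route is shorter and avoids the auxiliary decomposition, at the price of depending on the particular selection $\boldsymbol{\eta}$ produced in Theorem \ref{thm-main-hemi}; the paper's route works intrinsically from the hemivariational inequality itself, which is why it needs the two-stage argument. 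Both correctly reduce uniqueness of $\bu$ to Theorem \ref{thm-unique} and uniqueness of $p$ to injectivity of $q\mapsto\mathfrak{d}(\cdot,q)$, and both give the same local Lipschitz bound $\|p_1-p_2\|_{Q}\leq C(\widetilde{K})\|\f_1-\f_2\|_{\V^{*}}$.
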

\begin{proof}
Let $\bu\in\V$ be  the unique solution to Problem 	\ref{prob-hemi-1}. Then   for all $ \bv\in \V_{0,\sigma}$, we have
\begin{align*}
	\mu \mathfrak{a}(\bu,\bv)+\mathfrak{b}(\bu,\bu,\bv)+\alpha \mathfrak{a}_0(\bu,\bv)+\beta \mathfrak{c}(\bu,\bv)+\kappa \mathfrak{c}_0(\bu,\bv)=	\langle\f,\bv\rangle.
\end{align*}
Owing to the inf-sup condition \eqref{eqn-inf-sup} and a classical result from functional analysis (cf. \cite[Theorem 2.2]{JSHNJW}), one can guarantee the existence of a function $p\in Q$ such that for all $\bv\in\V_0$ 
\begin{align}\label{eqn-pressure-1}
	\mu \mathfrak{a}(\bu,\bv)+\mathfrak{b}(\bu,\bu,\bv)+\alpha \mathfrak{a}_0(\bu,\bv)+\beta \mathfrak{c}(\bu,\bv)+\kappa \mathfrak{c}_0(\bu,\bv)+\mathfrak{d}(\bv,p)=	\langle\f,\bv\rangle.
\end{align}
Given an arbitrary $\bv \in \V$, the inf-sup condition \eqref{eqn-inf-sup} ensures the existence of a function $\bv_1 \in \V_0$ such that
\begin{align}\label{eqn-pressure-2}
\mathfrak{d}(\bv_1, q) = \mathfrak{d}(\bv, q) \  \text{ for all } \ q \in Q.
\end{align}
Let $\bv_2 = \bv - \bv_1$. Then $\bv_2 \in \V$, and it satisfies
$$
\mathfrak{d}(\bv_2, q) = 0 \ \text{ for all } \ q \in Q.
$$
The above expression directly implies that $\bv_2 \in \V_{\sigma}$. We now select $\bv_2 \in \V_{\sigma}$ as the test function in \eqref{eqn-hemi-2} and obtain
\begin{align}
		\mu \mathfrak{a}(\bu,\bv-\bv_1)+\mathfrak{b}(\bu,\bu,\bv-\bv_1)+\alpha \mathfrak{a}_0(\bu,\bv-\bv_1)&+\beta \mathfrak{c}(\bu,\bv-\bv_1)+\kappa \mathfrak{c}_0(\bu,\bv-\bv_1)\nonumber\\+\int_{\Gamma_1}j^0(\bu_{\tau};\bv_{\tau}-\bv_{1,\tau})\d S&\geq 	\langle\f,\bv-\bv_1\rangle.
\end{align}
It is important to observe that $\bv_1=0$ on $\Gamma_1$. Consequently, using inequalities \eqref{eqn-pressure-1} and \eqref{eqn-pressure-2} in succession, we obtain from the preceding inequality that
\begin{align}
	&	\mu \mathfrak{a}(\bu,\bv)+\mathfrak{b}(\bu,\bu,\bv)+\alpha \mathfrak{a}_0(\bu,\bv)+\beta \mathfrak{c}(\bu,\bv)+\kappa \mathfrak{c}_0(\bu,\bv)+\int_{\Gamma_1}j^0(\bu_{\tau};\bv_{\tau})\d S\nonumber\\&\geq 	\langle\f,\bv-\bv_1\rangle+	\mu \mathfrak{a}(\bu,\bv_1)+\mathfrak{b}(\bu,\bu,\bv_1)+\alpha \mathfrak{a}_0(\bu,\bv_1)+\beta \mathfrak{c}(\bu,\bv_1)+\kappa \mathfrak{c}_0(\bu,\bv_1)\nonumber\\&=-\mathfrak{d}(\bv_1,p)+\langle\f,\bv\rangle\nonumber\\&=-\mathfrak{d}(\bv,p)+\langle\f,\bv\rangle\ \text{ for all }\ \bv\in\V,
\end{align}
that is, the  first inequality in \eqref{eqn-hemi-1} holds. It can be seen that the second equation in \eqref{eqn-hemi-1} holds as a direct consequence of the fact that $\bu\in\V_{\sigma}$. 

 We now derive an estimate for $\|p\|_Q$. Using the inf-sup condition \eqref{eqn-inf-sup} together with \eqref{eqn-pressure-1}, we obtain
\begin{align*}
	\vartheta\|p\|_{Q}\leq 	\sup_{\bv\in\V_0}\frac{1}{\|\bv\|_{\V}}[\langle\f,\bv\rangle -\mu \mathfrak{a}(\bu,\bv)-\mathfrak{b}(\bu,\bu,\bv)-\alpha \mathfrak{a}_0(\bu,\bv)-\beta \mathfrak{c}(\bu,\bv)-\kappa \mathfrak{c}_0(\bu,\bv)]. 
\end{align*}
Using \eqref{eqn-a-est-1}, \eqref{eqn-b-est-1}, \eqref{eqn-c-est-1}, and \eqref{eqn-bound} in the above expression, we deduce 
\begin{align}
	\vartheta\|p\|_{Q}&\leq\|\f\|_{\V^*}+(2\mu+\alpha C_k^2)\|\bu\|_{\V}+C_b\|\bu\|_{\V}^2+C\beta \|\bu\|_{\L^{r+1}}^r+C_s|\kappa||\mathcal{O}|^{\frac{r-q}{(q+1)(r+1)}}\|\bu\|_{\L^{q+1}}^q\nonumber\\&\leq  \|\f\|_{\V^*}+(2\mu+\alpha C_k^2)\widetilde{K}^{1/2}+C_b\widetilde{K}+C_s\beta\widetilde{K}^{\frac{r}{r+1}}+C_s|\kappa||\mathcal{O}|^{\frac{r-q}{r+1}}\widetilde{K}^{\frac{q}{r+1}},
\end{align}
where we have used the fact that for $1\leq r\leq\frac{2d}{(d-2)^+}$, 
\begin{align}
\|\bv\|_{\L^{r+1}}\leq C_s\|\bv\|_{\V}\ \text{ for all }\ \bv\in\V,
\end{align}
by an application of Sobolev's inequality. 

Finally, we establish the uniqueness of the solution $(\bu, p)$ to Problem \ref{prob-hemi-1}. Since the velocity component $\bu$ satisfies equation \eqref{eqn-hemi-2}, and Problem \ref{prob-hemi-2} admits a unique solution, it follows that $\bu$ is uniquely determined. Suppose $(\bu, p_1)$ and $(\bu, p_2)$ are two solutions to Problem \ref{prob-hemi-1} corresponding to the same forcing term $\f$. Then, using \eqref{eqn-pressure-1}, we deduce that
\begin{align*}
	\mathfrak{d}(\bv,p_1-p_2)=	0 \ \text{ for all }\ \bv\in\V_0. 
\end{align*}
Applying the inf-sup condition \eqref{eqn-inf-sup} with $q = p_1 - p_2$, we conclude that $\|p_1 - p_2\|_Q = 0$, and hence $p_1 = p_2$ in $Q$.

We now show that the pressure component $p$ depends locally Lipschitz continuously on the external force $\f \in \V^*$. Suppose $(\bu_1, p_1)$ and $(\bu_2, p_2)$ are two solutions to Problem \ref{prob-hemi-1} corresponding to the forces $\f_1$ and $\f_2$, respectively. Then, by equation \eqref{eqn-pressure-1}, we have for all $\bv \in \V_0$,
\begin{align*}
		\mu \mathfrak{a}(\bu_1,\bv)+\mathfrak{b}(\bu_1,\bu_1,\bv)+\alpha \mathfrak{a}_0(\bu_1,\bv)+\beta \mathfrak{c}(\bu_1,\bv)+\kappa \mathfrak{c}_0(\bu_1,\bv)+\mathfrak{d}(\bv,p_1)&=	\langle\f_1,\bv\rangle,\\
			\mu \mathfrak{a}(\bu_2,\bv)+\mathfrak{b}(\bu_2,\bu_2,\bv)+\alpha \mathfrak{a}_0(\bu_2,\bv)+\beta \mathfrak{c}(\bu_2,\bv)+\kappa \mathfrak{c}_0(\bu_2,\bv)+\mathfrak{d}(\bv,p_2)&=	\langle\f_2,\bv\rangle.
\end{align*}
By using  \eqref{eqn-a-est-1} and \eqref{eqn-b-est-1} in the above expression, we deduce 
\begin{align}\label{eqn-pressure-3}
&	\mathfrak{d}(\bv,p_1-p_2)\nonumber\\&=	\langle\f_1-\f_2,\bv\rangle-\mu \mathfrak{a}(\bu_1-\bu_2,\bv) -\mathfrak{b}(\bu_1-\bu_2,\bu_1,\bv)-\mathfrak{b}(\bu_2,\bu_1-\bu_2,\bv)\nonumber\\&\quad-\alpha \mathfrak{a}(\bu_1-\bu_2,\bv)-\beta[\mathfrak{c}(\bu_1,\bv)-\mathfrak{c}(\bu_2,\bv)]-\kappa[\mathfrak{c}_0(\bu_1,\bv)-\mathfrak{c}_0(\bu_2,\bv)]\nonumber\\&\leq \big[\|\f_1-\f_2\|_{\V^*}+(2\mu+\alpha C_k^2)\|\bu_1-\bu_2\|_{\V}+C_b\|\bu_1-\bu_2\|_{\V}(\|\bu_1\|_{\V}+\|\bu_2\|_{\V})\big]\|\bv\|_{\V}\nonumber\\&\quad-\beta[\mathfrak{c}(\bu_1,\bv)-\mathfrak{c}(\bu_2,\bv)]-\kappa[\mathfrak{c}_0(\bu_1,\bv)-\mathfrak{c}_0(\bu_2,\bv)].
\end{align}
An application of  Taylor's formula and the bound \eqref{eqn-bound} yield
\begin{align}\label{eqn-pressure-4}
	|\mathfrak{c}(\bu_1,\bv)-\mathfrak{c}(\bu_2,\bv)|&=\bigg|\bigg\langle\int_0^1\mathcal{C}'(\theta\bu_1+(1-\theta)\bu_2)(\bu_1-\bu_2)\d\theta,\bv\bigg\rangle\bigg|\nonumber\\&
	\leq r\left(\|\bu_1\|_{\L^{r+1}}+\|\bu_2\|_{\L^{r+1}}\right)^{r-1}\|\bu_1-\bu_2\|_{\L^{r+1}}\|\bv\|_{\L^{r+1}}
	\nonumber\\&\leq C_s^2r2^{r-1}\widetilde{K}^{\frac{r-1}{r+1}}\|\bu_1-\bu_2\|_{\V}\|\bv\|_{\V}. 
\end{align}
A similar calculation leads to 
\begin{align}\label{eqn-pressure-5}
		|\mathfrak{c}_0(\bu_1,\bv)-\mathfrak{c}_0(\bu_2,\bv)|&\leq C_s^2|\mathcal{O}|^{\frac{r-q}{r+1}}q2^{q-1}\widetilde{K}^{\frac{q-1}{r+1}}\|\bu_1-\bu_2\|_{\V}\|\bv\|_{\V}. 
\end{align}
Let us now consider the case $d \in \{2, 3\}$ with $r > 3$. Substituting equations \eqref{eqn-pressure-3} and \eqref{eqn-pressure-4} into \eqref{eqn-pressure-5}, we obtain
\begin{align}\label{eqn-pressure-6}
		&\mathfrak{d}(\bv,p_1-p_2)\nonumber\\&\leq  \big[\|\f_1-\f_2\|_{\V^*}+\big(2\mu+\alpha C_k^2+2C_b\widetilde{\kappa}+C_s^2r2^{r-1}\widetilde{K}^{\frac{r-1}{r+1}}+C_s^2|\mathcal{O}|^{\frac{r-q}{r+1}}q2^{q-1}\widetilde{K}^{\frac{q-1}{r+1}}\big)\nonumber\\&\qquad\times\|\bu_1-\bu_2\|_{\V}\big]\|\bv\|_{\V}\nonumber\\&\leq \bigg[1+\frac{2}{(2\mu-\delta_1\lambda_0^{-1})}\big(2\mu+\alpha C_k^2+2C_b\widetilde{\kappa}+C_s^2r2^{r-1}\widetilde{K}^{\frac{r-1}{r+1}}+C_s^2|\mathcal{O}|^{\frac{r-q}{r+1}}q2^{q-1}\widetilde{K}^{\frac{q-1}{r+1}}\big)\bigg]\nonumber\\&\quad\times\|\f_1-\f_2\|_{\V^*}\|\bv\|_{\V},
\end{align}
where we have used \eqref{eqn-differece-4} also. Now the inf-sup condition given in  \eqref{eqn-inf-sup} leads to 
\begin{align}
	&\|p-p_0\|_Q\nonumber\\&\leq \sup_{\bv\in\V_0}\frac{\mathfrak{d}(\bv,p-p_0)}{\|\bv\|_{\V}}\nonumber\\&\leq \bigg[1+\frac{2}{(2\mu-\delta_1\lambda_0^{-1})}\big(2\mu+2C_b\widetilde{\kappa}+C_s^2r2^{r-1}\widetilde{K}^{\frac{r-1}{r+1}}+C_s^2|\mathcal{O}|^{\frac{r-q}{r+1}}q2^{q-1}\widetilde{K}^{\frac{q-1}{r+1}}\big)\bigg]\|\f_1-\f_2\|_{\V^*}.
\end{align}
Therefore, the pressure component $p\in Q$ depends locally Lipschitz continuously on the data $\f\in\V^*$. The case of $d\in\{2,3\}$ with $r\in[1,3]$ can be treated in a similar way. 
\end{proof}

	\section{Mixed finite element method for CBF  hemivariational inequality}\label{sec4}\setcounter{equation}{0}
Under the assumptions of Theorem \ref{thm-unique}, which ensure the uniqueness of solutions to Problems \ref{prob-hemi-1} and \ref{prob-hemi-2}, we now proceed to apply the mixed finite element method to numerically solve Problem \ref{prob-hemi-1}.

For simplicity, let us assume that $\mathcal{O}$ is a polygonal (in 2D) or polyhedral (in 3D) domain, and consider ${ \mathcal{T}^h }$ to be a regular family of finite element meshes of $\overline{\mathcal{O}}$. Associated with each mesh $\mathcal{T}^h$, we define finite element spaces $\V^h$ and $Q^h$ that approximate the continuous spaces $\V$ and $Q$, respectively. Let $\V^{h}_{0} := \V^h \cap \H_0^1(\mathcal{O})$. We further assume that the discrete inf-sup (or LBB) condition holds, that is,  there exists a constant $\vartheta > 0$, independent of the mesh parameter $h > 0$, such that for all $q^h \in Q^h$, the following condition is satisfied uniformly:
	\begin{align}\label{eqn-inf-sup-discrete}
		\vartheta\|q^h\|_{\mathrm{L}^2}\leq 	\sup_{\bv^h\in\V_0^h}\frac{\mathfrak{d}(\bv^h,q^h)}{\|\bv^h\|_{\V}} \ \text{ for all }\ q^h\in Q^h.
	\end{align}
	The finite element formulation corresponding to Problem \ref{prob-hemi-1} is given as follows:
		\begin{problem}\label{prob-hemi-3}
		Find $\bu^h\in\V^h$ and $p^h\in Q^h$  such that
		\begin{equation}\label{eqn-discrete-hemi}
			\left\{
			\begin{aligned}
				\mu \mathfrak{a}(\bu^h,\bv^h)+\mathfrak{b}(\bu^h,\bu^h,\bv^h)+\alpha \mathfrak{a}_0(\bu^h,\bv^h)&+\beta \mathfrak{c}(\bu^h,\bv^h)+\kappa \mathfrak{c}_0(\bu^h,\bv^h)+\mathfrak{d}(\bv^h,p^h)\\+\int_{\Gamma_1}j^0(\bu_{\tau}^h;\bv_{\tau}^h)\d S&\geq 	(\f,\bv^h)_{\L^2(\mathcal{O})} \ \text{ for all }\ \bv^h\in \V^h,\\
				\mathfrak{d}(\bu^h,q^h)&=0\ \text{ for all }\ q^h\in Q^h. 
			\end{aligned}\right.
		\end{equation}
	\end{problem}
	Analogous to the continuous setting, we introduce a subspace of $\V^h$ defined as follows:
	\begin{align}
		\V^h_{\sigma}:=\left\{\bv^h\in\V^h: \mathfrak{d}(\bv^h,q^h)=0\ \text{ for all }\ q^h\in Q^h\right\},
	\end{align}
	and present a simplified version of Problem \ref{prob-hemi-3}.
		\begin{problem}\label{prob-hemi-4}
		Find $\bu^h\in\V_{\sigma}^h$ such that 
		\begin{equation}\label{eqn-hemi-4}
			\left\{
			\begin{aligned}
				\mu \mathfrak{a}(\bu^h,\bv^h)+\mathfrak{b}(\bu^h,\bu^h,\bv^h)+\alpha \mathfrak{a}_0(\bu^h,\bv^h)&+\beta \mathfrak{c}(\bu^h,\bv^h)+\kappa \mathfrak{c}_0(\bu^h,\bv^h)\\+\int_{\Gamma_1}j^0(\bu_{\tau}^h;\bv_{\tau}^h)\d S&\geq 	\langle\f,\bv^h\rangle \ \text{ for all }\ \bv^h\in \V^h_{\sigma}.
			\end{aligned}\right.
		\end{equation}
	\end{problem}
	Under the assumptions of Theorem \ref{thm-unique} and the discrete inf-sup condition given in \eqref{eqn-inf-sup-discrete}, it can be shown similarly to Theorems \ref{thm-unique} and \ref{thm-equivalent} that Problems \ref{prob-hemi-3} and \ref{prob-hemi-4} are equivalent and each possesses a unique solution. Additionally, the discrete counterpart of Proposition \ref{prop-energy-est} is given by
	\begin{align}\label{eqn-discrete-bound}
	\| \bu^h \|_{\V} ^2+\|\bu^h\|_{\L^{r+1}}^{r+1}\leq \widetilde{K}, 
	\end{align}
		where $\widetilde{K}$ is defined in \eqref{eqn-bound}. 
		
	Let us now turn our attention to estimating the error.
	We aim to derive the following C\'ea-type inequality, which provides an error bound for the finite element approximation $(\bu^h, p^h)$.

	\begin{theorem}\label{thm-cea}
		Under the assumptions of Theorem \ref{thm-unique}, we have 
		\begin{align}\label{eqn-cea}
			\|\bu-\bu^h\|_{\V}+\|p-p^h\|_Q&\leq C\inf_{\bv^h\in\V^h}\left(\|\bu-\bv^h\|_{\V}+\|\bu_{\tau}-\bv^h_{\tau}\|_{\L^2(\Gamma_1)}^{1/2}\right)\nonumber\\&\quad+C\inf_{q^h\in Q^h}\|p-q^h\|_Q.
		\end{align}
	\end{theorem}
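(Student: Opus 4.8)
The plan is to establish the velocity bound first and then recover the pressure through the discrete inf--sup condition, converting at the very end the approximation infimum over the discretely divergence--free space $\V^h_{\sigma}$ into one over $\V^h$. Throughout I will use that $\mathcal{F}$ is locally Lipschitz on bounded sets (a consequence of \eqref{eqn-a-est-1}, \eqref{eqn-b-est-1}, the Gateaux formula \eqref{Gaetu}, and the Sobolev embedding $\V\hookrightarrow\L^{r+1}$ valid for $1\le r\le\frac{2d}{(d-2)^+}$), together with the uniform boundedness of both $\bu$ and $\bu^h$ supplied by Proposition \ref{prop-energy-est} and its discrete version \eqref{eqn-discrete-bound}. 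Fix $\bw^h\in\V^h_{\sigma}$ and $q^h\in Q^h$. I would test the continuous inequality \eqref{eqn-hemi-1} with $\bv=\bu^h-\bu\in\V$ and the reduced discrete inequality \eqref{eqn-hemi-4} with $\bv^h=\bw^h-\bu^h\in\V^h_{\sigma}$, and add the two. After cancelling the $\f$--terms and inserting $\pm\langle\mathcal{F}(\bu),\bu-\bw^h\rangle$, this yields an upper bound for $\langle\mathcal{F}(\bu)-\mathcal{F}(\bu^h),\bu-\bu^h\rangle$ built from: (i) the operator term $\langle\mathcal{F}(\bu)-\mathcal{F}(\bu^h),\bu-\bw^h\rangle$, controlled by $C\|\bu-\bu^h\|_{\V}\|\bu-\bw^h\|_{\V}$; (ii) the residual $\langle\f-\mathcal{F}(\bu),\bu-\bw^h\rangle$, which by \eqref{eqn-hemi-1} is $\le\mathfrak{d}(\bu-\bw^h,p)+\int_{\Gamma_1}j^0(\bu_{\tau};\bu_{\tau}-\bw^h_{\tau})\d S$; (iii) the coupling $\mathfrak{d}(\bu^h-\bu,p)$; and (iv) the boundary pair $\int_{\Gamma_1}\big[j^0(\bu_{\tau};\bu^h_{\tau}-\bu_{\tau})+j^0(\bu^h_{\tau};\bw^h_{\tau}-\bu^h_{\tau})\big]\d S$.

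The decisive step is the treatment of the nonsmooth boundary contributions in (iv), and this is where I expect the main difficulty to lie. Using the subadditivity \eqref{eqn-subaddtive} I would split $j^0(\bu^h_{\tau};\bw^h_{\tau}-\bu^h_{\tau})\le j^0(\bu^h_{\tau};\bu_{\tau}-\bu^h_{\tau})+j^0(\bu^h_{\tau};\bw^h_{\tau}-\bu_{\tau})$. The symmetric pair $j^0(\bu_{\tau};\bu^h_{\tau}-\bu_{\tau})+j^0(\bu^h_{\tau};\bu_{\tau}-\bu^h_{\tau})$ is then governed by the relaxed monotonicity \eqref{eqn-alternative}, producing $\delta_1\|\bu_{\tau}-\bu^h_{\tau}\|_{\L^2(\Gamma_1)}^2\le\delta_1\lambda_0^{-1}\|\bu-\bu^h\|_{\V}^2$ via the trace inequality \eqref{eqn-trace}, while the two leftover directional terms $j^0(\bu^h_{\tau};\bw^h_{\tau}-\bu_{\tau})$ and $j^0(\bu_{\tau};\bu_{\tau}-\bw^h_{\tau})$ are estimated by the growth bound of Proposition \ref{prop-clarke}(1) with (H3), giving $\le C\|\bu_{\tau}-\bw^h_{\tau}\|_{\L^2(\Gamma_1)}$ thanks to the boundedness of $\bu$ and $\bu^h$. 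For the pressure couplings I would observe $\mathfrak{d}(\bu,q^h)=\mathfrak{d}(\bu^h,q^h)=0$ and $\mathfrak{d}(\bu,p)=0$, so each pairing reduces to $\mathfrak{d}(\cdot,p-q^h)$ and is bounded by $C\|\cdot\|_{\V}\|p-q^h\|_Q$.

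Now I would insert the lower bound for $\langle\mathcal{F}(\bu)-\mathcal{F}(\bu^h),\bu-\bu^h\rangle$ coming from the monotonicity estimates of Theorem \ref{thm-unique} (the relevant case among $r>3$, $r=3$, $1\le r\le 3$, the last using $\|\cdot\|_{\L^4}$ bounded by $\widetilde{K}$ from \eqref{eqn-bound}). The term $\delta_1\lambda_0^{-1}\|\bu-\bu^h\|_{\V}^2$ is absorbed because $\mu>\frac{\delta_1}{2\lambda_0}$, and the lower--order $\H$--terms are absorbed by the hypotheses on $\alpha$; Young's inequality applied to the cross terms then yields $\|\bu-\bu^h\|_{\V}^2\le C\big(\|\bu-\bw^h\|_{\V}^2+\|p-q^h\|_Q^2+\|\bu_{\tau}-\bw^h_{\tau}\|_{\L^2(\Gamma_1)}\big)$. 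Taking square roots is precisely what turns the last boundary term into the half--power $\|\bu_{\tau}-\bw^h_{\tau}\|_{\L^2(\Gamma_1)}^{1/2}$ appearing in \eqref{eqn-cea}.

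For the pressure I would subtract the continuous and discrete momentum identities restricted to $\V_0^h\subset\V_0$, on which the $j^0$ term vanishes since such test functions have zero tangential trace on $\Gamma_1$; this gives $\mathfrak{d}(\bv^h,p^h-q^h)=\langle\mathcal{F}(\bu)-\mathcal{F}(\bu^h),\bv^h\rangle+\mathfrak{d}(\bv^h,p-q^h)$ for all $\bv^h\in\V_0^h$. The discrete inf--sup condition \eqref{eqn-inf-sup-discrete} then bounds $\|p^h-q^h\|_Q$ by $C\|\bu-\bu^h\|_{\V}+C\|p-q^h\|_Q$, and the triangle inequality with the velocity estimate controls $\|p-p^h\|_Q$ by the same right--hand side. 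Adding the two bounds and taking $\inf_{q^h\in Q^h}$ gives the estimate with $\inf_{\bw^h\in\V^h_{\sigma}}$. Finally, to pass to $\inf_{\bv^h\in\V^h}$ I would use a Fortin correction: given $\bv^h\in\V^h$, condition \eqref{eqn-inf-sup-discrete} provides $\bz^h\in\V_0^h$ with $\mathfrak{d}(\bz^h,q^h)=\mathfrak{d}(\bu-\bv^h,q^h)$ for all $q^h$ and $\|\bz^h\|_{\V}\le C\|\bu-\bv^h\|_{\V}$, so $\bw^h:=\bv^h+\bz^h\in\V^h_{\sigma}$ satisfies $\|\bu-\bw^h\|_{\V}\le C\|\bu-\bv^h\|_{\V}$; since $\bz^h\in\V_0^h$ has vanishing tangential trace on $\Gamma_1$, one has $\bw^h_{\tau}=\bv^h_{\tau}$ there, so the boundary term is preserved and \eqref{eqn-cea} follows.
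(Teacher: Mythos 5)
Your proposal is correct and follows essentially the same strategy as the paper: test the continuous inequality \eqref{eqn-hemi-1} with $\bv=\bu^h-\bu$ and with $\bv=\bu-\bv^h$, test the discrete inequality with $\bv^h-\bu^h$, control the three boundary terms by sub-additivity \eqref{eqn-subaddtive} plus the relaxed monotonicity \eqref{eqn-alternative} (yielding the $\delta_1\lambda_0^{-1}\|\bu-\bu^h\|_{\V}^2$ term absorbed by $\mu>\frac{\delta_1}{2\lambda_0}$ and the first-power boundary approximation term that becomes the half-power after taking square roots), invoke the case-by-case local monotonicity estimates from Theorem \ref{thm-unique}, and recover the pressure from the discrete inf-sup condition \eqref{eqn-inf-sup-discrete}. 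The only structural deviation is that you work in the reduced space $\V^h_{\sigma}$ and pass back to $\inf_{\bv^h\in\V^h}$ via a Fortin-type lift $\bz^h\in\V_0^h$ (which is valid, and correctly preserves the tangential trace since $\bz^h$ vanishes on $\Gamma$), whereas the paper tests directly with arbitrary $\bv^h\in\V^h$ and absorbs the resulting coupling $\mathfrak{d}(\bu-\bv^h,p-p^h)$ into the term $K_d$ using the already-established pressure bound; the two routes cost the same and deliver the same constants up to inessential changes.
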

	\begin{proof}
	Referring back to equation \eqref{eqn-pressure-1}, we have   for all  $\bv\in\V_0$
	\begin{align}\label{eqn-equality}
		\mu \mathfrak{a}(\bu,\bv)+\mathfrak{b}(\bu,\bu,\bv)+\alpha \mathfrak{a}_0(\bu,\bv)+\beta \mathfrak{c}(\bu,\bv)+\kappa \mathfrak{c}_0(\bu,\bv)+\mathfrak{d}(\bv,p)=\langle\f,\bv\rangle.
	\end{align}
	The discrete counterpart of equation \eqref{eqn-equality}  can be derived from  \eqref{eqn-discrete-hemi}  for all $ \bv^h\in\V_0^h$: 
	\begin{align}\label{eqn-equality-1}
		\mu \mathfrak{a}(\bu^h,\bv^h)+\mathfrak{b}(\bu^h,\bu^h,\bv^h)+\alpha \mathfrak{a}_0(\bu^h,\bv^h)+\beta \mathfrak{c}(\bu^h,\bv^h)+\kappa \mathfrak{c}_0(\bu^h,\bv^h)+\mathfrak{d}(\bv^h,p^h)=\langle\f,\bv^h\rangle. 
	\end{align} 
	Let $q^h \in Q^h$ be arbitrary. Then, we can write using triangle inequality as 
	\begin{align}\label{eqn-triangle}
		\|p-p^h\|_{Q}\leq\|p-q^h\|_{Q}+\|q^h-p^h\|_{Q}. 
	\end{align}
	Using the discrete inf-sup condition given in \eqref{eqn-inf-sup-discrete}, we find  
	\begin{align}\label{eqn-inf-sup-discrete-1}
		\vartheta\|p^h-q^h\|_Q\leq \sup_{\bv^h\in\V^h_0}\frac{\mathfrak{d}(\bv^h,p^h-q^h)}{\|\bv^h\|_{\V}}. 
	\end{align}
	Using the linearity of the second variable in $\mathfrak{d}(\cdot,\cdot)$, we get 
	\begin{align}\label{eqn-difference-discrte-1}
		\mathfrak{d}(\bv^h,p^h-q^h)=\mathfrak{d}(\bv^h,p^h-p)+\mathfrak{d}(\bv^h,p-q^h),
		\end{align}
		and 
		\begin{align*}
			\mathfrak{d}(\bv^h,p^h-p)=\mathfrak{d}(\bv^h,p^h)-\mathfrak{d}(\bv^h,p).
		\end{align*}
	From   \eqref{eqn-equality} and \eqref{eqn-equality-1}, we further have 
	\begin{align*}
		\mathfrak{d}(\bv^h,p^h)&=\langle\f,\bv^h\rangle-\mu \mathfrak{a}(\bu^h,\bv^h)-\mathfrak{b}(\bu^h,\bu^h,\bv^h)-\alpha \mathfrak{a}_0(\bu^h,\bv^h)-\beta \mathfrak{c}(\bu^h,\bv^h)-\kappa \mathfrak{c}_0(\bu^h,\bv^h),\\
		\mathfrak{d}(\bv^h,p)&=\langle\f,\bv^h\rangle-\mu \mathfrak{a}(\bu,\bv^h)-\mathfrak{b}(\bu,\bu,\bv^h)-\alpha \mathfrak{a}_0(\bu,\bv^h)-\beta \mathfrak{c}(\bu,\bv^h)-\kappa \mathfrak{c}_0(\bu,\bv^h),
	\end{align*}
	so that 
	\begin{align*}
			\mathfrak{d}(\bv^h,p^h-p)&=\mu \mathfrak{a}(\bu-\bu^h,\bv^h)+\mathfrak{b}(\bu,\bu,\bu^h)-\mathfrak{b}(\bu^h,\bu^h,\bv^h)+\alpha \mathfrak{a}_0(\bu-\bu^h,\bv^h)\nonumber\\&\quad+\beta[\mathfrak{c}(\bu,\bv^h)-\mathfrak{c}(\bu^h,\bu)]+\kappa[\mathfrak{c}_0(\bu,\bv^h)-\mathfrak{c}_0(\bu^h,\bu)]\nonumber\\&=\mu \mathfrak{a}(\bu-\bu^h,\bv^h)+\mathfrak{b}(\bu,\bu-\bu^h,\bv^h)+\mathfrak{b}(\bu-\bu^h,\bu^h,\bv^h)+\alpha \mathfrak{a}_0(\bu-\bu^h,\bv^h)\nonumber\\&\quad+\beta\bigg<\int_0^1\mathcal{C}^{\prime}(\theta\bu+(1-\theta)\bu^h)\d\theta(\bu-\bu^h),\bv^h\bigg>\nonumber\\&\quad+\kappa\bigg<\int_0^1\mathcal{C}_0^{\prime}(\theta\bu+(1-\theta)\bu^h)\d\theta(\bu-\bu^h),\bv^h\bigg>.
	\end{align*}
	Using \eqref{eqn-inf-sup-discrete-1}, \eqref{eqn-difference-discrte-1} and a calculation similar to \eqref{eqn-pressure-6} yields 
	\begin{align}
		\vartheta\|p^h-q^h\|_Q&\leq \big(2\mu+\alpha C_k^2+C_\mathfrak{b}(\|\bu\|_{\V}+\|\bu^h\|_{\V})+C_s^2r(\|\bu\|_{\L^{r+1}}+\|\bu^h\|_{\L^{r+1}})^{r-1}\nonumber\\&\quad+C_s^2q(\|\bu\|_{\L^{q+1}}+\|\bu^h\|_{\L^{q+1}})^{q-1} \big)\|\bu-\bu^h\|_{\V}+C_k\|p-q^h\|_Q.
	\end{align}
	Since have the bounds given in \eqref{eqn-bound} and  \eqref{eqn-discrete-bound}, we immediately have 
	\begin{align}
		\vartheta\|p^h-q^h\|_Q&\leq \big(2\mu+\alpha C_k^2+2C_b\widetilde{\kappa}+C_s^2r2^{r-1}\widetilde{K}^{\frac{r-1}{r+1}}+C_s^2|\mathcal{O}|^{\frac{r-q}{r+1}}q2^{q-1}\widetilde{K}^{\frac{q-1}{r+1}}\big)\|\bu-\bu^h\|_{\V}\nonumber\\&\quad+C_k\|p-q^h\|_Q\nonumber\\&\leq C(\|\bu-\bu^h\|_{\V}+\|p-q^h\|_Q). 
	\end{align}
From \eqref{eqn-triangle}, we also infer 
\begin{align}\label{eqn-traingle-1}
		\|p-p^h\|_{Q}\leq C(\|\bu-\bu^h\|_{\V}+\|p-q^h\|_Q). 
\end{align}

On the other hand, for any $\bv^h\in\V^h$,  using \eqref{eqn-a-est-1}, we have
\begin{align}\label{eqn-error-1}
&	2\mu\|\bu-\bu^h\|_{\V}^2+\alpha\|\bu-\bu^h\|_{\H}^2\nonumber\\& \quad =\mu \mathfrak{a}(\bu-\bu^h,\bu-\bu^h)+\alpha \mathfrak{a}_0(\bu-\bu^h,\bu-\bu^h)
	\nonumber\\&\quad =\mu \mathfrak{a}(\bu,\bu-\bu^h)-\mu \mathfrak{a}(\bu^h,\bu-\bu^h)+\alpha \mathfrak{a}_0(\bu,\bu-\bu^h)-\alpha \mathfrak{a}_0(\bu^h,\bu-\bu^h)
		\nonumber\\&\quad =\mu \mathfrak{a}(\bu,\bu-\bu^h)-\mu \mathfrak{a}(\bu^h,\bu-\bv^h)+\mu \mathfrak{a}(\bu^h,\bu^h-\bv^h)\nonumber\\&\quad \quad +\alpha \mathfrak{a}_0(\bu,\bu-\bu^h)-\alpha \mathfrak{a}_0(\bu^h,\bu-\bv^h)+\alpha \mathfrak{a}_0(\bu^h,\bu^h-\bv^h). 
\end{align}
By taking $\bv=\bu^h-\bu$ in \eqref{eqn-hemi-1}, we obtain 
\begin{align}\label{eqn-error-2}
		&\mu \mathfrak{a}(\bu,\bu-\bu^h)+\alpha \mathfrak{a}_0(\bu,\bu-\bu^h)\nonumber\\& \quad \leq \mathfrak{b}(\bu,\bu,\bu^h-\bu)+\beta \mathfrak{c}(\bu,\bu^h-\bu)+\kappa \mathfrak{c}_0(\bu,\bu^h-\bu)\nonumber\\&\quad \quad+\mathfrak{d}(\bu^h-\bu,p)+\int_{\Gamma_1}j^0(\bu_{\tau};\bu^h_{\tau}-\bu_{\tau})\d S-	\langle\f,\bu^h-\bu\rangle. 
\end{align}
By substituting $\bv^h$ with $\bv^h - \bu^h$ in \eqref{eqn-discrete-hemi}, we get 
\begin{align}\label{eqn-error-3}
	&	\mu \mathfrak{a}(\bu^h,\bu^h-\bv^h)+\alpha \mathfrak{a}_0(\bu^h,\bu^h-\bv^h)\nonumber\\& \quad \leq \mathfrak{b}(\bu^h,\bu^h,\bv^h-\bu^h)+\beta \mathfrak{c}(\bu^h,\bv^h-\bu^h)+\kappa \mathfrak{c}_0(\bu^h,\bv^h-\bu^h)\nonumber\\&\quad \quad+\mathfrak{d}(\bv^h-\bu^h,p^h)+\int_{\Gamma_1}j^0(\bu_{\tau}^h;\bv^h_{\tau}-\bu^h_{\tau})\d S- 	\langle\f,\bv^h-\bu^h\rangle. 
	\end{align}
	Moreover, the bilinearity  of $\mathfrak{a}(\cdot,\cdot)$ gives 
	\begin{align}\label{eqn-error-4}
		-&\mu \mathfrak{a}(\bu^h,\bu-\bv^h)-\alpha \mathfrak{a}_0(\bu^h,\bu-\bv^h)\nonumber\\&=\mu \mathfrak{a}(\bu-\bu^h,\bu-\bv^h)+\alpha \mathfrak{a}_0(\bu-\bu^h,\bu-\bv^h)+\mu \mathfrak{a}(\bu,\bv^h-\bu)+\alpha \mathfrak{a}_0(\bu,\bv^h-\bu). 
	\end{align}
Let us take $\bv=\bu-\bv^h$ in \eqref{eqn-hemi-1} to find 
\begin{align}\label{eqn-error-5}
		&\mu \mathfrak{a}(\bu,\bv^h-\bu)+\alpha \mathfrak{a}_0(\bu,\bv^h-\bu)\nonumber\\&\leq \mathfrak{b}(\bu,\bu,\bu-\bv^h)+\beta \mathfrak{c}(\bu,\bu-\bv^h)+\kappa \mathfrak{c}_0(\bu,\bu-\bv^h)\nonumber\\&\quad+\mathfrak{d}(\bu-\bv^h,p)+\int_{\Gamma_1}j^0(\bu_{\tau};\bu_{\tau}-\bv^h_{\tau})\d S-\langle\f,\bu-\bv^h\rangle. 
\end{align}
Using \eqref{eqn-error-2}-\eqref{eqn-error-5} in \eqref{eqn-error-1}, we deduce 
\begin{align}\label{eqn-error-main}
	&2\mu\|\bu-\bu^h\|_{\V}^2+\alpha \|\bu-\bu^h\|_{\H}^2\nonumber\\&\leq \mu \mathfrak{a}(\bu-\bu^h,\bu-\bv^h)+\alpha \mathfrak{a}_0(\bu-\bu^h,\bu-\bv^h)+K_b+K_c+K_{c_0}+K_d+K_j,
\end{align}
where 
\begin{align*}
	K_b&=\mathfrak{b}(\bu,\bu,\bu^h-\bu)+ \mathfrak{b}(\bu^h,\bu^h,\bv^h-\bu^h)+\mathfrak{b}(\bu,\bu,\bu-\bv^h),\\
	K_c&=\beta[\mathfrak{c}(\bu,\bu^h-\bu)+\mathfrak{c}(\bu^h,\bv^h-\bu^h)+\mathfrak{c}(\bu,\bu-\bv^h)],\\
	K_{c_0}&=\kappa[\mathfrak{c}_0(\bu,\bu^h-\bu)+\mathfrak{c}_0(\bu^h,\bv^h-\bu^h)+\mathfrak{c}_0(\bu,\bu-\bv^h)],\\
	K_d&=\mathfrak{d}(\bu^h-\bu,p)+\mathfrak{d}(\bv^h-\bu^h,p^h)+\mathfrak{d}(\bu-\bv^h,p),\\
	K_j&=\int_{\Gamma_1}\Big[j^0(\bu_{\tau};\bu^h_{\tau}-\bu_{\tau})+j^0(\bu_{\tau}^h;\bv^h_{\tau}-\bu^h_{\tau})+j^0(\bu_{\tau};\bu_{\tau}-\bv^h_{\tau})\Big]\d S. 
\end{align*}
We rewrite $K_b$ as 
\begin{align}\label{eqn-error-b0}
	K_b&=\mathfrak{b}(\bu,\bu,\bu^h-\bv^h)+ \mathfrak{b}(\bu^h,\bu^h,\bv^h-\bu^h)\nonumber\\
	     &=-[\mathfrak{b}(\bu,\bu,\bu-\bu^h)-\mathfrak{b}(\bu^h,\bu^h,\bu-\bu^h)]+[\mathfrak{b}(\bu,\bu,\bu-\bv^h)-\mathfrak{b}(\bu^h,\bu^h,\bu-\bv^h)]\nonumber\\
	     &=-\mathfrak{b}(\bu-\bu^h,\bu^h,\bu-\bu^h)+\mathfrak{b}(\bu-\bu^h,\bu,\bu-\bv^h)+\mathfrak{b}(\bu^h,\bu-\bu^h,\bu-\bv^h),
\end{align}
where in the final step, we have used the fact that $\mathfrak{b}(\bu,\bu-\bu^h,\bu^h-\bu)=0$. Rest of the proof is divided into the following two different cases. 

\textbf{Case 1.} \emph{$d\in\{2,3\}$ with $r\in(3,\infty)$.}

We employ similar calculations as in \eqref{eqn-differece-2} to estimate the  term 
$-\mathfrak{b}(\bu-\bu^h,\bu^h,\bu-\bu^h)$ as follows:
\begin{align}\label{eqn-error-b1}
&	|\mathfrak{b}(\bu-\bu^h,\bu^h,\bu-\bu^h)|\nonumber\\&\leq 
\frac{(2\mu-\delta_1\lambda_0^{-1})}{2}\|\bu-\bu^h\|_{\V}^2+\frac{\beta}{4}\||\bu^h|^{\frac{r-1}{2}}(\bu-\bu^h)\|_{\H}^2+\widetilde{\varrho}_{1,r}\|\bu-\bu^h\|_{\H}^2.
\end{align}
Using the estimates \eqref{eqn-b-est-1}, \eqref{eqn-bound} and \eqref{eqn-discrete-bound}, and H\"older's inequality,  we estimate $\mathfrak{b}(\bu-\bu^h,\bu,\bu-\bv^h)+\mathfrak{b}(\bu^h,\bu-\bu^h,\bu-\bv^h)$ as 
\begin{align}\label{eqn-error-b2}
&	|\mathfrak{b}(\bu-\bu^h,\bu,\bu-\bv^h)+\mathfrak{b}(\bu^h,\bu-\bu^h,\bu-\bv^h)|
\nonumber\\&\leq  C_b\|\bu\|_{\V}\|\bu-\bu^h\|_{\V}\|\bu^h-\bv^h\|_{\V}+C_b\|\bu^h\|_{\V}\|\bu-\bu^h\|_{\V}\|\bu^h-\bv^h\|_{\V}\nonumber\\&\leq 2C_b\widetilde{K}\|\bu-\bu^h\|_{\V}\|\bu-\bv^h\|_{\V}\nonumber\\&\leq  \frac{(2\mu-\delta_1\lambda_0^{-1})}{8}\|\bu-\bu^h\|_{\V}^2+\frac{8C_b^2\widetilde{K}^2}{(2\mu-\delta_1\lambda_0^{-1})}\|\bu-\bv^h\|_{\V}^2. 
	\end{align}
	Using the estimates \eqref{eqn-error-b1} and \eqref{eqn-error-b2} in \eqref{eqn-error-b0}, we estimate $K_b$ as 
	\begin{align}\label{eqn-error-b}
		K_b&\leq \frac{5(2\mu-\delta_1\lambda_0^{-1})}{8}\|\bu-\bu^h\|_{\V}^2+\frac{\beta}{4}\||\bu^h|^{\frac{r-1}{2}}(\bu-\bu^h)\|_{\H}^2+\widetilde{\varrho}_{1,r}\|\bu-\bu^h\|_{\H}^2\nonumber\\&\quad+\frac{8C_b^2\widetilde{K}^2}{(2\mu-\delta_1\lambda_0^{-1})}\|\bu-\bv^h\|_{\V}^2. 
	\end{align}

Let us now rewrite $K_c$ as
\begin{align}\label{eqn-error}
	K_c&=\beta[\mathfrak{c}(\bu,\bu^h-\bv^h)+\mathfrak{c}(\bu^h,\bv^h-\bu^h)]\nonumber\\&=- \beta[\mathfrak{c}(\bu,\bu-\bu^h)-\mathfrak{c}(\bu^h,\bu-\bu^h)]
	+ \beta[\mathfrak{c}(\bu,\bu-\bv^h)-\mathfrak{c}(\bu^h,\bu-\bv^h)].
\end{align}
Applying  \eqref{2.23}, we infer 
\begin{align}\label{eqn-error-0}
	&\beta [\mathfrak{c}(\bu,\bu-\bu^h)-\mathfrak{c}(\bu^h,\bu-\bu^h)]\geq \frac{\beta}{2}\||\bu|^{\frac{r-1}{2}}(\bu-\bu^h)\|_{\H}^2+\frac{\beta}{2}\||\bu^h|^{\frac{r-1}{2}}(\bu-\bu^h)\|_{\H}^2.
\end{align}
We estimate $ \beta[\mathfrak{c}(\bu,\bu-\bv^h)-\mathfrak{c}(\bu^h,\bu-\bv^h)]$ using Taylor's formula and H\"older's inequality as 
\begin{align}\label{eqn-errror-1}
	 &\beta|\mathfrak{c}(\bu,\bu-\bv^h)-\mathfrak{c}(\bu^h,\bu-\bv^h)|\nonumber\\&=\beta\left|\left\langle\int_0^1\mathcal{C}^{\prime}(\theta\bu+(1-\theta)\bu^h)(\bu-\bu^h)\d\theta,(\bu-\bv^h)\right\rangle\right|\nonumber\\&\leq\beta r\left\langle\int_0^1|\theta\bu+(1-\theta)\bu^h|^{r-1}|\bu-\bu^h|\d\theta,|\bu-\bv^h| \right\rangle\nonumber\\&\leq \beta r\left(\|\bu\|_{\L^{r+1}}+\|\bu^h\|_{\L^{r+1}}\right)^{r-1}\|\bu-\bu^h\|_{\L^{r+1}}\|\bu-\bv^h\|_{\L^{r+1}}\nonumber\\&\leq C_s^2 2^{r-1}r\beta\widetilde{K}^{\frac{r-1}{r+1}}\|\bu-\bu^h\|_{\V}\|\bu-\bv^h\|_{\V}\nonumber\\&\leq \frac{(2\mu-\delta_1\lambda_0^{-1})}{16}\|\bu-\bu^h\|_{\V}^2+\frac{4(C_s^2 2^{r-1}r\beta\widetilde{K}^{\frac{r-1}{r+1}})^2}{(2\mu-\delta_1\lambda_0^{-1})}\|\bu-\bv^h\|_{\V}^2,
	\end{align}where we have used \eqref{eqn-bound} and \eqref{eqn-discrete-bound} also. 
	Substituting  \eqref{eqn-error-0} and \eqref{eqn-error-1} in \eqref{eqn-error}, we achieve 
	\begin{align}\label{eqn-error-6}
			K_c&\leq -\frac{\beta}{2}\||\bu|^{\frac{r-1}{2}}(\bu-\bu^h)\|_{\H}^2-\frac{\beta}{2}\||\bu^h|^{\frac{r-1}{2}}(\bu-\bu^h)\|_{\H}^2\nonumber\\&\quad +\frac{(2\mu-\delta_1\lambda_0^{-1})}{16}\|\bu-\bu^h\|_{\V}^2+\frac{4(C_s^2 2^{r-1}r\beta\widetilde{K}^{\frac{r-1}{r+1}})^2}{(2\mu-\delta_1\lambda_0^{-1})}\|\bu-\bv^h\|_{\V}^2.
	\end{align}
	
We rewrite $K_{c_0}$ as
	\begin{align}\label{eqn-error-7}
		K_{c_0}&=\kappa[\mathfrak{c}_0(\bu,\bu^h-\bv^h)+\mathfrak{c}_0(\bu^h,\bv^h-\bu^h)]\nonumber\\&=- \kappa[\mathfrak{c}_0(\bu,\bu-\bu^h)-\mathfrak{c}_0(\bu^h,\bu-\bu^h)]
		+ \kappa[\mathfrak{c}_0(\bu,\bu-\bv^h)-\mathfrak{c}_0(\bu^h,\bu-\bv^h)].
	\end{align}
Following similar calculations as in \eqref{eqn-est-c0-4}, 	we estimate $\kappa[\mathfrak{c}_0(\bu,\bu-\bu^h)-\mathfrak{c}_0(\bu^h,\bu-\bu^h)]$  as
	\begin{align*}
|\kappa||[\mathfrak{c}_0(\bu,\bu-\bu^h)-\mathfrak{c}_0(\bu^h,\bu-\bu^h)]|&\leq\frac{\beta}{2}\||\bu|^{\frac{r-1}{2}}(\bu-\bu^h)\|_{\H}^2+\frac{\beta}{4}\||\bu^h|^{\frac{r-1}{2}}(\bu-\bu^h)\|_{\H}^2\nonumber\\&\quad+(\varrho_{2,r}+\varrho_{3,r})\|\bu-\bu^h\|_{\H}^2. 
	\end{align*}
	A calculation similar to \eqref{eqn-errror-1} yields 
	\begin{align*}
	&	|\kappa||\mathfrak{c}_0(\bu,\bu-\bv^h)-\mathfrak{c}_0(\bu^h,\bu-\bv^h)|\nonumber\\&\leq  C_s^2|\mathcal{O}|^{\frac{r-q}{r+1}} 2^{q-1}q\beta\widetilde{K}^{\frac{q-1}{r+1}}\|\bu-\bu^h\|_{\V}\|\bu-\bv^h\|_{\V}\nonumber\\&\leq \frac{(2\mu-\delta_1\lambda_0^{-1})}{16}\|\bu-\bu^h\|_{\V}^2+\frac{4(C_s^2|\mathcal{O}|^{\frac{r-q}{r+1}} 2^{q-1}q\beta\widetilde{K}^{\frac{q-1}{r+1}})^2}{(2\mu-\delta_1\lambda_0^{-1})}\|\bu-\bv^h\|_{\V}^2. 
	\end{align*}
	Therefore, the equation \eqref{eqn-error-8} leads to 
	\begin{align}\label{eqn-error-8}
		K_{c_0}&\leq \frac{(2\mu-\delta_1\lambda_0^{-1})}{16}\|\bu-\bu^h\|_{\V}^2+\frac{\beta}{2}\||\bu|^{\frac{r-1}{2}}(\bu-\bu^h)\|_{\H}^2+\frac{\beta}{4}\||\bu^h|^{\frac{r-1}{2}}(\bu-\bu^h)\|_{\H}^2\nonumber\\&\quad+(\varrho_{2,r}+\varrho_{3,r})\|\bu-\bu^h\|_{\H}^2+\frac{4(C_s^2|\mathcal{O}|^{\frac{r-q}{r+1}} 2^{q-1}q\beta\widetilde{K}^{\frac{q-1}{r+1}})^2}{(2\mu-\delta_1\lambda_0^{-1})}\|\bu-\bv^h\|_{\V}^2. 
	\end{align}
Using the second  condition given in \eqref{eqn-hemi-1} and \eqref{eqn-discrete-hemi}, we deduce for any $q^h\in Q^h$
\begin{align*}
	K_d&=\mathfrak{d}(\bu^h-\bv^h,p-p^h)\nonumber=
	\mathfrak{d}(\bu^h,p)-\mathfrak{d}(\bv^h,p)+\mathfrak{b}(\bv^h,p^h) 
	\nonumber\\&= \mathfrak{d}(\bu^h-\bu,p-q^h)+\mathfrak{d}(\bu-\bv^h,p)+\mathfrak{d}(\bv^h-\bu,p^h)
	\nonumber\\&=  \mathfrak{d}(\bu^h-\bu,p-q^h)+\mathfrak{d}(\bu-\bv^h,p-p^h). 
\end{align*}
Therefore, by using the Cauchy-Schwarz inequality, \eqref{eqn-equiv-1}, \eqref{eqn-traingle-1}, and Young's inequality, it is immediate that 
\begin{align}\label{eqn-error-9}
	K_d&\leq C_k\left(\|\bu-\bu^h\|_{\V}\|p-q^h\|_Q+\|\bu-\bv^h\|_{\V}\|p-p^h\|_Q\right)\nonumber\\&\leq C(\|\bu-\bu^h\|_{\V}\|p-q^h\|_Q+\|\bu-\bv^h\|_{\V}\|\bu-\bu^h\|_{\V}+\|\bu-\bv^h\|_{\V}\|p-q^h\|_Q)\nonumber\\&\leq \frac{(2\mu-\delta_1\lambda_0^{-1})}{8}\|\bu-\bu^h\|_{\V}^2+C\left(\|p-q^h\|_Q^2+\|\bu-\bv^h\|_{\V}^2\right).
\end{align}
Using the sub-additivity property given in \eqref{eqn-subaddtive}, we get 
\begin{align*}
	j^0(\bu_{\tau}^h;\bv_{\tau}^h-\bu_{\tau}^h)\leq  	j^0(\bu_{\tau}^h;\bu_{\tau}-\bu_{\tau}^h) + 	j^0(\bu_{\tau}^h;\bv_{\tau}^h-\bu_{\tau}). 
\end{align*}
The above estimate, Hypothesis \ref{hyp-sup-j}, \eqref{eqn-alternative} and \eqref{eqn-trace} help  us to evaluate $K_j$ as 
\begin{align}\label{eqn-error-10}
	K_j&=\int_{\Gamma_1}\Big[j^0(\bu_{\tau};\bu^h_{\tau}-\bu_{\tau})+j^0(\bu_{\tau}^h;\bv^h_{\tau}-\bu^h_{\tau})+j^0(\bu_{\tau};\bu_{\tau}-\bv^h_{\tau})\Big]\d S\nonumber\\&\leq  \int_{\Gamma_1}\Big[j^0(\bu_{\tau};\bu^h_{\tau}-\bu_{\tau})	+j^0(\bu_{\tau}^h;\bu_{\tau}-\bu_{\tau}^h) + 	j^0(\bu_{\tau}^h;\bv_{\tau}^h-\bu_{\tau})+j^0(\bu_{\tau};\bu_{\tau}-\bv^h_{\tau})\Big]\d S\nonumber\\&\leq \int_{\Gamma_1} \left[\delta_1|\bu_{\tau}-\bu_{\tau}^h|^2+C\left(1+|\bu^h_{\tau}|+|\bu_{\tau}|\right)|\bu_{\tau}-\bv^h_{\tau}|\right]\d S\nonumber\\&\leq\delta_1\lambda_0^{-1}\|\bu-\bu^h\|_{\V}^2+C\left(|\Gamma_1|^{1/2}+\|\bu^h_{\tau}\|_{\L^2(\Gamma_1)}+\|\bu_{\tau}\|_{\L^2(\Gamma_1)}\right)\|\bu_{\tau}-\bu^h_{\tau}\|_{\L^2(\Gamma_1)}\nonumber\\&\leq \delta_1\lambda_0^{-1}\|\bu-\bu^h\|_{\V}^2+C\left(|\Gamma_1|^{1/2}+\lambda_0^{-1/2}\|\bu^h\|_{\V}+\lambda_0^{-1/2}\|\bu\|_{\V}\right)\|\bu_{\tau}-\bu^h_{\tau}\|_{\L^2(\Gamma_1)}. 
\end{align}
Using the estimates \eqref{eqn-bound} and \eqref{eqn-discrete-bound} in \eqref{eqn-error-10}, we deduce 
\begin{align}\label{eqn-error-11}
	K_j\leq \delta_1\lambda_0^{-1}\|\bu-\bu^h\|_{\V}^2+C\widetilde{K}\|\bu_{\tau}-\bu^h_{\tau}\|_{\L^2(\Gamma_1)}. 
\end{align}
Combining \eqref{eqn-error-b}, \eqref{eqn-error-6}, \eqref{eqn-error-8}, \eqref{eqn-error-9}, \eqref{eqn-error-9} and \eqref{eqn-error-11} and substituting the resultant  in \eqref{eqn-error-main}, we arrive at 
\begin{align}\label{eqn-error-12}
&	\frac{(2\mu-\delta_1\lambda_0^{-1})}{8}\|\bu-\bu^h\|_{\V}^2+(\alpha-(\widetilde{\varrho}_{1,r}+\varrho_{2,r}+\varrho_{3,r}))\|\bu-\bu^h\|_{\H}^2\nonumber\\&\leq C\left(\|\bu-\bv^h\|_{\V}^2+\|\bu_{\tau}-\bu^h_{\tau}\|_{\L^2(\Gamma_1)}+\|p-q^h\|_Q^2\right).
\end{align}
 For $\mu>\frac{\delta_1}{2\lambda_0}$ and $\alpha> (\widetilde{\varrho}_{1,r}+\varrho_{2,r}+\varrho_{3,r})$, using \eqref{eqn-traingle-1} and \eqref{eqn-error-12}, we finally have 
 \begin{align}\label{eqn-error-13}
 	&	\|\bu-\bu^h\|_{\V}^2+\|\bu-\bu^h\|_{\H}^2+\|p-p^h\|_Q^2\nonumber\\&\leq C\left(\|\bu-\bv^h\|_{\V}^2+\|\bu_{\tau}-\bu^h_{\tau}\|_{\L^2(\Gamma_1)}+\|p-q^h\|_Q^2\right),
 \end{align}
so that C\'ea’s inequality \eqref{eqn-cea} follows. Instead of the estimate \eqref{eqn-error-b1}, if one uses an estimate similar to \eqref{2.26}, then we can obtain \eqref{eqn-error-13}, provided $\mu>\frac{\delta_1}{2\lambda_0}$, $\alpha\geq (\widehat{\varrho}_{1}+\varrho_{2,r}+\varrho_{3,r})$ and $\beta\geq 4\widehat{\varrho}_{1}$.

\textbf{Case 2.} \emph{$d\in\{2,3\}$ with $r\in[1,3]$.} 
We can use an estimate similar to \eqref{2.21} to evaluate $|\mathfrak{b}(\bu-\bu^h,\bu^h,\bu-\bu^h)|$ as 
\begin{align}\label{eqn-error-14}
&	|\mathfrak{b}(\bu-\bu^h,\bu^h,\bu-\bu^h)|\nonumber\\&\leq \frac{(2\mu-\delta_1\lambda_0^{-1})}{4}\|\bu-\bu^h\|_{\V}^2+\widehat{\varrho}_4 \|\bu^h\|_{\L^4}^{\frac{8}{4-d}}\|\bu-\bu^h\|_{\H}^2\nonumber\\&\leq \frac{(2\mu-\delta_1\lambda_0^{-1})}{4}\|\bu-\bu^h\|_{\V}^2+\widehat{\varrho}_4(C_gC_k)^{\frac{8}{4-d}}\widetilde{K}^{\frac{4}{4-d}}\|\bu-\bu^h\|_{\H}^2, 
	\end{align}	
where we have used  \eqref{eqn-discrete-bound} and  $\widehat{\varrho}_4$ is defined in \eqref{eqn-varrhohat-1}.  Using \eqref{eqn-b-est-1}, \eqref{eqn-bound}, \eqref{eqn-discrete-bound} and \eqref{eqn-error-14},  we estimate $K_b$ given in \eqref{eqn-error-b0} as 
\begin{align}
	K_b&\leq \frac{(2\mu-\delta_1\lambda_0^{-1})}{4}\|\bu-\bu^h\|_{\V}^2+\widehat{\varrho}_4(C_gC_k)^{\frac{8}{4-d}}\widetilde{K}^{\frac{4}{4-d}}\|\bu-\bu^h\|_{\H}^2\nonumber\\&\quad+C_b\left(\|\bu\|_{\V}+\|\bu^h\|_{\V}\right)\|\bu-\bu^h\|_{\V}\|\bu^h-\bv^h\|_{\V}\nonumber\\&\leq \frac{(2\mu-\delta_1\lambda_0^{-1})}{4}\|\bu-\bu^h\|_{\V}^2+\widehat{\varrho}_4(C_gC_k)^{\frac{8}{4-d}}\widetilde{K}^{\frac{4}{4-d}}\|\bu-\bu^h\|_{\H}^2\nonumber\\&\quad+ 2C_b\widetilde{K}\|\bu-\bu^h\|_{\V}\|\bu-\bv^h\|_{\V}\nonumber\\&\leq \frac{3(2\mu-\delta_1\lambda_0^{-1})}{8}\|\bu-\bu^h\|_{\V}^2+\widehat{\varrho}_4(C_gC_k)^{\frac{8}{4-d}}\widetilde{K}^{\frac{4}{4-d}}\|\bu-\bu^h\|_{\H}^2\nonumber\\&\quad+\frac{8C_b^2\widetilde{K}^2}{(2\mu-\delta_1\lambda_0^{-1})}\|\bu-\bv^h\|_{\V}^2.
\end{align}
	Therefore, for $\mu>\frac{\delta_1}{2\lambda_0}$ and $\alpha>	\widehat{\varrho}_4(C_gC_k)^{\frac{8}{4-d}}\widetilde{K}^{\frac{4}{4-d}}+\varrho_{2,r}+\varrho_{3,r}$,  the estimate \eqref{eqn-error-13} follows.
\end{proof}

As an application of Theorem \ref{thm-cea}, we consider the widely used $P1b/P1$ finite element pair (see \cite{DNAFB}).
\begin{align}
	\V^h&=\left\{\bv^h\in \V\cap\mathrm{C}_0(\overline{\mathcal{O}};\mathbb{R}^d):\bv^h\big|_T\in[P_1(T)]^d\oplus B(T)\ \text{ for all }\ T\in \mathcal{T}^h\right\}\label{eqn-p1b},\\
	Q^h&=\left\{p^h\in Q\cap\mathrm{C}_0(\overline{\mathcal{O}}):p^h\big|_T\in P_1(T) \ \text{ for all }\ T\in \mathcal{T}^h\right\},\label{eqn-p1}
\end{align}
where, $P_1(T)$ denotes the space of polynomials of degree less than or equal to $1$ on an element $T$, and $B(T)$ represents the space of bubble functions defined on $T$. With these finite element spaces, the discrete inf-sup condition \eqref{eqn-inf-sup-discrete} is satisfied (\cite{DNAFB}). Consequently, under appropriate regularity assumptions on the exact solution, we can derive an optimal-order error estimate for the $P1b/P1$ finite element solution using estimate \eqref{eqn-optimal-order} in conjunction with standard interpolation error bounds from finite element theory. We represent $\overline{\Gamma}_1$ as the union of a finite number of flat components:
$$\overline{\Gamma}_1=\bigcup_{i=1}^{i_0}\Gamma_{1,i},$$
where each $\Gamma_{1,i}$ is a line segment in two dimensions or a planar polygonal surface in three dimensions.

\begin{theorem}\label{thm-optimal}
	Let the assumptions of Theorem \ref{thm-unique} hold, and suppose that $(\bu, p)$ and $(\bu^h, p^h)$ are the exact and discrete solutions of Problems \ref{prob-hemi-1} and \ref{prob-hemi-3}, respectively, using the $P1b/P1$ finite element pair \eqref{eqn-p1b}-\eqref{eqn-p1}. Assume the following regularity conditions: $\bu \in \H^2(\mathcal{O})$, $\bu_{\tau} \in \H^2(\Gamma_{1,i})$ for $1 \leq i \leq i_0$, and $p \in \mathrm{H}^1(\mathcal{O})$. Then the following a priori error estimate holds:
	\begin{align}\label{eqn-optimal-order}
		\|\bu - \bu^h\|_{\V} +\|p - p^h\|_{Q} \leq C h.
	\end{align}
\end{theorem}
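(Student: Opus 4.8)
The plan is to feed the C\'ea-type estimate \eqref{eqn-cea} of Theorem \ref{thm-cea} into standard finite element interpolation theory, showing that each of the three infima on its right-hand side is of order $h$. Since the energy norm $\|\cdot\|_{\V}$ is equivalent to the $\mathbb{H}^1(\mathcal{O})$ norm via \eqref{eqn-equiv-1}, and since the assumed regularity is $\bu\in\H^2(\mathcal{O})$, $\bu_{\tau}\in\H^2(\Gamma_{1,i})$ and $p\in\mathrm{H}^1(\mathcal{O})$, the whole argument reduces to selecting good interpolants in $\V^h$ and $Q^h$ and invoking the approximation properties of the $P1b/P1$ pair \eqref{eqn-p1b}-\eqref{eqn-p1}.

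For the interior terms I would proceed as follows. Because $2>d/2$ for $d\in\{2,3\}$, we have $\H^2(\mathcal{O})\hookrightarrow\mathrm{C}^0(\overline{\mathcal{O}})$, so the nodal (Lagrange) interpolant $\Pi^h\bu$ is well defined; as $[P_1(T)]^d\subset[P_1(T)]^d\oplus B(T)$, it lies in $\V^h$ provided it respects the constraints defining $\V^h$. A key preliminary check is that this holds: since $\bu=\boldsymbol{0}$ on $\Gamma_0$ and $u_n=0$ on each flat component $\Gamma_{1,i}$ (where $\n$ is constant), the corresponding nodal values vanish and linearity on each boundary face propagates these conditions, so $\Pi^h\bu\in\V^h$. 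For the pressure, since $p$ is only in $\mathrm{H}^1(\mathcal{O})$, nodal interpolation is not available and I would use a Scott--Zhang (or Cl\'ement) quasi-interpolant $\pi^h p\in Q^h$. Choosing $\bv^h=\Pi^h\bu$ and $q^h=\pi^h p$ in \eqref{eqn-cea}, classical estimates give
\begin{align*}
\|\bu-\Pi^h\bu\|_{\V}\leq C\|\bu-\Pi^h\bu\|_{\mathbb{H}^1(\mathcal{O})}\leq Ch\|\bu\|_{\H^2(\mathcal{O})},\qquad \|p-\pi^h p\|_{Q}\leq Ch\|p\|_{\mathrm{H}^1(\mathcal{O})},
\end{align*}
so both the interior velocity term and the pressure term are $O(h)$.

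The delicate contribution is the boundary term $\|\bu_{\tau}-(\Pi^h\bu)_{\tau}\|_{\L^2(\Gamma_1)}^{1/2}$: the square-root exponent in \eqref{eqn-cea} forces me to establish the \emph{sharper} bound $\|\bu_{\tau}-(\Pi^h\bu)_{\tau}\|_{\L^2(\Gamma_1)}\leq Ch^2$ in order to recover the optimal $O(h)$ rate. This is exactly why the extra hypothesis $\bu_{\tau}\in\H^2(\Gamma_{1,i})$ is imposed, as the interior regularity $\bu\in\H^2(\mathcal{O})$ alone only yields $\bu_{\tau}\in\H^{3/2}(\Gamma)$, which is insufficient. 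Working on each flat piece $\Gamma_{1,i}$ and using that the trace of $\Pi^h\bu$ there coincides with the $(d-1)$-dimensional $P_1$ nodal interpolant of the trace of $\bu$—so that, $\n$ being constant on $\Gamma_{1,i}$, the tangential projection commutes with interpolation—I obtain from the one-lower-dimensional interpolation estimate
\begin{align*}
\|\bu_{\tau}-(\Pi^h\bu)_{\tau}\|_{\L^2(\Gamma_{1,i})}\leq Ch^2\|\bu_{\tau}\|_{\H^2(\Gamma_{1,i})}.
\end{align*}
Summing over $i=1,\dots,i_0$ gives $\|\bu_{\tau}-(\Pi^h\bu)_{\tau}\|_{\L^2(\Gamma_1)}\leq Ch^2$, whence its square root is $O(h)$.

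Assembling the three bounds in \eqref{eqn-cea} then yields $\|\bu-\bu^h\|_{\V}+\|p-p^h\|_{Q}\leq Ch$, which is precisely \eqref{eqn-optimal-order}. I expect the main obstacle to be the boundary term: one must verify carefully that the tangential trace of the nodal velocity interpolant reduces to a genuine lower-dimensional nodal interpolant of $\bu_{\tau}$ on each flat component (so that the $\H^2$ boundary regularity can actually be exploited at the $h^2$ rate rather than the naive $h$), and that the decomposition $\overline{\Gamma}_1=\bigcup_{i=1}^{i_0}\Gamma_{1,i}$ is compatible with the family $\{\mathcal{T}^h\}$ so that the piecewise estimates assemble without loss across the interfaces between the flat pieces.
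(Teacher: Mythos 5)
Your proposal is correct and follows essentially the same route as the paper: choose the nodal interpolant $\Pi^h\bu$ and a projection of $p$ into $Q^h$, insert the standard estimates $\|\bu-\Pi^h\bu\|_{\V}\leq Ch\|\bu\|_{\H^2(\mathcal{O})}$, $\|p-q^h\|_{Q}\leq Ch\|p\|_{\mathrm{H}^1(\mathcal{O})}$ and the boundary estimate $\|\bu_{\tau}-(\Pi^h\bu)_{\tau}\|_{\L^2(\Gamma_{1,i})}\leq Ch^2\|\bu_{\tau}\|_{\H^2(\Gamma_{1,i})}$ into C\'ea's inequality \eqref{eqn-cea}, the $h^2$ rate on the boundary being exactly what compensates the square root. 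The paper uses the $L^2$-projection for the pressure where you use Scott--Zhang, and it omits the conformity and trace-commutation checks you spell out, but these are immaterial differences.
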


\begin{proof}
	Let $\bv^h = \Pi^h \bu$ denote the standard finite element interpolant of $\bu$ in the velocity space, and let $q^h = P^h p$ be the $L^2$-projection of $p$ onto the pressure space. 
	Using standard interpolation estimates for conforming finite elements, we have:
	\begin{align*}
		\|\bu-\Pi^h\bu\|_{\V}&\leq Ch\|\bu\|_{\H^2(\mathcal{O})},\\
		\|p-P^hp\|_{Q}&\leq Ch\|p\|_{\mathrm{H}^1(\mathcal{O})},\\
		\|\bu_{\tau}-(\Pi^h\bu)_{\tau}\|_{\L^2(\Gamma_{1,i})}&\leq Ch^2\|\bu_{\tau}\|_{\H^2(\Gamma_{1,i})},\ 1\leq i\leq i_0. 
	\end{align*}
	Substituting these bounds into  C\'ea's inequality \eqref{eqn-cea} yields the desired estimate \eqref{eqn-optimal-order}.
\end{proof}

	\section{Solution Algorithm and Numerical Examples}\label{sec5}\setcounter{equation}{0}
	In this section, we present an iterative algorithm for solving Problem \ref{prob-hemi-1} and provide numerical simulation results to illustrate its performance.
	\subsection{Solution algorithm}\label{sub-sol-alg} As discussed in \cite[Section 5.1]{WHHQLM}, we employ a Newton-type method to address the nonlinear terms $\mathfrak{c}(\cdot, \cdot)$ and $\mathfrak{c}_0(\cdot,\cdot)$ arising in Problem \ref{eqn-discrete-hemi}. This approach is inspired by a linearization strategy. Specifically, we represent the next iterate $\bu^h_{n+1}$ as
	$
	\bu^h_{n+1} = \bu^h_n + \boldsymbol{\delta}^h_n,
	$
		where the correction term $\boldsymbol{\delta}^h_n = \bu^h_{n+1} - \bu^h_n$ is assumed to be small.
		To proceed with the linearization, let us consider the following function of a real variable with real values:
		\begin{align*}
			g(\theta)&=|\bu^h_n+\theta\boldsymbol{\delta}^h_n|^{r-1}(\bu^h_n+\theta\boldsymbol{\delta}^h_n),\ \theta\in\R,
			\end{align*}
			and use the approximation 
			\begin{align*}
				g(1)\approx g(0)+g^{\prime}(0),
			\end{align*}
			where $	g(0)=|\bu^h_n|^{r-1}\bu^h_n$  and 
			\begin{align*}
			g^{\prime}(0)=\left\{\begin{array}{cl}\boldsymbol{\delta}^h_n,&\text{ for }r=1,\\ \left\{\begin{array}{cc}|\bu^h_n|^{r-1}\boldsymbol{\delta}^h_n+(r-1)\frac{\bu^h_n}{|\bu^h_n|^{3-r}}(\bu^h_n\cdot\boldsymbol{\delta}^h_n),&\text{ if }\bu^h_n\neq \boldsymbol{0},\\\boldsymbol{0},&\text{ if }\bu^h_n=\boldsymbol{0},\end{array}\right.&\text{ for } 1<r<3,\\ |\bu^h_n|^{r-1}\boldsymbol{\delta}^h_n+(r-1)\bu^h_n|\bu^h_n|^{r-3}(\bu^h_n\cdot\boldsymbol{\delta}^h_n)& \text{ for } 3\leq r<\infty.\end{array}\right.
			\end{align*}
			Similarly, we consider
			\begin{align*}
				h(\theta)=\B(\bu^h_n+\theta\boldsymbol{\delta}^h_n,\bu^h_n+\theta\boldsymbol{\delta}^h_n),\ \theta\in\R,
			\end{align*}
			and use the approximation 
			\begin{align*}
				h(1)\approx h(0)+h^{\prime}(0),
			\end{align*}
			where $	h(0)=\B(\bu^h_n,\bu^h_n)$  and 
			\begin{align*}
				h^{\prime}(0)=\B(\bu^h_n,\boldsymbol{\delta}^h_n)+\B(\boldsymbol{\delta}^h_n,\bu^h_n). 
			\end{align*}
	
Accordingly, using the notation introduced in \eqref{eqn-c-c0}, we now present the following iterative scheme for the case $3 \leq r < \infty$:
		\begin{algorithm}\label{alg-1}
	Start by selecting an initial approximation  $(\bu^h_0,p^h) \in \V^h\times Q^h$. Then, for each $n \geq 0$, find $(\bu^h_{n+1}, p^h_{n+1}) \in \V^h \times Q^h$ such that:
			\begin{equation}\label{eqn-algo}
			\left\{
			\begin{aligned}
				&\mu \mathfrak{a}(\bu^h_{n+1},\bv^h)+\mathfrak{b}(\bu^h_{n+1},\bu^h_n,\bv^h)+\mathfrak{b}(\bu^h_{n},\bu^h_{n+1},\bv^h)+\alpha \mathfrak{a}_0(\bu^h_{n+1},\bv^h)+\beta \mathfrak{c}(\bu^h_n,\bu^h_{n+1},\bv^h)\\&\quad+\beta(r-1)(\bu^h_n|\bu^h_n|^{r-3}(\bu^h_n\cdot\bu^h_{n+1}),\bv^h)+\kappa \mathfrak{c}_0(\bu^h_n,\bu^h_{n+1},\bv^h)\\&\quad+\kappa(q-1)(\bu^h_n|\bu^h_n|^{q-3}(\bu^h_n\cdot\bu^h_{n+1}),\bv^h)+\mathfrak{d}(\bv^h,p_{n+1}^h)+\int_{\Gamma_1}j^0(\bu_{\tau,n+1}^h;\bv_{\tau}^h)\d S \\ &\geq 	(\f,\bv^h)_{\L^2(\mathcal{O})}+\mathfrak{b}(\bu_n^h,\bu_n^h,\bv^h)+\beta(r-1) \mathfrak{c}(\bu^h_n,\bu^h_n,\bv^h)+\kappa (q-1)\mathfrak{c}_0(\bu^h_n,\bu^h_n,\bv^h) \\ &\hskip10cm \text{ for all }\ \bv^h\in \V^h,\\
				&\mathfrak{d}(\bu^h_{n+1},q^h)=0\ \text{ for all }\ q^h\in Q^h. 
			\end{aligned}\right.
		\end{equation}
	\end{algorithm}
	An analogous iterative scheme can be developed for the case $r \in [1,3)$.

	\subsection{Numerical examples} We present numerical results for three examples to demonstrate the performance of Algorithm \ref{alg-1}.  In these examples, we set the following: 
\begin{align*}
	j(\z)=\int_0^{|\z|}\omega(t)\d t,
\end{align*}
where $\omega:[0,\infty)\to\R$  is continuous, $\omega(0)>0$. In the examples given below, we have taken $\omega(t)=(a-b)e^{-\rho t}+b$, where $a>b>0$ and $\rho>0$ are constants, specified in Table \ref{tab:gen data} below. The nonlinear slip boundary condition $-\boldsymbol{\sigma}_{\tau}\in \partial j(\bu_{\tau})$  is equivalent to
\begin{align*}
	|\boldsymbol{\sigma}_{\tau}|\leq \omega(0)\ \text{ if }\ \bu_{\tau}=\boldsymbol{0},\quad -\boldsymbol{\sigma}_{\tau}=\omega(|\bu_{\tau}|)\frac{\bu_{\tau}}{|\bu_{\tau}|} \ \text{ if }\ \bu_{\tau}\neq\boldsymbol{0}. 
\end{align*}
We define a Lagrange multiplier $$\boldsymbol{\lambda}= \frac{-\boldsymbol{\sigma}_{\tau}}{\omega(|\bu_{\tau}|)} $$ and introduce a set
\begin{align*}
	\Lambda=\left\{\boldsymbol{\lambda}\in\L^2(\Gamma_1)\big| |\boldsymbol{\lambda}|\leq 1\ \text{ a.e. on }\ \Gamma_1 \right\}. 
\end{align*}
Therefore, the weak formulation of the problem \eqref{eqn-CBF}-\eqref{eqn-boundary-2} can then be stated as follows:
	\begin{problem}\label{prob-hemi-num}
	Find $\bu\in\V$, $p\in Q$ and $\boldsymbol{\lambda}\in\Lambda$  such that
	\begin{equation}\label{eqn-hemi-num}
		\left\{
		\begin{aligned}
			\mu \mathfrak{a}(\bu,\bv)+\mathfrak{b}(\bu,\bu,\bv)+\alpha \mathfrak{a}_0(\bu,\bv)&+\beta \mathfrak{c}(\bu,\bv)+\kappa \mathfrak{c}_0(\bu,\bv)+\mathfrak{d}(\bv,p)\\+\int_{\Gamma_1}\omega(|\bu_{\tau}|)\boldsymbol{\lambda}\cdot\bv_{\tau}\d S&=	\langle\f,\bv\rangle  \ \text{ for all }\ \bv\in \V,\\
			\mathfrak{d}(\bu,q)&=0\ \text{ for all }\ q\in Q,\\
			\boldsymbol{\lambda}\cdot\bu_{\tau}&=|\bu_{\tau}|\ \text{ a.e. on }\ \Gamma_1. 
		\end{aligned}\right.
	\end{equation}
\end{problem}
For the case, $r\in[3,\infty)$, the iteration step in Algorithm \ref{alg-1} is then reformulated as follows: find $(\bu^h_{n+1},p^h_{n+1},\boldsymbol{\lambda}^h_{n+2})\in\V^h\times Q^h\times\Lambda$  such that
	\begin{equation}\label{eqn-algo-1}
	\left\{
	\begin{aligned}
		&\mu \mathfrak{a}(\bu^h_{n+1},\bv^h)+\mathfrak{b}(\bu^h_{n+1},\bu^h_n,\bv^h)+\mathfrak{b}(\bu^h_{n},\bu^h_{n+1},\bv^h)+\alpha \mathfrak{a}_0(\bu^h_{n+1},\bv^h)+\beta \mathfrak{c}(\bu^h_n,\bu^h_{n+1},\bv^h)\\&\quad+\beta(r-1)(\bu^h_n|\bu^h_n|^{r-3}(\bu^h_n\cdot\bu^h_{n+1}),\bv^h)+\kappa \mathfrak{c}_0(\bu^h_n,\bu^h_{n+1},\bv^h)\\&\quad+\kappa(q-1)(\bu^h_n|\bu^h_n|^{q-3}(\bu^h_n\cdot\bu^h_{n+1}),\bv^h)+\mathfrak{d}(\bv^h,p_{n+1}^h)+\int_{\Gamma_1}\omega(|\bu^h_{\tau,n}|)\boldsymbol{\lambda}^h_{n+1}\cdot\bv^h_{\tau}\d S \\ &\geq 	(\f,\bv^h)_{\L^2(\mathcal{O})}+\mathfrak{b}(\bu_n^h,\bu_n^h,\bv^h)+\beta(r-1) \mathfrak{c}(\bu^h_n,\bu^h_n,\bv^h)+\kappa (q-1)\mathfrak{c}_0(\bu^h_n,\bu^h_n,\bv^h) \\ &\hskip10cm \text{ for all }\ \bv^h\in \V^h,\\
		&\mathfrak{d}(\bu^h_{n+1},q^h)=0\ \text{ for all }\ q^h\in Q^h,\\
		&\boldsymbol{\lambda}^h_{n+2}\cdot\bu^h_{\tau,n+1}=|\bu^h_{\tau,n+1}|\ \text{ a.e. on }\ \Gamma_1. 
	\end{aligned}\right.
\end{equation}
For the implementation, we employ the following Uzawa iteration scheme \cite[Section 5]{YLKL}: choose an initial guess $(\bu^h_0,p^h_0) \in \V_h\times Q_h$, and then, for each $n \geq 0$, find $(\bu^h_{n+1},p^h_{n+1},\boldsymbol{\lambda}^h_{n+2})\in\V^h\times Q^h\times\Lambda$  such that
	\begin{equation}\label{eqn-algo-2}
	\left\{
	\begin{aligned}
		&\mu \mathfrak{a}(\bu^h_{n+1},\bv^h)+\mathfrak{b}(\bu^h_{n+1},\bu^h_n,\bv^h)+\mathfrak{b}(\bu^h_{n},\bu^h_{n+1},\bv^h)+\alpha \mathfrak{a}_0(\bu^h_{n+1},\bv^h)+\beta \mathfrak{c}(\bu^h_n,\bu^h_{n+1},\bv^h)\\&\quad+\beta(r-1)(\bu^h_n|\bu^h_n|^{r-3}(\bu^h_n\cdot\bu^h_{n+1}),\bv^h)+\kappa \mathfrak{c}_0(\bu^h_n,\bu^h_{n+1},\bv^h)\\&\quad+\kappa(q-1)(\bu^h_n|\bu^h_n|^{q-3}(\bu^h_n\cdot\bu^h_{n+1}),\bv^h)+\mathfrak{d}(\bv^h,p_{n+1}^h) \\ &\geq 	(\f,\bv^h)_{\L^2(\mathcal{O})}+\mathfrak{b}(\bu_n^h,\bu_n^h,\bv^h)+\beta(r-1) \mathfrak{c}(\bu^h_n,\bu^h_n,\bv^h)+\kappa (q-1)\mathfrak{c}_0(\bu^h_n,\bu^h_n,\bv^h), \\ &\quad -\int_{\Gamma_1}\omega(|\bu^h_{\tau,n}|)\boldsymbol{\lambda}^h_{n+1}\cdot\bv^h_{\tau}\d S\  \text{ for all }\ \bv^h\in \V^h,\\
		&\mathfrak{d}(\bu^h_{n+1},q^h)=0\ \text{ for all }\ q^h\in Q^h,\\
		&\boldsymbol{\lambda}^h_{n+2}=P_{\Lambda}(\boldsymbol{\lambda}^h_{n+1}+\eta \bu^h_{\tau,n+1})\ \text{ on }\ \Gamma_1, 
	\end{aligned}\right.
\end{equation}
where  $\eta>0$, and $P_{\Lambda}$ is the projection of $\L^2(\Gamma_1)$ onto $\Lambda$. 
This problem is solved as follows: choose an initial guess $\boldsymbol{\lambda}^h_{n+1,0}$, then for $\ell = 0, 1, \ldots$,
\begin{enumerate}
	\item [1.] solve \eqref{eqn-algo-2} for $(\bu^h_{n+1,\ell}, p^h_{n+1,\ell})$;
	\item [2.] $\boldsymbol{\lambda}^h_{n+1,\ell+1}=P_{\Lambda}(\boldsymbol{\lambda}^h_{n+1,\ell}+\eta \bu^h_{\tau,n+1,\ell})$;
	\item [3.]
	iterate Steps 1 and 2 until the stopping criterion
	$
	\|\bu^h_{n+1,\ell} - \bu^h_{n+1,\ell-1}\|_{\H} \leq \varepsilon_2
	$
	is satisfied. At this point, define
		$
	(\bu^h_{n+1}, p^h_{n+1}, \boldsymbol{\lambda}^h_{n+2})
	$
		as the most recent iterates.
\end{enumerate}
Repeat the above procedure until $\|\bu^h_{n+1}-\bu^h_{n}\|_{\H}\leq\eps_1$.

We consider three examples for implementation numerically and to support our theoretical results. In the forthcoming numerical experiments, we consider the computational domain as the unit square $ \mathcal{O} = (0, 1) \times (0, 1) $ with slip boundary condition enforced along the top boundary $ \Gamma_1 = (0,1) \times \{1\} $, while homogeneous Dirichlet boundary conditions are imposed on the remaining portion of the boundary. We also use the nonlinear slip coefficient of the form
$$\omega(t)=(a-b)e^{-\rho t}+b,$$
where $ a > b>0 $ are given as in the Table \ref{tab:gen data}. The numerical simulations are carried out using the finite element library FEniCS, implemented in Python. The iterative procedure employs the following setup: for the outer iteration, we initialize with $ \bu_h^0 = \boldsymbol{0} $ and apply a stopping criterion of either a relative tolerance $\varepsilon_1 = 10^{-8} $ or a maximum of fifty iterations. 

The Uzawa algorithm is initialized with $ \boldsymbol{\lambda}^h_{0,0} = \boldsymbol{0} $ for the first iteration, while for subsequent outer iterations, the solution from the previous step $ \boldsymbol{\lambda}^h_{n-1} $ is used as the initial guess, that is, $ \boldsymbol{\lambda}^h_{n,0} = \boldsymbol{\lambda}^h_{n-1} $. We set the Uzawa step parameter $ \eta = 1 $ for first two examples and $\eta=0.8$ for the third example, and use a tolerance of $ \varepsilon_2 = 10^{-8} $ or terminate after twenty iterations, whichever occurs first.

The parameters for the those three examples are listed below:
\begin{table}[h!]
	\centering
	\begin{tabular}{@{}llcccccccc@{}}
		\toprule
		\textbf{Example} & $\mu$ & $\alpha$ & $\beta$ & $\kappa$ & $(r, q)$ & $a$ & $b$ & $\rho$ & $\eta$ \\
		\midrule
		1  & 1.2 & 2 & 1.5 & 0 & $(3,-)$ & 1.55 & 1.53 & 8.0 & 1 \\
		2 & 0.8 & 1.5 & 2 & -1.2 & $(3, 2)$ & 5.01 & 5.00 & 8.0 & 1\\
		3  & 1.0 & 0.5 & 1.2 & -1.0 & $(4, 3)$ & 3.25 & 3.20 & 6.0 & 0.8\\
		\bottomrule
	\end{tabular}
	\caption{Setup for numerical experiments} \label{tab:gen data}
\end{table}

\begin{remark} Note that
	\begin{itemize}
		\item Example 1 corresponds to the classical convective Brinkman–Forchheimer model (no pumping term) as we choose $\kappa=0$;
		\item Example 2 includes both Forchheimer and extended Darcy terms;
		\item the forcing function $\f$ is computed using the exact solution is substituted into \eqref{eqn-CBF} for chosen $\bu^0$ and $p^0$ (see inside the examples below).
	\end{itemize}
\end{remark}

\noindent \textbf{Example 1.} As mentioned in the remark above, this example is without pumping term, that is, for $\kappa=0.$ In this example, we consider the force function $\f$ to be calculated as 
$$ \f = -\mu\Delta\bu^0 + (\bu^0 \cdot \nabla)\bu^0 + \alpha\bu^0 + \beta|\bu^0|^{r-1}\bu^0 + \nabla p^0,$$
where $\bu^0=(u_1^0, u_2^0)$ and $p^0$ are
\begin{align*}
	u_1^0(x, y) &= -x^2 (x - 1) \, y \, (3y - 2), \\
	u_2^0(x, y) &= x (3x - 2) \, y^2 (y - 1), \\
	p^0(x, y)   &= (2x - 1)(2y - 1).
\end{align*}
The other parameters are as tabulated in Table \ref{tab:gen data}. The computed velocity field and pressure are plotted in Figure \ref{fig:ex1-velocity_pressure} while the tangential components of velocity $\bu_\tau$ and stress tensor $\sigma_\tau$ along slip boundary $y=1$ are plotted in Figure \ref{fig:ex12-Slip}. The computed errors in vector field and pressure term along with rate of convergence are tabulated in Table \ref{tab:ex1-roc}, where to compute the errors, the reference solutions are considered as $\bu^*$ and $p^*,$ obtained on a mesh with grid size $350 \times 350$. In Figure \ref{fig:ex1-error-plot}, we plot the errors $\boldsymbol{u}_h - \boldsymbol{u}^*$, $|\boldsymbol{u}_h - \boldsymbol{u}^*|$, and $|p_h - p^*|$ in mesh with grid sizes $10\times 10,$ $30\times 30,$ and $50\times 50.$
\begin{figure}[h!]
	\centering
	\begin{subcaptionbox}{Velocity field\label{fig:ex1-velocity}}[0.48\textwidth]
		{\includegraphics[width=\linewidth]{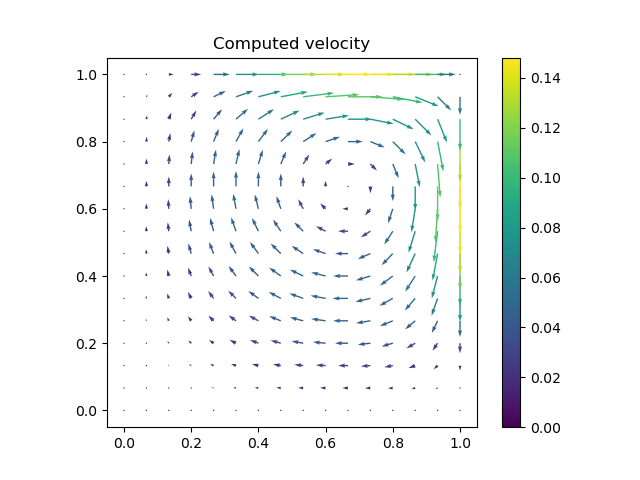}}
	\end{subcaptionbox}
	\hfill
	\begin{subcaptionbox}{Pressure \label{fig:ex-1pressure}}[0.48\textwidth]
		{\includegraphics[width=\linewidth]{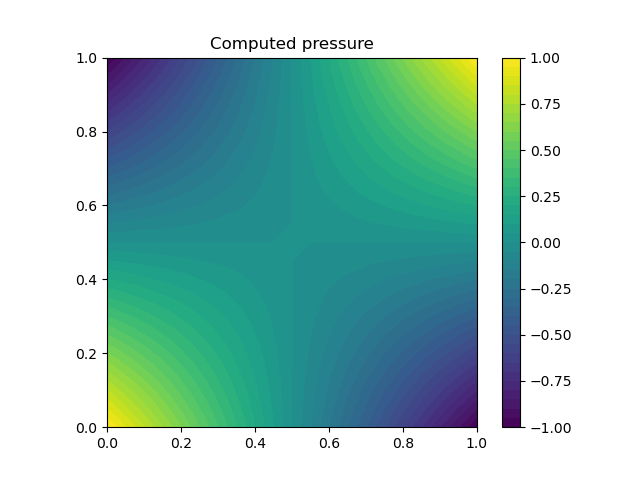}}
	\end{subcaptionbox}
	\caption{Velocity and pressure plots for the computed solution.}
	\label{fig:ex1-velocity_pressure}
\end{figure}

\begin{figure}[h!]
	\centering
	\begin{subcaptionbox}{Velocity field\label{fig:ex1-u slip}}[0.48\textwidth]
		{\includegraphics[width=\linewidth]{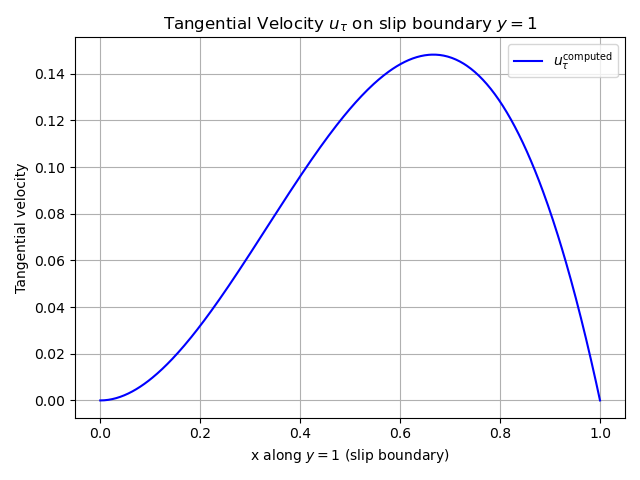}}
	\end{subcaptionbox}
	\hfill
	\begin{subcaptionbox}{Pressure Field\label{fig:ex1-sigma_slip}}[0.48\textwidth]
		{\includegraphics[width=\linewidth]{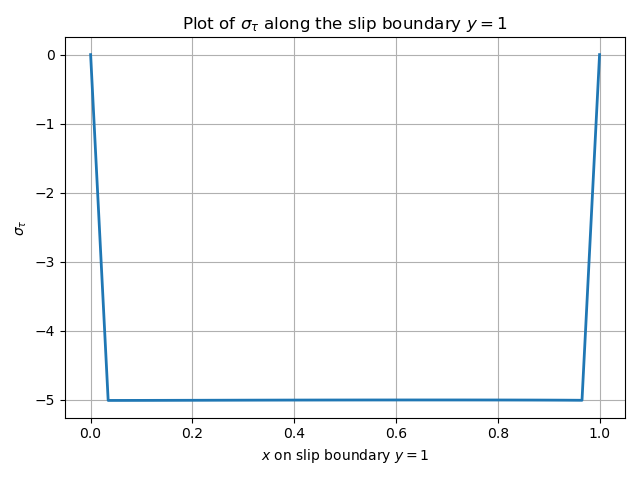}}
	\end{subcaptionbox}
	\caption{Velocity and pressure plots for the computed solution.}
	\label{fig:ex12-Slip}
\end{figure}

\begin{table}[h!]
	\centering
	\begin{tabular}{c c c c c c c}
		\toprule
		Grid size & $\|\bu^h - \bu^*\|_{\L^2}$ & Order($h$) & $\|\bu^h - \bu^*\|_{\V}$ & Order($h$) & $\|p^h - p^*\|_{L^2}$ & Order($h$) \\
		\midrule
		$5 \times 5$   & 3.778e-03 & -- & 1.414e-01 & -- & 1.361e-02 & -- \\
		$10 \times 10$   & 9.856e-04 & 1.94  & 7.376e-02 & 0.94  & 3.417e-03 & 1.99 \\
		$15 \times 15$  & 4.409e-04 & 1.98 & 4.959e-02 & 0.98  & 1.520e-03 & 2.00 \\
		$20 \times 20$  & 2.482e-04 & 2.00 & 3.728e-02 & 0.99 & 8.553e-04 & 2.00 \\
		$25 \times 25$  & 1.589e-04 & 2.00 & 2.973e-02 & 1.01 & 5.475e-04 & 2.00 \\
		$30 \times 30$ & 1.100e-04 & 2.02 & 2.490e-02 & 0.97 & 3.802e-04 & 2.00 \\
		$35 \times 35$ & 8.076e-05 & 2.00 & 2.115e-02 & 1.06 & 2.794e-04 & 2.00 \\
		$40 \times 40$ & 6.148e-05 & 2.04 & 1.869e-02 & 0.93 & 2.139e-04 & 2.00 \\
		$45 \times 45$ & 4.837e-05 & 2.04 & 1.662e-02 & 0.99 & 1.690e-04 & 2.00 \\
		$50 \times 50$ & 3.918e-05 & 2.00 & 1.466e-02 & 1.19 & 1.369e-04 & 2.00 \\
		\bottomrule
	\end{tabular} 
	\caption{Numerical convergence orders}\label{tab:ex1-roc}
\end{table}

\begin{figure}[h!]
	\centering
	\includegraphics[width=\textwidth]{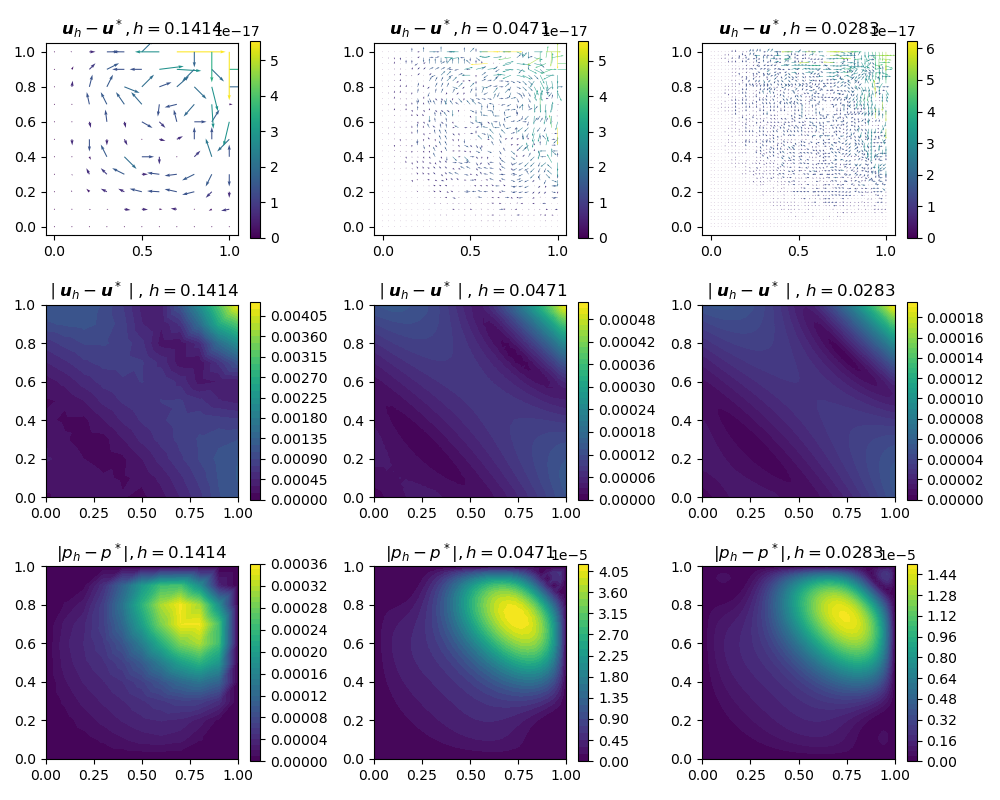}  
	\caption{Errors $\boldsymbol{u}_h - \boldsymbol{u}^*$, $|\boldsymbol{u}_h - \boldsymbol{u}^*|$, and $|p_h - p^*|$ for mesh with grid sizes $10\times 10,$ $30\times 30,$ and $50\times 50,$}
	\label{fig:ex1-error-plot}
\end{figure}

\noindent \textbf{Example 2.} For this example, the given source function $\f$ is calculated from 
\begin{align*}
	\f:=-\mu\Delta\bu^0 + (\bu^0 \cdot \nabla)\bu^0 + \alpha\bu^0 + \beta|\bu^0|^{r-1}\bu^0 + \kappa|\bu^0|^{q-1}\bu^0 + \nabla p^0,
\end{align*}
where $\bu^0=(u_1^0, u_2^0)$ and $p^0$ are chosen as
\begin{align*}
	u_1^0(x, y) &= -\cos(2\pi x)\sin(2\pi y) + \sin(2\pi y), \\
	u_2^0(x, y) &= \sin(2\pi x)\cos(2\pi y) - \sin(2\pi x), \\
	p^0(x, y)   &= 2\pi\left(\cos(2\pi y) - \cos(2\pi x)\right),
\end{align*}
and with the parameters as mentioned in Table \ref{tab:gen data}. 
The outcomes for this example are presented in Figures \ref{fig: ex2-velocity_pressure} - \ref{fig:ex2-Slip} and summarized in Table \ref{tab:ex2-roc}. The errors reported in Table \ref{tab:ex2-roc} are computed using the reference solution $(\bu^*, p^*)$, obtained on a mesh with grid size $350 \times 350$.

\begin{figure}[h!]
	\centering
	\begin{subcaptionbox}{Velocity field\label{fig:velocity}}[0.48\textwidth]
		{\includegraphics[width=\linewidth]{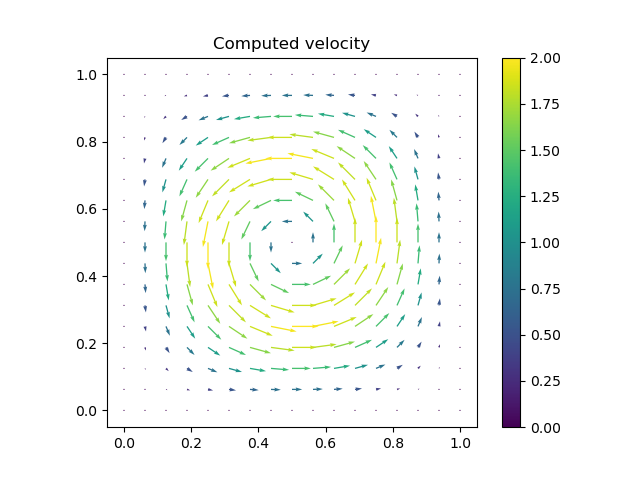}}
	\end{subcaptionbox}
	\hfill
	\begin{subcaptionbox}{Pressure Field\label{fig:pressure}}[0.48\textwidth]
		{\includegraphics[width=\linewidth]{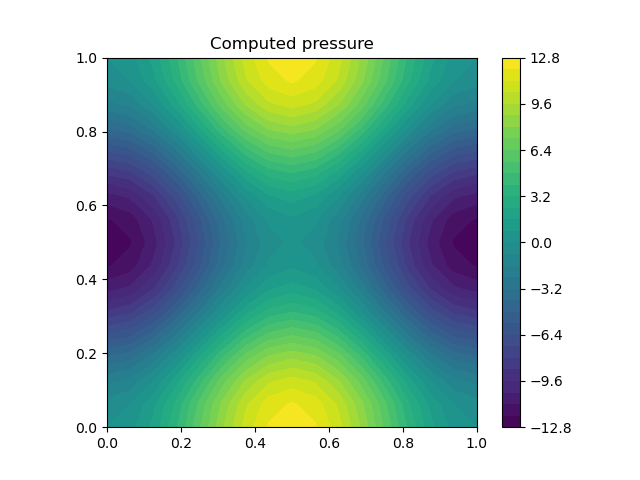}}
	\end{subcaptionbox}
	\caption{Velocity and pressure plots for the computed solution.}
	\label{fig: ex2-velocity_pressure}
\end{figure}

\begin{figure}[h!]
	\centering
	\begin{subcaptionbox}{Velocity field\label{fig:u slip}}[0.48\textwidth]
		{\includegraphics[width=\linewidth]{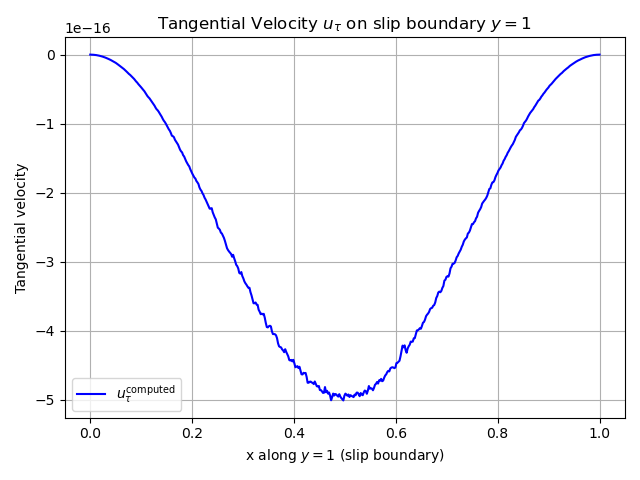}}
	\end{subcaptionbox}
	\hfill
	\begin{subcaptionbox}{Pressure Field\label{fig:sigma_slip}}[0.48\textwidth]
		{\includegraphics[width=\linewidth]{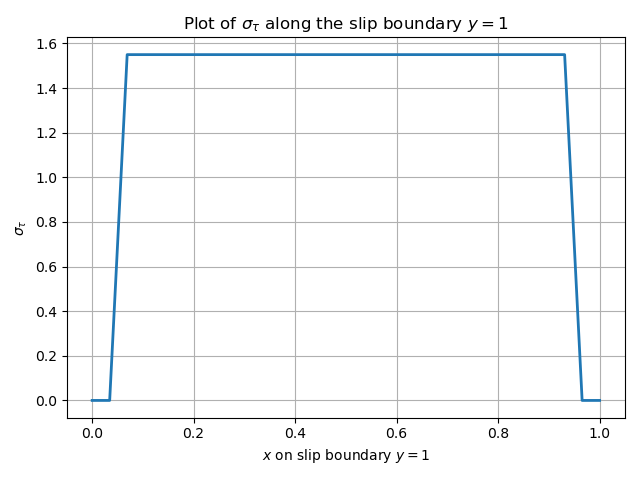}}
	\end{subcaptionbox}
	\caption{Velocity and pressure plots for the computed solution.}
	\label{fig:ex2-Slip}
\end{figure}

\begin{table}[h!]
	\centering
	\begin{tabular}{c c c c c c c}
		\toprule
		Grid size & $\|\bu^h - \bu^*\|_{\L^2}$ & Order($h$) & $\|\bu^h - \bu^*\|_{\V}$ & Order($h$) & $\|p^h - p^*\|_{L^2}$ & Order($h$) \\
		\midrule
		$5 \times 5$   & 1.209e-01 & -- & 4.524e+00 & -- & 2.796e-01 & -- \\
		$10 \times 10$   & 3.357e-02 & 1.85 & 2.512e+00 & 0.85 & 9.297e-02 & 1.59 \\
		$15 \times 15$  & 1.521e-02 & 1.95 & 1.711e+00 & 0.95 & 4.426e-02 & 1.83 \\
		$20 \times 20$  & 8.604e-03 & 1.98 & 1.292e+00 & 0.98 & 2.553e-02 & 1.91 \\
		$25 \times 25$  & 5.520e-03 & 1.99 & 1.033e+00 & 1.01 & 2.553e-02 & 1.96 \\
		$30 \times 30$ & 3.824e-03 & 2.01 & 8.662e-01 & 0.96 & 1.154e-02 & 1.96 \\
		$35 \times 35$ & 2.811e-03 & 2.00 & 7.363e-01 & 1.05 & 8.472e-03 & 2.01 \\
		$40 \times 40$ & 2.139e-03 & 2.04 & 6.509e-01 & 0.92 & 6.525e-03 & 1.96 \\
		$45 \times 45$ & 1.684e-03 & 2.03 & 5.791e-01 & 0.99 & 5.160e-03 & 1.99 \\
		$50 \times 50$ & 1.365e-03 & 1.99 & 5.108e-01 & 1.19 & 4.139e-03 & 2.09 \\
		\bottomrule
	\end{tabular} 
	\caption{Numerical convergence orders} \label{tab:ex2-roc}
\end{table}

\begin{figure}[h!]
	\centering
	\includegraphics[width=\textwidth]{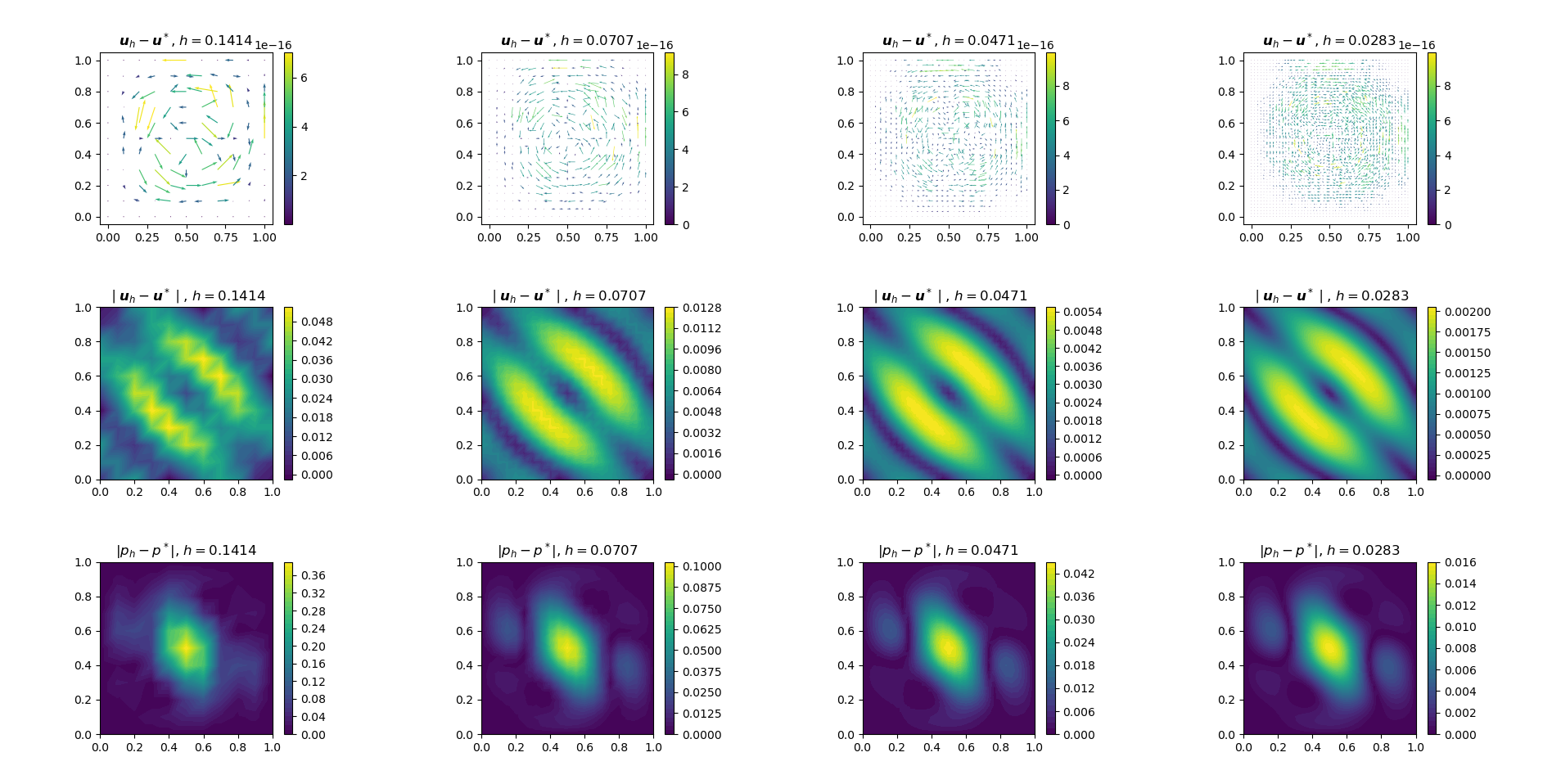}  
	\caption{Errors $\boldsymbol{u}_h - \boldsymbol{u}^*$, $|\boldsymbol{u}_h - \boldsymbol{u}^*|$, and $|p_h - p^*|$ for mesh with grid sizes $10\times 10,$ $20\times 20,$ $30\times 30,$ and $50\times 50,$}
	\label{fig:ex2-error-plot}
\end{figure}

\noindent \textbf{Example 3.} For this example, we take the force function 
$\f$ to be computed using 
\begin{align*}
	\f:=-\mu\Delta\bu^0 + (\bu^0 \cdot \nabla)\bu^0 + \alpha\bu^0 + \beta|\bu^0|^{r-1}\bu^0 + \kappa|\bu^0|^{q-1}\bu^0 + \nabla p^0,
\end{align*}
where $\bu^0=(u_1^0, u_2^0)$ and $p^0$ are chosen as
\begin{align*}
	u_1^0(x, y) &= -x^2 (1 - x) \, y \, (3y - 2), \\
	u_2^0(x, y) &= x (3x - 2) \, y^2 (1-y), \\
	p^0(x, y)   &= (2x - 1)(2y - 1).
\end{align*}
The results for this example are shown in Figures \ref{fig:ex3-velocity_pressure}–\ref{fig:ex3-Slip} and summarized in Table \ref{tab:ex3-roc}. The errors in Table \ref{tab:ex3-roc} are computed with respect to the reference solution $(\bu^*, p^*)$, which is obtained on a mesh with grid size $350 \times 350$.

\begin{figure}[h!]
	\centering
	\begin{subcaptionbox}{Velocity field\label{fig:3velocity}}[0.48\textwidth]
		{\includegraphics[width=\linewidth]{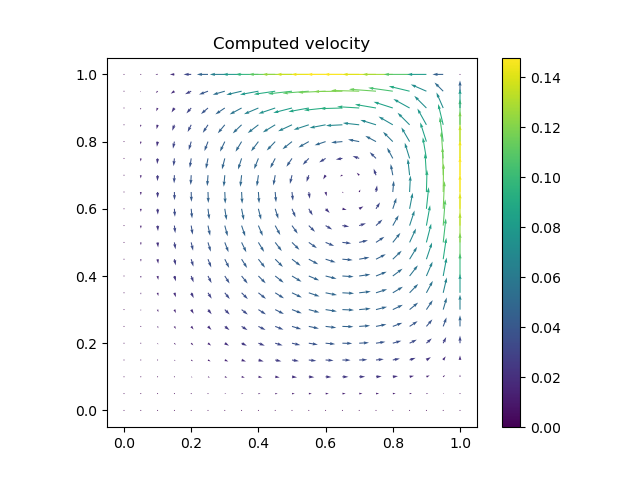}}
	\end{subcaptionbox}
	\hfill
	\begin{subcaptionbox}{Pressure Field\label{fig:3pressure}}[0.48\textwidth]
		{\includegraphics[width=\linewidth]{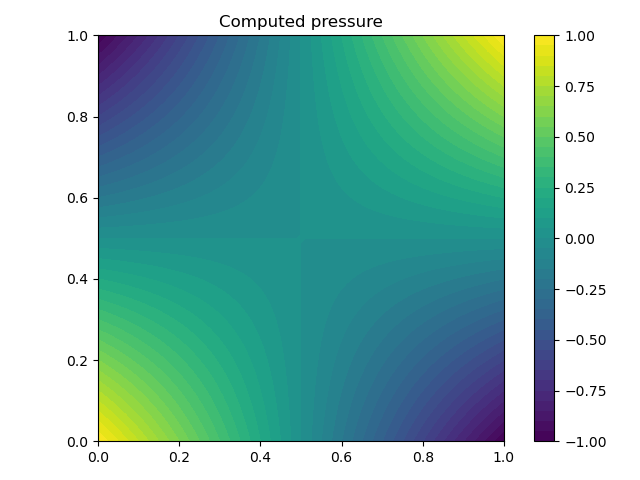}}
	\end{subcaptionbox}
	\caption{Velocity and pressure plots for the computed solution.}
	\label{fig:ex3-velocity_pressure}
\end{figure}

\begin{figure}[h!]
	\centering
	\begin{subcaptionbox}{Velocity field\label{fig:3u slip}}[0.48\textwidth]
		{\includegraphics[width=\linewidth]{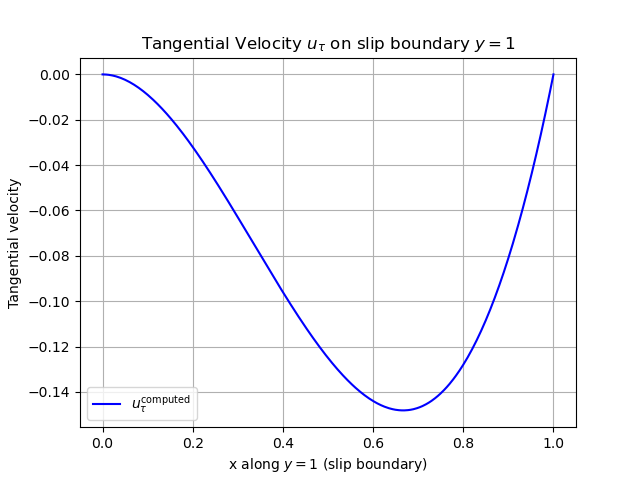}}
	\end{subcaptionbox}
	\hfill
	\begin{subcaptionbox}{Pressure Field\label{fig:3sigma_slip}}[0.48\textwidth]
		{\includegraphics[width=\linewidth]{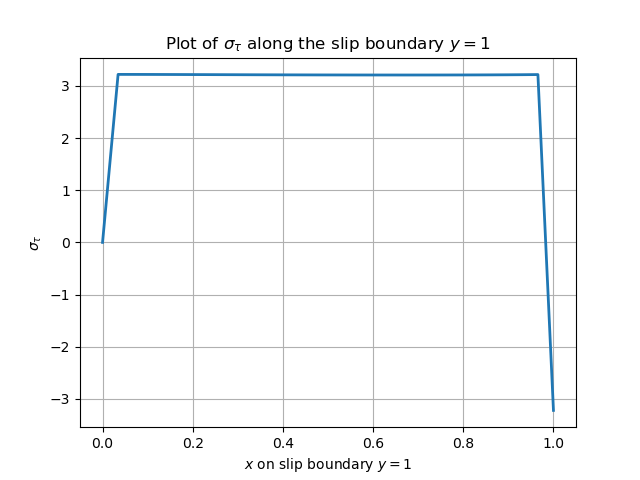}}
	\end{subcaptionbox}
	\caption{Velocity and pressure plots for the computed solution.}
	\label{fig:ex3-Slip}
\end{figure}

\begin{table}[h!]
	\centering
	\begin{tabular}{c c c c c c c}
		\toprule
		grid size & $\|\bu^h - \bu^*\|_{\L^2}$ & Order($h$) & $\|\bu^h - \bu^*\|_{\V}$ & Order($h$) & $\|p^h - p^*\|_{L^2}$ & Order($h$) \\
		\midrule
		$8\times 8$   & 1.529e-03 & -- & 9.159e-02 & -- & 5.335e-03 & -- \\
		$16\times 16$   & 3.877e-04 & 1.98 & 4.653e-02 & 0.98 & 1.336e-03 & 2.00 \\
		$24\times 24$  & 1.723e-04 & 2.00 & 3.112e-02 & 0.99 & 5.940e-04 & 2.00 \\
		$32\times 32$  & 9.658e-05 & 2.01 & 2.334e-02 & 1.00 & 3.342e-04 & 2.00 \\
		$40\times 40$  & 3.342e-04 & 2.02 & 1.869e-02 & 1.00 & 2.139e-04 & 2.00 \\
		$48\times 48$ & 4.239e-05 & 2.04 & 1.559e-02 & 1.00 & 1.485e-04 & 2.00 \\
		$56 \times 56$ & 3.090e-05 & 2.05 & 1.335e-02 & 1.01 & 1.092e-04 & 2.00 \\
		$64 \times 64$ & 2.342e-05 & 2.07 & 1.169e-02 & 0.99 & 8.356e-05 & 1.96 \\
		$72 \times 72$ & 1.830e-05 & 2.00 & 1.040e-02 & 1.00 & 6.605e-05 & 2.00 \\
		$80 \times 80$ & 1.464e-05 & 2.12 & 9.357e-03 & 1.00 & 5.350e-05 & 2.00 \\
		\bottomrule
	\end{tabular} 
	\caption{Numerical convergence orders} \label{tab:ex3-roc}
\end{table}

\begin{figure}[h!]
	\centering
	\includegraphics[width=\textwidth]{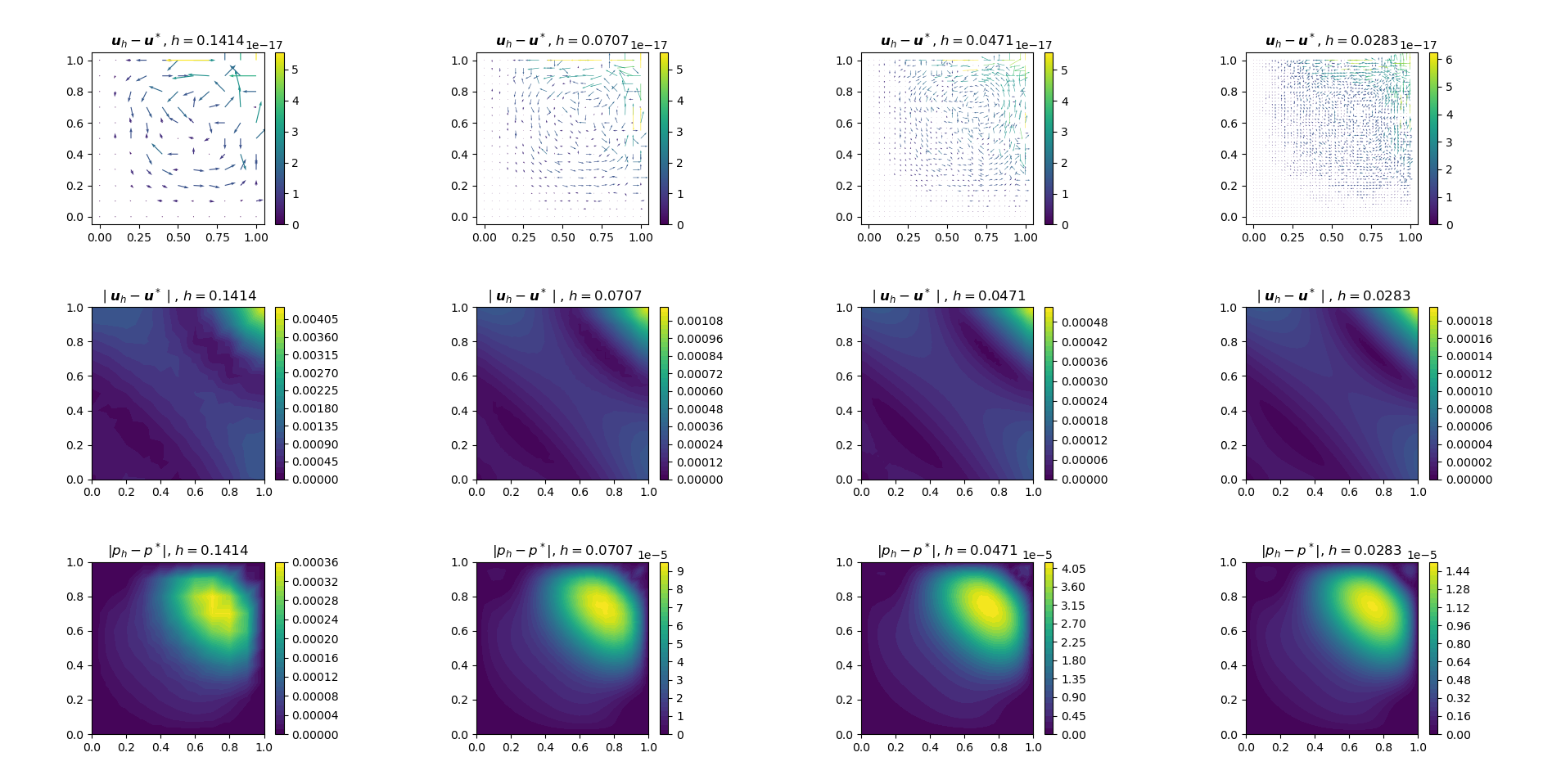}  
	\caption{Errors $\boldsymbol{u}_h - \boldsymbol{u}^*$, $|\boldsymbol{u}_h - \boldsymbol{u}^*|$, and $|p_h - p^*|$ for mesh with grid sizes $10\times 10,$ $20\times 20,$ $30\times 30,$ and $50\times 50,$}
	\label{fig:ex3-error-plot}
\end{figure}
	
\medskip
\noindent
\textbf{Acknowledgments:}  Support for M. T. Mohan's research received from the National Board of Higher Mathematics (NBHM), Department of Atomic Energy, Government of India (Project No. 02011/13/2025/NBHM(R.P)/R\&D II/1137). Wasim Akram is supported by NBHM (National Board of Higher Mathematics, Department of Atomic Energy) postdoctoral fellowship, No. 0204/16(1)(2)/2024/R\&D-II/10823.


\begin{thebibliography}	{99}	
		
		
		
		
		
		
		
		
		
		
		\bibitem{DNAFB} D. N. Arnold, F. Brezzi and M. Fortin, A stable finite element for the Stokes equations, \emph{Calcolo}, {\bf 21} (1984) 337--344.
		
		
		
	
	\bibitem{MBKB} M. Barboteu, K. Bartosz, W. Han and T.  Janiczko, Numerical analysis of a hyperbolic hemivariationalinequality arising in dynamic contact, \emph{SIAM J. Numer. Anal.}, {\bf 53}(2015), 527--550. 
	
	
	
		
		
		
		
		
		
		
		
		
		
		
		
		
		
		
		
		
		
		
		
		
		
		\bibitem{FBPH}  F. Browder and  P. Hess,  Nonlinear mappings of monotone type in Banach spaces, \emph{J. Funct. Anal.},    {\bf 11} (1972), 251--294 
		
		
		
		
		\bibitem{SCVKL}  S. Carl,  V. K. Le   and D. Motreanu, \emph{Nonsmooth Variational Problems and Their Inequalities: Comparison Principles and Applications},   Springer, New York, 2007.
		
		
		
		
		\bibitem{JCVTN} J. Cen, V. T.  Nguyen,  C. Vetro and S. Zeng, Weak solutions to the generalized Navier-Stokes equations with mixed boundary conditions and implicit obstacle constraints, \emph{Nonlinear Anal. Real World Appl.}, {\bf 73} (2023), Paper No. 103904, 18 pp.
		
		
		
		
		\bibitem{PGC} P. G. Ciarlet, \emph{Linear and Nonlinear Functional Analysis with Applications}, SIAM Philadelphia, Philadelphia (2013).	
		
		
		
		
		
		\bibitem{FHC1}	F. H. Clarke, Generalized gradients and applications, \emph{Trans. Am. Math. Soc.}, {\bf 205} (1975), 247--262.
		
		\bibitem{FHC}    F. H. Clarke, \emph{Optimization and Nonsmooth Analysis}, Second Edition, SIAM, Philadelphia, 1990.
		
		\bibitem{ZdSm1} Z. Denkowski, S. Mig\'orski, N.S. Papageorgiou, \emph{An Introduction to Nonlinear Analysis: Theory}, Kluwer Academic,   New York, 2003.
		
		\bibitem{ZdSmP} Z. Denkowski, S. Migórski and N. S. Papageorgiou, \emph{An Introduction to Nonlinear Analysis: Applications}, Kluwer Academic Publishers, Boston, MA, 2003. 
		
	\bibitem{JKDJK} 	J. K. Djoko and  J. Koko, Numerical methods for the Stokes and Navier–Stokes equations driven by threshold slip boundary conditions, \emph{Comput. Methods Appl.Mech. Engrg.}, {\bf  305} (2016), 936--958.
	
\bibitem{JKD} 	J. K. Djoko, A priori error analysis for Navier–Stokes equations with slip boundary conditions of friction type, \emph{J. Math. Fluid Mech.} {\bf 21}, Ar. No. 17 (2019). 
		
		
		
		\bibitem{CFKCWH} 	C. Fang, K. Czuprynski, W. Han, X. L. Cheng and X. Dai, Finite element method for a stationary Stokes hemivariational inequality with slip boundary condition, \emph{IMA J. Numer. Anal.}, {\bf 40} (2020), 2696--2716.
		
		
		
		\bibitem{CFWH}  C. Fang,  W. Han, S. Mig\'orski, M. Sofonea, 
		A class of hemivariational inequalities for nonstationary Navier-Stokes equations, 
		\emph{Nonlinear Anal. Real World Appl.}, {\bf 31} (2016), 257--276.
		
		
		
		
			\bibitem{FFWH}  F. Feng, W. Han and J. Huang,  Virtual element method for elliptic hemivariational inequalities, \emph{J. Sci. Comput.}, {\bf 81}(2019), 2388--2412.
		
		
		
		
		
		
		
		
		
	\bibitem{HFu} 	H. Fujita, Flow Problems with Unilateral Boundary Conditions. College de France, Lecons, 1993.
	
		\bibitem{HFu1} H. Fujita, A mathematical analysis of motions of viscous incompressible fluid under leak or slip boundary conditions, \emph{RIMS K\^oky\^uroku},  888 (1994), 199--216. 
		
		\bibitem{HFuHK}  H. Fujita, H. Kawarada, Variational inequalities for the Stokes equation with boundary conditions of friction type, \emph{GAKUTO Int. Ser. Math. Sci. Appl.}, {\bf 11} (1998), 15--33.
		
		
		
		
		
		
		
		 \bibitem{GGP1}    G. P.  Galdi, \emph{An Introduction to the Mathematical Theory of the Navier-Stokes Equations, Steady-state problems}, Second edition, Springer, New York, 2011.
		
		\bibitem{SGMTM} S. Gautam and M. T. Mohan, On the convective Brinkman-Forchheimer equations, \emph{Dyn. Partial Differ. Equ.}, 
		{\bf 22}(3) (2025),  191--233. 
		
		 \bibitem{SGKKMTM}  S. Gautam, K. Kinra and M. T. Mohan,  Feedback stabilization of convective Brinkman-Forchheimer extended Darcy equations, \emph{Appl. Math. Optim.}, {\bf  91} (2025), no. 1, Paper No. 25, 75 pp.
		
		\bibitem{VGPR}  V. Girault and P.-A. Raviart,  \emph{Finite Element Methods for Navier-Stokes Equations, Theory and algorithms}, Springer-Verlag, Berlin, 1986. 
		
		
		
		
		
		
		
		
		\bibitem{KWH} K. W. Hajduk and J. C. Robinson, Energy equality for the 3D critical convective Brinkman-Forchheimer equations, \emph{J. Differential Equations}, \textbf{263}(11) (2017), 7141--7161.
		
		
		\bibitem{JHMMPD} J. Haslinger, M. Miettinen, P. D. Panagiotopoulos, \emph{Finite Element Method for Hemivariational Inequalities: Theory, Methods and Applications}, Kluwer Academic Publishers, Dordrecht, Boston, London, 1999.
		
		
		\bibitem{JHRKVS} J. Haslinger, R. Ku\v{c}era,  and V. \v{S}\`{a}tek, Stokes system with local Coulomb's slip boundary conditions: analysis of discretized models and implementation, \emph{Comput. Math. Appl.}, {\bf 77} (2019), 1655--1667.
		
		
		\bibitem{WHKCFJ} W. Han, K. Czuprynski and F. Jing, 
		Mixed finite element method for a hemivariational inequality of stationary Navier-Stokes equations,
	\emph{J. Sci. Comput.}, {\bf 89}(1) (2021), Paper No. 8, 22 pp.
	
	
	\bibitem{WHFJYY}  	 W. Han, F. Jing and Y.  Yao, Stabilized low-order mixed finite element methods for a Navier-Stokes hemivariational inequality, \emph{BIT}, {\bf 63}(4) (2023),  Paper No. 46, 31 pp.
		
		\bibitem{WHSM1} W. Han, S. Mig\'orski, and M. Sofonea, A class of variational-hemivariational inequalities with applicationsto frictional contact problems, \emph{SIAM J. Math. Anal.}, {\bf 46} (2014), 3891--3912. 
		
		
		\bibitem{WHSM} W. Han, S. Mig\'orski and M. Sofonea, \emph{Advances in Variational and Hemivariational Inequalities: Theory, Numerical Analysis, and Applications}, Springer, Cham, 2015. 
		
		
		
		\bibitem{WHHQLM}  W. Han, H. Qiu and L. Mei, 
		On a Stokes hemivariational inequality for incompressible fluid flows with damping,
		\emph{Nonlinear Anal. Real World Appl.}, {\bf 79} (2024), Paper No. 104131, 16 pp.
		
		
		
		\bibitem{WHMS} W. Han and M. Sofonea, Numerical analysis of hemivariational inequalities in contact mechanics, \emph{Acta Numer.}, {\bf 28}(2019), 175--286. 
		
		\bibitem{WHMS1}  W. Han, M. Sofonea and M. Barboteu,  Numerical analysis of elliptic hemivariational inequalities, \emph{SIAMJ. Numer. Anal.}, {\bf 55}(2017), 640--663 (2017)
		
		\bibitem{WHMS2}  W. Han,  M. Sofonea and D. Danan, Numerical analysis of stationary variational-hemivariational inequalities, \emph{Numer. Math.}, {\bf 139}  (2018), 563--592. 
		
		\bibitem{WHCW} W. Han,  and C. Wang, Numerical analysis of a parabolic hemivariational inequality for semipermeablemedia, \emph{J. Comput. Appl. Math.}, {\bf 389} (2021), 113326.
		
	
		
		
	\bibitem{JSHNJW} 	J. S. Howell,  and  N. J. Walkington, 
		Inf-sup conditions for twofold saddle point problems, 
		\emph{Numer. Math.}, {\bf  118}(4) (2011),  663--693.
		
		
		
		
	
		
		
			\bibitem{FJWHWY}  F. Jing, W. Han, W. Yan and F. Wang, Discontinuous Galerkin methods for a stationary Navier-Stokes problem with a nonlinear slip boundary condition of friction type, \emph{J. Sci. Comput.}, {\bf  76} (2018), 888--912.
			
				\bibitem{FJWHYZ} F. Jing, W. Han, Y. Zhang and W. Yan, Analysis of an a posteriori error estimator for a variational inequality governed by the Stokes equations, \emph{J. Comput. Appl. Math.}, {\bf 372} (2020), 112721.
				
					\bibitem{FJWHGZ} F. Jing,  W. Han and G. Zhou, Numerical analysis of a class of hemivariational inequalities governed by fluid-fluid coupled flow, \emph{Nonlinear Anal. Real World Appl.}, {\bf 85} (2025), Paper No. 104366, 20 pp.
				
			
		\bibitem{FJJLZC}  	F. Jing,  J. Li, Z. Chen and W.  Yan, 
		Discontinuous Galerkin methods for the incompressible flow with nonlinear leak boundary conditions of friction type,	\emph{Appl. Math. Lett.}, {\bf 73} (2017), 113--119.
	
	

		
		
		
		
		\bibitem{PKa}  P. Kalita, Convergence of Rothe scheme for hemivariational inequalities of parabolic type, \emph{Int. J. Numer. Anal. Model.}, {\bf 10} (2013), 445--465.
		
		
		\bibitem{TKa}	T. Kashiwabara, On a finite element approximation of the Stokes equations under a slip boundary condition of the friction type, \emph{Jpn. J. Indust. Appl.Math.}, {\bf  30} (2013), 227--261.
		
	\bibitem{TKa1}	T. Kashiwabara, Finite element method for Stokes equations under leak boundary condition of friction type, \emph{SIAM J. Numer. Anal.}, {\bf 51} (2013), 2445--2469.
		
	\bibitem{NKJTO}	N. Kikuchi, and J. T.  Oden, Contact problems in elasticity: a study of variational inequalities and finite element methods, SIAM, Philadelphia, PA, 1988.
		
		
		 
		 
		  \bibitem{CLeR} C. Le Roux, Steady Stokes flows with threshold slip boundary conditions, \emph{Math. Models Methods Appl. Sci.}, {\bf  15} (2005), 1141--1168.
		  
		   \bibitem{CLeRAT} C. Le Roux and  A. Tani, Steady solutions of the Navier-Stokes equations with threshold slip boundary conditions, \emph{Math. Methods Appl. Sci.}, {\bf  30} (2007), 595--624.
		
		
		
		
		
		
		
		
		
		
		
		
		
		
		\bibitem{JLFJZC} J. Li, F. Jing,  Z. Chen and X. Lin, A priori and a posteriori estimates of stabilized mixed finite volume methods for the incompressible flow arising in arteriosclerosis, \emph{J. Comput. Appl. Math.}, {\bf 363} (2020), 35--52. 
		
	\bibitem{YLKL}  	Y. Li and  K. Li, Uzawa iteration method for Stokes type variational inequality of the second kind, \emph{Acta Math. App. Sin.}, {\bf 17} (2011) 303--316.
		
		
		\bibitem{YLRA} Y. Li and R. An, Semi-discrete stabilized finite element methods for Navier-Stokes equations with nonlinear slip boundary conditions based on regularization procedure, \emph{Numer. Math.}, {\bf 117} (2011), 1--36.
		
		\bibitem{YLRA1}  Y. Li and  R. An, Penalty finite element method for Navier-Stokes equations with nonlinear slip boundary conditions, \emph{Internat. J. Numer. Methods Fluids}, {\bf  69}(2012), 550--566.
		
		\bibitem{YLRA2} Y. Li and  R. An, Two-level variational multiscale finite element methods for Navier-Stokes type variational inequality problem, \emph{J. Comput. Appl. Math.}, {\bf 290} (2015), 656--669. 
		
		\bibitem{YLKL} Y. Li and K. Li, Penalty finite element method for Stokes problem with nonlinear slip boundary conditions, \emph{Appl. Math. Comput.}, {\bf 204} (2008), 216--226.
		
	
	
		
	\bibitem{MLWH}	M. Ling and W. Han, Minimization principle in study of a Stokes hemivariational inequality, \emph{Appl. Math. Lett.}, {\bf 121} (2021), 107401.
	
		\bibitem{MLWH1} M.	Ling, and W. Han, 
	Well-posedness analysis of a stationary Navier-Stokes hemivariational inequality,
	\emph{Fixed Point Theory Algorithms Sci. Eng.}, 2021, Paper No. 22, 14 pp.
	
	
\bibitem{MLWHSZ}	M. Ling, W. Han, and S. Zeng, A pressure projection stabilized mixed finite element method for a Stokes hemivariational inequality, \emph{J. Sci. Comput.}, {\bf 92}(1) (2022), Paper No. 13, 21 pp.
		
		
		\bibitem{MLFW}  M. Ling, F. Wang and W. Han, The nonconforming virtual element method for a stationary Stokes hemivariational inequality with slip boundary condition, \emph{J. Sci. Comput.}, {\bf 85}(2020), Ar. No. 56. 
		
		
	
		
		\bibitem{JLL}  J.-L. Lions. Quelques m\'ethodes de r\'esolution des problèmes aux limites non lin\'eaires,  Dunod, Paris, Gauthier-Villars, Paris, 1969.
		
		
		\bibitem{MTT} P. A. Markowich, E. S. Titi and S. Trabelsi, Continuous data assimilation for the three dimensional Brinkman-Forchheimer-extended Darcy model, \emph{Nonlinearity}, \textbf{29}(4) (2016), 1292--1328. 
		
		\bibitem{JMSS} J. L. Menaldi and S. S. Sritharan, Stochastic 2-D Navier-Stokes equation, \emph{Appl. Math. Optim.}, \textbf{46} (2002), no. 1, 31--53.
		
		\bibitem{SMi1}S. Mig\'orski, 
		A note on optimal control problem for a hemivariational inequality modeling fluid flow,
		\emph{Discrete Contin. Dyn. Syst.} 2013, Dynamical systems, differential equations and applications, 9th AIMS Conference. Suppl., 545--554.
		
		\bibitem{SMAO}  S. Mig\'orski, A. Ochal, Hemivariational inequalities for stationary Navier-Stokes equations, \emph{J. Math. Anal. Appl.}, {\bf 306} (2005), 197--217.
		
		\bibitem{SMAOMS}   S. Mig\'orski, A. Ochal, M. Sofonea, \emph{Nonlinear Inclusions and Hemivariational Inequalities. Models and Analysis of Contact Problems}, in: Advances in Mechanics and Mathematics, vol. 26, Springer, New York, 2013.
		
		\bibitem{SMi} S. Mig\'orski, A note on optimal control problem for a hemivariational inequality modeling fluid flow, \emph{Discrete Contin. Dyn. Syst.} (Supplement) (2013), 545--554.
		
		
		
		
		\bibitem{MT2}  M. T. Mohan, Well-posedness and asymptotic behavior of stochastic convective Brinkman–Forchheimer equations perturbed by pure jump noise, \emph{Stoch PDE: Anal. Comp.}, {\bf 10}(2) (2022), 614--690.
		
		
		
		
		
		
		\bibitem{ZNPDP} Z.  Naniewicz, and  P. D. Panagiotopoulos,  \emph{Mathematical Theory of Hemivariational Inequalities and Applications}, Dekker, New York, 1995. 
		
		\bibitem{DMPDP}  D. Motreanu and P. D. Panagiotopoulos, \emph{Minimax Theorems and Qualitative Properties of the Solutions of Hemivariational Inequalities and Applications}, Kluwer Academic Publishers, Boston, 1999.
		
		
		
			\bibitem{LNi}  L. Nirenberg,  On elliptic partial differential equations, \emph{Ann. Scuola Norm. Sup. Pisa Cl. Sci.} (3) 13 (1959), 115--162.
			
			
			
		\bibitem{PDP}  	P. D. Panagiotopoulos, Nonconvex energy functions, hemivariational inequalities and substationary principles, \emph{Acta Mech.}, {\bf 42} (1983), 160--183. 
		
		\bibitem{PDP1}   P. D.  Panagiotopoulos, \emph{Hemivariational Inequalities: Applications in Mechanics and Engineering}, Springer, Berlin, 1993. 
		
		
		
		
		
		\bibitem{HQRALM} H. Qiu, R. An,  L. Mei and C. Xue, Two-step algorithms for the stationary incompressible Navier-Stokes equations with friction boundary conditions, \emph{Appl. Numer. Math.}, {\bf 120} (2017), 97--114.  
		
		
	\bibitem{HQLM} 	H. Qiu and L. Mei, Two-grid MFEAs for the incompressible Stokes type variational inequality with damping, \emph{Comput. Math. Appl.}, {\bf 78} (2019) 2772--2788.
		
		
		
		
		
		
	
	
		\bibitem{TRo}  T. Roubicek, \emph{Nonlinear Partial Differential Equations with Applications}, Birkh\"auser Verlag, Basel, Boston, Berlin, 2005.
		
		
		\bibitem{NSa} N. Saito,  On the Stokes equation with the leak or slip boundary conditions of friction type: regularity ofsolutions, \emph{Publ. Res. Inst. Math. Sci.}, {\bf 40} (2004), 345--383. 
		
		
	
		
		
		
			\bibitem{MSSM} M. Sofonea, and  S. Mig\'orski,   \emph{Variational-Hemivariational Inequalities with Applications}, Chapman \& Hall/CRC Press, Boca Raton-London, 2018.
		
		
		\bibitem{FSa}  	F. Saidi, Non-Newtonian Stokes flow with frictional boundary conditions, \emph{Math. Model. Anal.}, {\bf  12} (2007) 483--495.
		
	
		
		
	
\bibitem{XTTC}  	X. Tan and T. Chen,  A mixed finite element approach for a variational-hemivariational inequality of incompressible Bingham fluids, \emph{J. Sci. Comput.}, 103 (1) (2025),  Paper No. 36, 22 pp.
		
		\bibitem{Te} R. Temam, \emph{Navier-Stokes Equations: Theory and Numerical Analysis,} North-Holland, Amsterdam, 1984.
		
		
		\bibitem{Te1}  R. Temam, \emph{Infinite-Dimensional Dynamical Systems in Mechanics and Physics}, Second edition, Vol. 68, Applied Mathematical Sciences, Springer, 1997.
		
		
		
		
		\bibitem{FWMLWH} F. Wang,  M. Ling,  W. Han and F. Jing, Adaptive discontinuous Galerkin methods for solving an incompressible Stokes flow problem with slip boundary condition of frictional type, \emph{J. Comput. Appl. Math.}, {\bf 371} (2020), 112700. 
		
		 \bibitem{QXXCKL}  Q. Xiao,  X. Cheng,  K. Liang and H. Xuan,  Numerical analysis of a variational-hemivariational inequality governed by the Stokes equations, \emph{Comput. Math. Appl.}, {\bf  159} (2024), 1–10.
		
	\bibitem{WXZH} 	W. Xu, Z. Huang, W. Han, W.  Chen and C. Wang, Numerical analysis of history-dependent variationalhemivariationalinequalities with applications in contact mechanics, \emph{J. Comput. Appl. Math.}, {\bf 351} (2019), 364--377.
		
		
		\bibitem{EZ} E. Zeidler, \emph{Nonlinear Functional Analysis and its Applications: II/B: Nonlinear monotone operators}, Springer-Verlag, 1990.
		
		
		
	\end{thebibliography}
\end{document}